\documentclass[11pt,reqno,a4paper]{amsart}
\usepackage[a4paper,textwidth=6.15in,textheight=9.05in,centering]{geometry}
\usepackage{amsmath,amssymb,amsthm,mathtools,mathrsfs}
\usepackage{microtype}
\usepackage[T1]{fontenc}
\usepackage{lmodern}
\usepackage{booktabs,array}
\usepackage{enumitem}
\usepackage[hidelinks]{hyperref}
\hypersetup{
 pdftitle={Nondegeneracy of the Sadovskii vortex},
 pdfauthor={Manuel del Pino, Monica Musso, Juncheng Wei}
}
\numberwithin{equation}{section}
\newtheorem{theorem}{Theorem}
\newtheorem{proposition}{Proposition}[section]
\newtheorem{lemma}[proposition]{Lemma}
\newtheorem{corollary}[proposition]{Corollary}
\theoremstyle{definition}

\theoremstyle{remark}

\newcommand{\R}{\mathbb R}

\newcommand{\dd}{\,\mathrm d}

\newcommand{\Ker}{\operatorname{Ker}}
\newcommand{\ind}{\operatorname{ind}}

\newcommand{\cF}{\mathcal F}

\allowdisplaybreaks[1]
\title[Nondegeneracy of the Sadovskii vortex]{Nondegeneracy of the Sadovskii vortex}
\author[M.~del Pino]{Manuel del Pino}
\author[M.~Musso]{Monica Musso}
\author[J.~Wei]{Juncheng Wei}
\date{}
\begin{document}
\begin{abstract}
 We prove nondegeneracy, modulo horizontal translation, of the normalized Sadovskii vortex patch. Building on the free-boundary and root-map framework developed by Huang and Tong, we first obtain a precise quantitative approximation of an exact Sadovskii profile, trapping its free boundary between two explicit barriers constructed around a rational approximation. This localization is then used to control the natural weighted quadratic form at the touching configuration. In the odd class, a ground-state identity shows that horizontal translation is the only zero mode. In the even class, dilation gives a negative direction, while a finite-rank comparison on a compact part of the boundary together with analytic contact-tail estimates yields a positive gap on a codimension-one subspace. Consequently, the Sadovskii dipole is locally rigid at the linearized level once translation is fixed. The computer-assisted part is confined to rigorous interval verification of a finite collection of explicit inequalities; the continuum, contact and operator reductions are proved analytically.
\end{abstract}
\maketitle

\section{Introduction}
A classical problem for the two-dimensional incompressible Euler equation is
to understand steadily translating vortex patches and the linearized
operators around them.  We write the vorticity equation in the form
\begin{equation}\label{eq:euler}
 \partial_t\omega+v\cdot\nabla\omega=0,
 \qquad v=\nabla^\perp\phi,
 \qquad -\Delta\phi=\omega\quad\hbox{in }\R^2,
\end{equation}
where $\nabla^\perp\phi=(\partial_2\phi,-\partial_1\phi)$.  The Sadovskii
dipole is formed by two contiguous regions of opposite constant vorticity.
We write
\begin{equation}\label{eq:patch}
 \omega_*={\bf1}_{\Omega_+}-{\bf1}_{\Omega_-},\qquad
 \Omega_- =\{(x_1,-x_2):(x_1,x_2)\in\Omega_+\},
\end{equation}
and assume that the upper component is symmetric with respect to the vertical
axis.  We fix the vorticity strengths to be $+1$ and $-1$, center the configuration,
and normalize the two endpoints of the free arc to $x_1=\pm1$.  The free arc
is then the graph of a positive even function $f$ on $(-1,1)$ with
$f(\pm1)=0$.  If the dipole translates with speed $W>0$, the relative stream
function is $\psi=\phi-Wx_2$, and the free-boundary equation is simply
\begin{equation}\label{eq:fb-condition}
 \psi=0\qquad\hbox{on the free arc }\Gamma.
\end{equation}

Sadovskii introduced this touching, steadily translating configuration in
\cite{Sadovskii}.  Counter-rotating vortex pairs are among the classical
coherent structures of two-dimensional ideal flow; for broad accounts of
their dynamics and instabilities we refer to Saffman~\cite{Saffman} and to the
review of Leweke, Le~Diz\`es and Williamson~\cite{LewekeLeDizesWilliamson}.
The mathematical theory of steady planar pairs has a long history.  Among the
classical existence results are the stream-function construction of
Norbury~\cite{Norbury}, the variational theory of Turkington~\cite{Turkington}
and Burton~\cite{Burton1988}, and the later stability theory for variational
vortex pairs~\cite{BurtonNussenzveigLopesLopesFilho,Burton2021}.  Perturbative
and desingularization approaches have produced local and global families of
translating and rotating vortex patches; see, for example,
\cite{HmidiMateu,HassainiaHmidi,CaoLaiZhan,HassainiaWheeler,GarciaHaziot}.
The Lamb dipole, another distinguished translating dipole, has a complementary
variational stability theory~\cite{AbeChoi}.

The Sadovskii vortex is the limiting touching member of this family.  Its
rigorous existence was established only recently, and by two rather different
methods.  Choi, Jeong and Sim~\cite{ChoiJeongSim} obtained a variational
construction based on energy maximization under an impulse constraint, while
Huang and Tong~\cite{HuangTong} developed a free-boundary fixed-point
construction.  A scaling-invariant variational framework connecting the patch
to smoother Sadovskii vortices, together with stability information, was
subsequently developed in~\cite{AbeChoiJeongSimWoo}.  Related uniqueness and
stability questions for concentrated traveling pairs have also been studied in
\cite{CaoQinZhanZou}.

Our existence step is deliberately localized and should be distinguished from
these global existence theories.  We use the root map introduced by
Huang--Tong, but prove here that a very explicit narrow order interval is
invariant under that map.  The barrier inequalities are quantitative and hold
on the whole free boundary, including the contact limit.  Schauder's theorem
then produces an exact normalized Sadovskii profile inside this explicit
neighbourhood. In this paper we give a precise quantitative description of the Sadovskii vortex, localizing an exact profile within an explicit band of size \(O(10^{-3})\) around a rational approximation. This localization is obtained using the basic mapping properties of the Huang–Tong root map, so our argument is not intended as an independent existence proof. We also do not claim that the fixed point obtained here coincides with the variational solution of Choi–Jeong–Sim or with any particular fixed point arising in the Huang–Tong construction, since no uniqueness theorem is presently available to identify these solutions.

The main question of this paper begins after existence.  A coherent vortex
may exist and still possess hidden infinitesimal deformations through nearby
equilibria.  Because horizontal translations preserve a traveling dipole, the
linearized free-boundary operator necessarily has a translation mode in its
kernel.  The natural nondegeneracy statement is that this is the entire
kernel.  Equivalently, after fixing the translation gauge, the linearized
problem is invertible in the natural weighted space.  This is a local rigidity
property: it does not imply global uniqueness, and by itself it is not a
nonlinear stability theorem.

Nondegeneracy is important independently of any particular application.  It
is the structural hypothesis that permits one to solve the linearized equation
and hence to use a coherent vortex as a building block in perturbative,
continuation and desingularization arguments.  Closely related mechanisms are
visible in the desingularization of nondegenerate point-vortex equilibria into
vortex patches~\cite{HassainiaWheeler}, in global constructions of hollow
vortices from nondegenerate configurations~\cite{ChenWalshWheeler}, and, more
directly at the level of a vortex patch itself, in the recent desingularization
theory of Radu and Stevenson~\cite{RaduStevenson}, where nondegeneracy of a
steady rotating patch is the hypothesis that makes the nearby construction
possible.  Perturbative, desingularization, and gluing constructions for Euler
vortices provide further background for this point of view; see, for example,
\cite{SmetsVanSchaftingen,ddmw,ddmps}.  Our geometry is substantially different
from those perturbative settings: the two components touch, the profile is not
explicit, and the coefficient of the local linearized term degenerates at the
contact points.

The theorem below gives more information than the absence of additional zero
modes.  In the odd class the quadratic form is nonnegative and its nullspace
is exactly the translation direction.  In the even class there is exactly one
negative direction, supplied by dilation, and no kernel; on a suitable
codimension-one subspace we obtain a quantitative positive gap.  Thus the
result describes the basic Morse structure of the Sadovskii dipole at the
linearized level.

There are two features of this problem that make the usual nondegeneracy
argument less direct.  First, the boundary of the vortex is not explicit.
One cannot therefore write the linearized operator around a known closed
form and estimate it afterwards.  Second, the coefficient of the local part
of the linearized boundary equation vanishes at the contact endpoints.  A
naive division by this coefficient produces an artificial singularity and
obscures the natural energy space.

The first issue is handled by localizing an exact solution rather than by
approximating the linearized operator at a merely numerical profile.  We use
the root map of Huang--Tong and construct two explicit graphs which are
respectively a lower and an upper barrier.  The barrier inequalities are
proved on the whole parameter interval, including the contact limit.  The
root map sends the order interval between the barriers into itself, and
Schauder's theorem then yields an exact free boundary in that interval.  The
role of the rational approximation is only to make this order interval
explicit and narrow enough for the later operator estimates.

Once an exact profile has been obtained, the existence construction is no
longer used.  The second half of the paper starts again from an arbitrary
exact profile in the same barrier class.  The coefficient which vanishes at
contact is incorporated into the Hilbert-space norm.  In this weighted space
the local part of the second variation becomes the identity, while the Green
interaction is a compact self-adjoint perturbation.  Translation produces an
odd zero mode.  A ground-state identity shows that there are no others in the
odd class.  In the even class, dilation produces a strictly negative
direction; a comparison with one fixed reference operator shows that there
can be no second nonpositive direction.  This gives the desired kernel
statement.

The quantitative part of the proof is deliberately kept separate from these
analytic arguments.  What must be checked numerically is finite: two barrier
signs, one finite-dimensional spectral inequality, two operator comparison
errors, a positive denominator, and a small number of contact-tail bounds.
Every numerical input needed to follow the proof is stated in the main text
before it is used.  The remaining exact data and the complete reproducibility
record belong to the accompanying computational supplement.

\subsection{Outline of the proof}

We describe the proof here from beginning to end before entering the detailed
estimates.  There are two separate parts.  First we prove that an exact
Sadovskii profile lies between two explicit barriers.  Then we prove
nondegeneracy for every exact profile between those barriers.

\paragraph{1. The profile and the barriers.}
We choose an explicit reference graph $\bar f$ and two explicit graphs
$f_-$ and $f_+$ with
\[
        f_-\le \bar f\le f_+.
\]
The class $\mathcal K$ consists of the even decreasing graphs lying between
$f_-$ and $f_+$.  The formulas for these three graphs are written immediately
below this outline.  In the main proof we work directly with the physical
graphs $\bar f$ and $f_\pm$; no additional unknown profile is introduced.

For each $f\in\mathcal K$, the region below its graph produces a stream
function $\phi_f$ and a travelling speed $c(f)$.  Keeping this region fixed,
we solve
\[
        \phi_f(x,y)-c(f)y=0
\]
for $y>0$ at each horizontal position $x$.  The resulting graph is denoted by
$\mathcal R(f)$.  Thus $f$ is an exact travelling vortex exactly when
\[
        \mathcal R(f)=f.
\]
To prove that $\mathcal R$ maps $\mathcal K$ into itself, it is enough to show
that the left-hand side above has opposite signs at $y=f_-(x)$ and
$y=f_+(x)$.  We rewrite those two sign conditions as the positivity of two
scalar functions $m_-(T)$ and $m_+(T)$.  Once their positivity is proved for
$0<T\le1$, Schauder's theorem gives an exact profile in $\mathcal K$.

The estimates for $m_\pm$ are divided according to the geometry.  Very close
to contact we first cancel the two large logarithmic terms analytically.  In
the middle part we obtain lower bounds at finitely many target values and use
a one-sided second-derivative estimate to fill the intervals between them.
Near the symmetry axis we use the physical horizontal coordinate, where the
formulas are regular.

\paragraph{2. The linearized equation.}
From this point on the construction of the exact profile is no longer used.
We fix any exact $f\in\mathcal K$.  If $h$ is a normal displacement of the
free boundary, differentiation of the boundary equation gives
\[
        L_fh=q_fh+S_fh,
        \qquad
        S_fh(X)=\int_{\Gamma_f}G_H(X,Y)h(Y)\,\dd s_Y,
\]
where $q_f=\partial_\nu\psi_f<0$ on the open free arc.  The coefficient $q_f$
tends to zero at the contact points.  We therefore use the norm
\[
        \|h\|_{\mathcal H_f}^2
        =\int_{\Gamma_f}(-q_f)h^2\,\dd s
\]
instead of dividing by $q_f$.

Put
\[
        a=(-q_f)^{1/2},\qquad u=ah.
\]
Then
\[
        \mathcal Q_f[h]=\langle(I-K)u,u\rangle,
\]
where
\[
        (Ku)(X)=\int_{\Gamma_f}
        \frac{G_H(X,Y)}{a(X)a(Y)}u(Y)\,\dd s_Y.
\]
The operator $K$ is compact and self-adjoint.  Hence $L_fh=0$ is equivalent,
in the completed form space, to
\[
        Ku=u.
\]
The problem is therefore to determine all eigenfunctions of $K$ with
eigenvalue $1$.

\paragraph{3. Odd and even perturbations.}
Horizontal translation gives the normal displacement $e_1\cdot\nu$.  It is
odd and satisfies $L_f(e_1\cdot\nu)=0$.  On the right half of the free
boundary this function is positive.  Using this positive solution in the
quadratic form gives an identity whose right-hand side is an integral of a
square with a positive kernel.  It follows that the quadratic form is
nonnegative on odd perturbations and vanishes only for horizontal
translation.

Dilation is even.  Differentiating the family of dilated vortices gives an
even displacement with strictly negative quadratic form.  Thus the even
part has at least one negative direction.  We must prove that after one
linear condition is imposed, every remaining even direction has positive
quadratic form.

\paragraph{4. The part of the boundary away from contact.}
Choose a fixed point $x_c<1$ and first restrict the even problem to
$0\le x\le x_c$.  On this compact part of the boundary the coefficient
$w_f=-\psi_{f,y}(x,f(x))$ stays positive, so there is no degeneracy from the
contact points.

We put all exact profiles in one fixed coordinate and compare three
operators:
\[
        A_f^{\rm core},\qquad
        A_{\bar f}^{\rm core},\qquad
        \mathsf M.
\]
The first is the integral operator for the exact profile $f$ on the compact
part of the boundary.  The second is the same operator for the explicit
reference graph $\bar f$.  The third acts on piecewise constant functions on
a fixed finite partition and is represented by a real symmetric matrix.

The matrix calculation shows that, after removing one direction, the
quadratic form of $\mathsf M$ is strictly below the critical level $1$.  We
then estimate directly the difference between $\mathsf M$ and
$A_{\bar f}^{\rm core}$, and the difference between
$A_{\bar f}^{\rm core}$ and $A_f^{\rm core}$.  Finally we prove a positive
lower bound for the denominator in the corresponding quotient.  These three
estimates transfer the matrix inequality to the exact profile.  Therefore,
after one linear condition is imposed on the compact part of the boundary,
\[
        \langle A_f^{\rm core}v,v\rangle
        \le \alpha\|v\|^2
\]
with a number $\alpha<1$ independent of the exact profile in the barrier
class.

The finite matrix is used only for this step.  We never approximate an
eigenfunction of the exact Sadovskii vortex numerically.

\paragraph{5. Restoring the contact regions.}
We next put back the parts of the boundary between $x_c$ and the two contact
points.  Near contact we use
\[
        t=-\log f(x).
\]
In this variable the Green kernel has a simple explicit upper bound.  From
it we obtain a bound for the interaction of two functions supported near
contact and a separate bound for the interaction between the compact part
and the contact part.

Writing an even function as
\[
        u=u_{\rm c}+u_{\rm t},
\]
where $u_{\rm c}$ is supported on the compact part and $u_{\rm t}$ near
contact, the three estimates give
\[
 \langle(I-K)u,u\rangle
 \ge (1-\alpha)\|u_{\rm c}\|^2
      -2\beta\|u_{\rm c}\|\,\|u_{\rm t}\|
      +(1-\gamma)\|u_{\rm t}\|^2,
\]
with $\alpha<1$ and $\gamma<1$.  Thus the full problem is reduced to checking
that the two-variable quadratic form on the right is positive.  The
quantitative estimates below give precisely this inequality.

Consequently the even quadratic form is positive after one linear condition
is imposed.  Since dilation already gives one strictly negative even
direction, the even kernel is trivial.  Combining this with the odd argument
gives
\[
        \Ker_{\rm form}L_f=\operatorname{span}\{e_1\cdot\nu\}.
\]

The numerical work is used only to certify the explicit inequalities needed
at the points indicated above.  The logical steps from those inequalities to
existence and nondegeneracy are contained in the main text.

\subsection{The profile enclosure}
For $|x|<1$, set
\begin{equation}\label{prof:eq:T-main}
T(x)=\frac{1}{1-\log(1-x^2)},\qquad
 \omega_0(x)=(1-x^2)\bigl(1-\log(1-x^2)\bigr).
\end{equation}
The coordinate $T$ equals one at $x=0$ and tends to zero at contact.  Let
$P_5,Q_5$ be the fixed degree-five polynomials and let $\rho$ be the fixed
piecewise-affine function listed with the exact data in
Section~\ref{app:profile-details}.  We define directly the physical reference
graph and the two barriers by
\begin{equation}\label{eq:intro-reference}
 \bar f(x)=\omega_0(x)\frac{P_5(T(x))}{Q_5(T(x))},\qquad
 f_\pm(x)=\bar f(x)\pm\omega_0(x)\rho(T(x)).
\end{equation}
Thus $\bar f$ is the graph used throughout the main argument; no separate
unknown function $g$ is needed.  We consider the closed convex class
\begin{equation}\label{prof:eq:Kset-main}
\mathcal K=\{f\in C([-1,1]):f\hbox{ is even and nonincreasing on }[0,1],
                              \ f_-\le f\le f_+\}.
\end{equation}

\medskip 
Our first main result gives a quantitative localization of an exact
Sadovskii profile.

\begin{proposition}[Quantitative localization]\label{prof:thm:main}
There exists an exact normalized Sadovskii profile $f_*\in\mathcal K$.
It is positive in $(-1,1)$, strictly decreasing on $(0,1)$, and satisfies
\begin{equation}\label{prof:eq:main-band-new}
 |f_*(x)-\bar f(x)|
 \le \omega_0(x)\rho(T(x)),\qquad
 \|f_*-\bar f\|_{L^\infty(-1,1)}
 \le 6\times10^{-3}.
\end{equation}
\end{proposition}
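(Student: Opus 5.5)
The proof will be a Schauder fixed-point argument for the Huang--Tong root map $\mathcal R$ on the closed convex set $\mathcal K$ of \eqref{prof:eq:Kset-main}, viewed as a subset of $C([-1,1])$. The steps are as follows.

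\emph{Step 1: well-definedness and qualitative properties of $\mathcal R$.} For every $f\in\mathcal K$ we form the patch $\omega_f$ (the region under the graph of $f$, reflected oddly in $x_2$), its stream function $\phi_f$, and the speed $c(f)$ given by the Huang--Tong normalization. From the basic mapping properties of their root map we take the following facts:
\begin{itemize}
\item for each $|x|<1$, the function $y\mapsto\phi_f(x,y)-c(f)y$ has a unique positive zero $\mathcal R(f)(x)$;
\item $\mathcal R(f)$ is even and nonincreasing on $[0,1]$, and vanishes at $\pm1$;
\item $\mathcal R$ is continuous on $\mathcal K$ in the sup norm;
\item $\mathcal R(\mathcal K)$ is equicontinuous, for instance through a uniform H\"older or log-Lipschitz bound coming from the log-Lipschitz regularity of $\phi_f$.
\end{itemize}
Equicontinuity and Arzel\`a--Ascoli make $\mathcal R(\mathcal K)$ precompact.

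\emph{Step 2: invariance $\mathcal R(\mathcal K)\subset\mathcal K$.} Monotonicity of the patch is already encoded in $\mathcal K$. So it suffices to show, for every $f\in\mathcal K$ and $|x|<1$,
\[
\phi_f(x,f_-(x))-c(f)f_-(x)>0>\phi_f(x,f_+(x))-c(f)f_+(x),
\]
which then traps the unique root between the barriers. Both sides are monotone in the patch, since $\phi_f$ increases as the upper patch grows and the lower patch shrinks. I would therefore bound them using the extremal configurations: patches bounded by $f_-$ and $f_+$, together with the corresponding bounds on $c(f)$. This reduces the inequalities to the positivity of two explicit scalar functions $m_\pm(T)$ on $0<T\le1$, written in the variable $T$ of \eqref{prof:eq:T-main}.

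\emph{Step 3: positivity of $m_\pm$, the main obstacle.} I would split $(0,1]$ into three regimes.
\begin{itemize}
\item \emph{Near contact} ($T\to0$). Both $\phi$ and $cy$ carry large logarithmic terms of size $\omega_0\log$, and these must cancel at leading order. I would expand the Green interaction near the contact point explicitly, cancel the logarithms analytically, and show that the remainder is positive to leading order because of the margin $\omega_0\rho$. This is the delicate step, since the signs are of size $\omega_0\rho(T)\to0$.
\item \emph{Middle range.} Use interval arithmetic at finitely many nodes, and fill the gaps between nodes with a one-sided bound on the second derivative.
\item \emph{Near $x=0$.} Switch to the physical coordinate $x$, where the formulas are smooth, and verify there directly.
\end{itemize}

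\emph{Step 4: conclusion.} Schauder's theorem now yields $f_*=\mathcal R(f_*)\in\mathcal K$, that is, an exact normalized profile satisfying the first bound in \eqref{prof:eq:main-band-new}. Positivity on $(-1,1)$ follows from $f_*\ge f_->0$, which is part of the barrier data. To get strict monotonicity, I would show that at a fixed point $\psi_y<0$ on the free arc away from contact (the Hopf lemma applied to $\psi$ in the upper half-plane), so that the implicit function theorem gives $f_*'<0$ on $(0,1)$. The $L^\infty$ bound $6\times10^{-3}$ then reduces to a certified one-variable bound $\sup_{0<T\le1}\omega_0\rho(T)\le6\times10^{-3}$. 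This follows from the piecewise-affine form of $\rho$ together with the elementary maximization of $\omega_0$.
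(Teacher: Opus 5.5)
\textbf{Gap in Step 2: the reduction to extremal patches fails.} Your overall architecture is the paper's: the Huang--Tong root map, invariance of $\mathcal K$ from the two barrier signs, Schauder, positivity of two scalar margins regime by regime, and the width bound $\sup\omega_0\rho=0.006$. The problem is the reduction in Step~2.

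The quantity $\Phi(f;x,y)=\phi_f(x,y)/y-c(f)$ is \emph{not} monotone in the patch. Both $\phi_f$ and $c(f)$ increase with $f$. Bounding $\phi_f$ below by $\phi_{f_-}$ and $c(f)$ above by $c(f_+)$ decouples the Green term from the speed term, and the resulting inequality is false near contact. For any $g\in\mathcal K$, continuity of $\nabla\phi_g$ and $\phi_g(x,0)=0$ give
\[
\phi_g(x,y)/y\to\partial_y\phi_g(1,0)=c(g)\qquad\text{as }(x,y)\to(1,0).
\]
Hence your decoupled lower margin $\phi_{f_-}(x,f_-(x))/f_-(x)-c(f_+)$ tends to $c(f_-)-c(f_+)<0$ as $x\to1$. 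The upper margin has the symmetric defect. Away from contact the decoupling still costs roughly $c(f_+)-c(f_-)$, which is far larger than the normalized margins ($>10^{-6}$) that the paper can certify. Expanding the Green interaction near contact in Step~3 cannot repair this, because the logarithms only cancel when $\phi$ and $c$ are evaluated on the same patch.

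\textbf{The missing idea: columnwise pairing.} Both $\phi_f(x,y)/y$ and $c(f)$ are integrals over vertical source columns. Each column's contribution depends only on its own height, and by evenness the columns at $\xi$ and $-\xi$ share one height. Therefore
\[
\Phi(f;1-d,y)=\int_0^1\cF\bigl(d,y;t,f(1-t)\bigr)\,\dd t
\]
exactly, with $\cF=V(t-d,y,b)+V(2-d-t,y,b)-C(t,b)-C(2-t,b)$.

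The margins must be defined by taking the minimum (resp.\ maximum) of this \emph{combined} integrand over $b\in[b_-(t),b_+(t)]$, separately for each $t$, as in \eqref{prof:eq:Eminus}--\eqref{prof:eq:mplus}. The sign of $\partial_b\cF$ in \eqref{prof:eq:Fb} changes with $t$: it is positive near the target column and negative for columns close to contact. So the optimal heights switch between the two barriers, and neither $f_-$ nor $f_+$ is an extremal configuration. Only with this pairing does the $1/T$ logarithm cancel at contact, leaving the normalized leading term \eqref{eq:c35-contact-leading}. Its sign is decided by the normalized barrier height $h_\sigma$.

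\textbf{A minor point on strict monotonicity.} Hopf's lemma would need boundary regularity you do not yet have. Your implicit-function route would also need the sign of $\psi_x$, not only of $\psi_y$. The root-map property that $\Phi$ is strictly decreasing in $y$ and in $x>0$ already makes $\mathcal R(f_*)=f_*$ strictly decreasing.
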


The pointwise bound in \eqref{prof:eq:main-band-new} retains the vanishing
contact scale. Near $|x|=1$, both barriers have height comparable to
$(1-|x|)\log(1/(1-|x|))$. Section~\ref{prof:sec:profile-enclosure} reduces the
proposition to two scalar inequalities for the exact root map of Huang--Tong.
Section~\ref{sec:profile-estimates} proves those inequalities.

\subsection{The linearized form and the main theorem}
Let $G_H$ be the half-plane Green function defined by \begin{equation}\label{prof:eq:GH}
 G_H((x,y),(\xi,\eta))=\frac1{4\pi}
 \log\frac{(x-\xi)^2+(y+\eta)^2}{(x-\xi)^2+(y-\eta)^2}.
\end{equation}
Fix an exact profile $f\in\mathcal K$, and write
\[
 \Omega_f=\{(x,y):-1<x<1,\ 0<y<f(x)\},\qquad
 \Gamma_f=\{(x,f(x)):-1<x<1\}.
\]
We denote by
\begin{equation}\label{prof:eq:phi}
 \phi_f(x,y)=\int_{-1}^1\int_0^{f(\xi)}G_H((x,y),(\xi,\eta))\,\dd\eta\dd\xi.
\end{equation}
the stream function generated by the upper patch. 
Let $c(f)>0$ denote the travelling speed associated with $f$,
\begin{equation}\label{prof:eq:speed}
 c(f)=\frac1{2\pi}\int_{-1}^1
              \log\left(1+\frac{f(\xi)^2}{(1-\xi)^2}\right)\,\dd\xi.
\end{equation}
We then set
\[
 \psi_f(x,y)=\phi_f(x,y)-c(f) \, y.
\]
Thus $\psi_f$ is the relative stream function in the moving frame, and the
free-boundary equation is
\[
 \psi_f=0\qquad\hbox{on }\Gamma_f.
\]
Since $\Omega_f$ is bounded and the gradient of the logarithmic kernel is
locally integrable, $\nabla\phi_f$ is continuous; in particular, $\psi_f$ is
$C^1$ up to the closed upper half-plane. A proof of this elementary
regularity, including the contact points, is given in
Section~\ref{prof:sec:profile-enclosure}.

As shown there, an exact profile is $C^1$ on $(-1,1)$ and
$\nabla\psi_f\ne0$ on the open free arc. Let $\nu$ be the outward unit normal
to $\Omega_f$ along $\Gamma_f$. We write
\begin{equation}\label{eq:qnegative}
 q_f=\partial_\nu\psi_f<0\qquad\hbox{on }\Gamma_f.
\end{equation}
For later use, let $e_1=(1,0)$.

For a normal displacement $h:\Gamma_f \to \R$, we define
\begin{equation}\label{eq:intro-form}
 \mathcal Q_f[h]=\int_{\Gamma_f}(-q_f)h^2\,\dd s
 -\iint_{\Gamma_f\times\Gamma_f}G_H(X,Y)h(X)h(Y)\,\dd s_X\dd s_Y.
\end{equation}
The natural space is
\begin{equation}\label{eq:intro-weighted-space}
 \mathcal H_f=\{h:(-q_f)^{1/2}h\in L^2(\Gamma_f)\},\qquad
 \|h\|_{\mathcal H_f}^2=\int_{\Gamma_f}(-q_f)h^2\,\dd s.
\end{equation}
The form extends continuously to this space. No zero trace is imposed at the
contact points. We denote by $\mathcal H_{f,e}$ and $\mathcal H_{f,o}$ the
closed even and odd subspaces of $\mathcal H_f$ under reflection in the
vertical axis.

\medskip
Our second main estimate is the following.

\begin{proposition}[Even coercivity]\label{prop:principal-coercivity}
For every exact Sadovskii profile
$f\in\mathcal K$, there is a closed
codimension-one subspace $X_f$ of the even space $\mathcal H_{f,e}$ such that
\begin{equation}\label{eq:principal-coercivity}
 \mathcal Q_f[h]\ge\frac1{250}\|h\|_{\mathcal H_f}^2,
 \qquad h\in X_f.
\end{equation}
\end{proposition}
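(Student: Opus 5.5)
We follow the reduction described in the outline, working throughout in the unitary variable $u=ah$ with $a=(-q_f)^{1/2}$. In this variable $\mathcal Q_f[h]=\langle(I-K)u,u\rangle_{L^2(\Gamma_f)}$ and $\|h\|_{\mathcal H_f}=\|u\|_{L^2}$. The claim is therefore that $\langle Ku,u\rangle\le(1-\tfrac1{250})\|u\|^2$ on a closed codimension-one subspace of even $u$.

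\emph{Step 1: splitting the boundary.} We fix $x_c<1$ and split an even $u$ as $u=u_{\rm c}+u_{\rm t}$, where $u_{\rm c}=u\,\mathbf 1_{\{|x|\le x_c\}}$ and $u_{\rm t}=u\,\mathbf 1_{\{|x|>x_c\}}$. The two pieces are orthogonal, so $\|u\|^2=\|u_{\rm c}\|^2+\|u_{\rm t}\|^2$. Since $G_H>0$, we have
\[
\langle Ku,u\rangle=\langle Ku_{\rm c},u_{\rm c}\rangle+2\langle Ku_{\rm c},u_{\rm t}\rangle+\langle Ku_{\rm t},u_{\rm t}\rangle .
\]

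\emph{Step 2: the core estimate.} For the first term we write the core operator $A^{\rm core}_f$ in a fixed reference coordinate, namely the parameter $x\in[0,x_c]$ after even symmetrization. We then compare it in operator norm with $A^{\rm core}_{\bar f}$ and with the finite matrix operator $\mathsf M$ on piecewise constants. The matrix computation gives a vector $\mathbf e$ with $\langle\mathsf M v,v\rangle\le\alpha_0\|v\|^2$ for $v\perp\mathbf e$.

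The difference $A^{\rm core}_{\bar f}-A^{\rm core}_f$ is controlled by the barrier width $\omega_0\rho$. To see this, we use the Lipschitz dependence of $G_H$, of $ds$ and of the weight $w_f=-\partial_y\psi_f$ on the graph, together with the lower bound on $w_f$ on $[0,x_c]$ uniformly for $f\in\mathcal K$.

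We then define $X_f$ as the set of even $u$ with $\langle u_{\rm c},\tilde{\mathbf e}\rangle=0$, where $\tilde{\mathbf e}$ is the pullback of $\mathbf e$. This condition is a bounded linear functional on $\mathcal H_{f,e}$, so $X_f$ is closed and has codimension one. On $X_f$ the core estimate gives $\langle A_f^{\rm core}u_{\rm c},u_{\rm c}\rangle\le\alpha\|u_{\rm c}\|^2$ with $\alpha=\alpha_0+\varepsilon_1+\varepsilon_2$. The denominator estimate is needed here to convert between the weighted norm and the reference $L^2$ norm.

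\emph{Step 3: the contact tails.} In the variable $t=-\log f(x)$ near contact, the geometry of $\mathcal K$ gives $f\sim(1-x)\log\frac1{1-x}$. The weight $-q_f$ vanishes at a comparable rate, which we derive from the stream-function regularity of Section~\ref{prof:sec:profile-enclosure} and the barriers.

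We bound the kernel $G_H(X,Y)/(a(X)a(Y))$ from above by an explicit kernel in $(t,t')$. A Schur test then gives $\langle Ku_{\rm t},u_{\rm t}\rangle\le\gamma\|u_{\rm t}\|^2$. A Hilbert--Schmidt estimate gives $|\langle Ku_{\rm c},u_{\rm t}\rangle|\le\beta\|u_{\rm c}\|\|u_{\rm t}\|$.

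\emph{Step 4: conclusion.} Combining the three bounds,
\[
\langle(I-K)u,u\rangle\ge(1-\alpha)\|u_{\rm c}\|^2-2\beta\|u_{\rm c}\|\|u_{\rm t}\|+(1-\gamma)\|u_{\rm t}\|^2 .
\]
The smallest eigenvalue of the $2\times2$ matrix $\begin{pmatrix}1-\alpha&-\beta\\-\beta&1-\gamma\end{pmatrix}$ is a lower bound for the right-hand side divided by $\|u\|^2$. We then check that the certified numerical values of $\alpha,\beta,\gamma$ make this eigenvalue at least $1/250$. The estimate is stated for smooth compactly supported $h$ and extends to all of $X_f$ by density and continuity of the form.

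The main obstacle will be the contact-tail bound $\gamma<1$ with enough margin left over. The weight degenerates there and the logarithmic singularity of $G_H$ at the touching point interacts with its reflection. To handle this, we would first cancel the leading logarithms analytically, exactly as in the barrier estimates for $m_\pm$. Only the residual kernel would then be bounded, in the $t$-variable, by a Hardy-type Schur weight $e^{-t/2}$.
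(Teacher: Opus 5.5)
Your architecture matches the paper's: a core/tail split at a physical point, one orthogonality condition on the core from the top eigenvector of the finite matrix, the comparisons $\mathsf M\to A^{\rm core}_{\bar f}\to A^{\rm core}_f$, and the $2\times2$ block with $\delta=1/250$. But Step~3 has a genuine gap, and it is where the real content lies.

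\textbf{The tail estimate.} A vanishing rate for $-q_f$ cannot come from the two sources you name. The regularity of Section~\ref{prof:sec:profile-enclosure} only gives $\nabla\psi_f\to0$, and the barriers control heights, not $\nabla\psi_f$ or $f'$. Moreover, the tail kernel does not need a rate in $x$; it needs the Jacobi density in logarithmic height. Its Jacobian $\dd x/\dd t=-f/f'$ involves the slope of the exact profile, which $\mathcal K$ does not control.

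The missing idea is the identity \eqref{eq:contact-density-identity}, $\dd\mu_f/\dd t=f/(-\psi_x)=1/(-\Phi_x(f;X_f(t)))$. It uses the exact boundary equation $\psi_x=-f'\psi_y$, so the slope cancels. It is combined with the quantitative bound \eqref{eq:contact-density-input}, $-\Phi_x\ge0.82\,t/\pi$ for $t\ge9$. That bound is proved from the positive representation \eqref{eq:contact-positive-log} of $-\phi_x$, by interval enclosures on $[9,100]$ and \eqref{eq:far-density-analytic} beyond.

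In particular $-q_f\ge-\psi_x\ge0.82\,f\,t/\pi$, so $-q_f$ is \emph{not} comparable to $f$. It is larger by the factor $t=-\log f$, and this logarithmic gain is exactly what makes the tail small. Were $-q_f\le Cf$, you would get $\dd\mu_f/\dd t\ge1/C$. The same-contact block, whose kernel is essentially $\frac1{2\pi}\log\coth\frac{|t-s|}2$, would then have norm of order $\pi/(4C)$ however deep the cut. The same density bound is also what makes the adjacent core--tail coupling \eqref{eq:coupling-adjacent-bound} small.

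With the density bound, you simply drop the horizontal separation in $G_H$ to get the majorant $K_0/c_*$ of \eqref{eq:model-contact-kernel-explained}. Schur's test then gives $\|C_{\rm same}\|\le\pi^2/(4c_*S_-)\approx0.188$. There is no leading logarithm to cancel, since the speed term does not enter $K$. The opposite-contact interaction is a rank-one term of size $O(e^{-2S_-})$.

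The Schur weight is not cosmetic. The determinant in \eqref{eq:c36-coercivity-determinant} is only about $4.6\times10^{-5}$, so with the paper's $\alpha,\beta$ the constant $\gamma$ may grow by less than $0.007$. An $e^{-t/2}$ weight replaces $\int_\R\ell(|r|)\dd r=\pi^2/2$ by $\int_\R\ell(|r|)e^{-r/2}\dd r=2\pi$, which gives $\gamma\approx0.24$.

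\textbf{The core comparison.} Step~2 also relies on estimates you do not have.
\begin{itemize}
\item The arclength element $\dd s=\sqrt{1+f'^2}\,\dd x$ involves the uncontrolled slope.
\item $G_H(X_f(x),X_f(\xi))$ is not Lipschitz in the heights near the diagonal; only the integrable majorant $\log(1+D/|x-\xi|^2)$ is available.
\item Comparing $w_f$ with a weight attached to $\bar f$ would need a two-sided estimate that is neither available nor meaningful, since $\bar f$ is not an exact profile.
\end{itemize}
The paper sidesteps all three. In the vertical variable, $h\,\dd s=\eta\,\dd x$ removes the slope. It then uses the fixed reference density $p=\bar f\,\gamma(T)$, with $u=\int\dd x/p$ and $\varphi=p\eta$. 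The numerator of \eqref{eq:c15-rayleigh} then depends on $f$ only through heights. All dependence on $w_f$ sits in the denominator $\int r_f\varphi^2\,\dd u$, which needs only the one-sided bound $r_f=f(-\Phi_y)/p\ge r_0$ from complete-column positivity. Hence $\alpha=(\lambda_*+\varepsilon_{\rm proj}+\varepsilon_{\rm geom})/r_0$, not a sum.

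This relative bound is essential. The tail estimate forces $S_c\approx16$, so the core reaches $1-x_c<6\times10^{-9}$. There $w_f$ has no useful absolute lower bound, which is why the matrix is built in the coordinate $u$ rather than in $x$.
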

The codimension-one space $X_f$ is defined by imposing a single condition
on the compact core, coming from the finite-rank comparison. Once this
condition is imposed, the tail component is arbitrary. In particular,
$X_f$ is not defined as the orthogonal complement of the dilation mode,
and in general the two subspaces need not coincide.
We construct it in Sections~\ref{sec:operator-comparison} and
\ref{sec:core-tail-abstract}.

\medskip
Let
\begin{equation}\label{eq:intro-L-simple}
 L_fh=q_fh+S_fh,
 \qquad S_fh(X)=\int_{\Gamma_f}G_H(X,Y)h(Y)\,\dd s_Y, \quad X \in \Gamma_f.
\end{equation}
This is the shape derivative of \eqref{eq:fb-condition} at fixed travelling
speed. Its form realization is defined in Section~\ref{sec:selfadjoint}. We
write $\Ker_{\rm form}L_f$ for the kernel of the bilinear form associated
with $\mathcal Q_f$; its precise completed-space definition is given there.

\medskip
The two propositions above lead to the main theorem.
\begin{theorem}\label{thm:main}
An exact normalized Sadovskii profile exists in $\mathcal K$.
For every such profile, the moving-contact form kernel of $L_f$ is
\begin{equation}\label{eq:main-kernel}
 \Ker_{\rm form}L_f=\operatorname{span}\{e_1\cdot\nu\}.
\end{equation}
The quadratic form is nonnegative on the odd space, with equality only
in the translation direction. On the even space it has exactly one
negative direction and no kernel.
\end{theorem}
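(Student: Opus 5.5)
Existence is Proposition~\ref{prof:thm:main}, so the work is the kernel and Morse statements for an arbitrary exact $f\in\mathcal K$. I will first set up the weighted framework. With $a=(-q_f)^{1/2}$ and $u=ah$, the map $h\mapsto u$ is an isometry from $\mathcal H_f$ onto $L^2(\Gamma_f)$. Under it,
\[
\mathcal Q_f[h]=\langle (I-K)u,u\rangle,
\]
where $K$ is compact and self-adjoint; this is the form realization of Section~\ref{sec:selfadjoint}. Hence $\Ker_{\rm form}L_f$ corresponds to $\Ker(I-K)$. Since $f$ is even and the kernel is reflection invariant, $K$ commutes with reflection. The kernel therefore splits into an odd part and an even part, and the Morse index can be counted separately on $\mathcal H_{f,o}$ and $\mathcal H_{f,e}$.

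\textbf{Odd class.} Differentiating $\psi_{f(\cdot-\tau)}=0$ at fixed speed gives $L_f(e_1\cdot\nu)=0$. I will check that $e_1\cdot\nu$ lies in $\mathcal H_f$: $q_f\to0$ at the contact points, and $\nu$ is bounded. Since $f$ is strictly decreasing on $(0,1)$, $\varphi:=e_1\cdot\nu>0$ on the right half-arc.

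For odd $h$, write $h=\varphi g$ and fold the integrals onto the right half $\Gamma_f^+$. The odd Green kernel
\[
G_H^{o}(X,Y)=G_H(X,Y)-G_H(X,\bar Y)
\]
is positive there, because $Y$ is closer to $X$ than its mirror image $\bar Y$ is. Using $-q_f\varphi=S_f\varphi$, the Picone/ground-state identity gives
\[
\mathcal Q_f[h]=\tfrac12\iint_{\Gamma_f^+\times\Gamma_f^+}G_H^{o}(X,Y)\varphi(X)\varphi(Y)\bigl(g(X)-g(Y)\bigr)^2\,\dd s_X\dd s_Y\ \ge0 .
\]
The right side vanishes only when $g$ is constant. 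I will justify the identity on a dense class and pass to the limit by lower semicontinuity. This shows that the odd form is nonnegative with null direction exactly $\operatorname{span}\{\varphi\}$.

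\textbf{Even class.} I will take the dilation family $\lambda\Omega_f$, which consists of exact solutions with speed scaling like $\lambda$. Differentiating at $\lambda=1$ gives $h_d=X\cdot\nu$, which satisfies $L_fh_d=-\partial_\lambda c\,y=-c(f)\,y$ on $\Gamma_f$. Then
\[
\mathcal Q_f[h_d]=-\langle L_fh_d,h_d\rangle=-c(f)\int_{\Gamma_f} y\,(X\cdot\nu)\,\dd s,
\]
up to the sign convention. This equals $-c(f)$ times the flux of $y\,X$ through the arc, which is computable via the divergence theorem as $-3c(f)|\Omega_f|<0$ plus axis terms that vanish since $y=0$ there. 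So the dilation direction is strictly negative. I expect checking the exact signs and the boundary terms at contact to need care.

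Now Proposition~\ref{prop:principal-coercivity} supplies a codimension-one $X_f\subset\mathcal H_{f,e}$ on which $\mathcal Q_f\ge\frac1{250}\|h\|^2$. Any negative or null even subspace meets $X_f$ trivially, so it has dimension at most one. The dilation direction shows the count is exactly one negative direction. There is also no even kernel: if $k\ne0$ were an even kernel element, the span of $h_d$ and $k$ would be two-dimensional, because $\mathcal Q_f[h_d]<0=\mathcal Q_f[k]$ rules out $k$ being a multiple of $h_d$. On this span, $\mathcal Q_f[\alpha h_d+\beta k]=\alpha^2\mathcal Q_f[h_d]\le0$, since $k$ is form-orthogonal to everything. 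That gives a two-dimensional nonpositive subspace, which must meet $X_f$ nontrivially, a contradiction.

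Combining the odd and even parts yields \eqref{eq:main-kernel}. I expect the main obstacle to be the rigor of the dilation computation and of the odd ground-state identity at the contact points, where $q_f$ degenerates and one must confirm that $h_d$ and $\varphi$ belong to $\mathcal H_f$.
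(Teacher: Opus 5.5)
Your proposal follows the paper's proof of Theorem~\ref{thm:main} step by step. Existence comes from Proposition~\ref{prof:thm:main}. The odd sector is handled by the ground-state identity built on $e_1\cdot\nu>0$ and positivity of the reduced odd kernel. Dilation gives a strictly negative even direction, and Proposition~\ref{prop:principal-coercivity} limits any nonpositive even subspace to dimension one. Your explicit two-dimensional span $\{h_d,k\}$ is a clean way to exclude an even kernel. Your reason for $G_H(X,Y)>G_H(X,\bar Y)$ is also valid once written out: $G_H(X,Y)=\frac1{4\pi}\log\bigl(1+4x_2y_2/|X-Y|^2\bigr)$, and the mirror keeps the heights while increasing the distance. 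The paper reaches the same conclusion by expanding the cross-products to get $16xyzs>0$.

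One step must be corrected, because the whole even Morse count depends on it: the sign in the dilation identity. The full linearization of $\phi-Wx_2=0$ is $Lh-\dot W x_2$, by \eqref{eq:full-linearization}. With $h=X\cdot\nu$ and $\dot W=W$ this gives $L_f(X\cdot\nu)=+c(f)\,x_2$, as in \eqref{eq:scale-identity}, not $-c(f)\,y$. With your sign, $\mathcal Q_f[h_d]=-\langle L_fh_d,h_d\rangle$ would be positive and you would lose the negative direction. With the correct sign, $\mathcal Q_f[h_d]=-c(f)\int_{\Gamma_f}x_2\,(X\cdot\nu)\,\dd s$, so you should derive this rather than appeal to a sign convention. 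Also, $\operatorname{div}(x_2X)=3x_2$ in two dimensions, so this quantity equals $-3c(f)\int_{\Omega_f}x_2\,\dd X$, which is minus three times $c(f)$ times the impulse, not $-3c(f)|\Omega_f|$. The negative sign is unaffected.
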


The assertion is for the band \eqref{prof:eq:Kset-main} and the weighted
space \eqref{eq:intro-weighted-space}. It does not assert global uniqueness
of Sadovskii dipoles or dynamical stability. On the even fixed-impulse tangent
space the form is strictly positive; this follows from the dilation identity
and is proved in Section~\ref{sec:proof-main-v40}.

\subsection{How the estimates fit together}
The first quantitative statement is Proposition~\ref{prof:thm:main}.  Its
proof is not a convergence argument for a numerical approximation.  The
computer-assisted estimates establish two strict barrier signs for the exact
root map, and the fixed-point theorem then produces an exact Euler patch.
The contact calculation, the transition region and the regular region are
treated differently because the most efficient analytic coordinates are
different in those three places.  The resulting pieces meet on closed
intervals and cover the full parameter range.

For the linearized equation write $a=(-q_f)^{1/2}$ and $u=ah$.  The quadratic
form becomes
\[
 \mathcal Q_f[h]=\langle(I-K)u,u\rangle,
\]
where
\begin{equation}\label{eq:intro-K-simple}
 Ku(X)=\int_{\Gamma_f}\frac{G_H(X,Y)}{a(X)a(Y)}u(Y)\,\dd s_Y
\end{equation}
is compact and self-adjoint.  Horizontal translation satisfies
$L_f(e_1\cdot\nu)=0$ and yields the positive ground state on the odd half-arc.
Dilation satisfies
\[
 L_f(X\cdot\nu)=c(f) \, x_2,
 \qquad
 \mathcal Q_f[X\cdot\nu]=-3c(f) \, \int_{\Omega_f}x_2\,\dd X<0,
\]
and therefore provides the negative even direction.

It remains to exclude a second nonpositive even direction.  This is the only
place where a finite-dimensional comparison is used.  On a compact core the
exact operator is compared with a fixed finite-rank operator; the comparison
is then completed by analytic estimates on the contact tails and on the
core--tail coupling.  Proposition~\ref{prop:principal-coercivity} is the
resulting codimension-one gap.  The numerical values needed for this last step are stated in the main text at the
point where they first enter the argument. Their rigorous interval verification is
summarized briefly at the end of the paper and documented in full in the
computational supplement.

\section{The profile equation and its barriers}
\label{prof:sec:profile-enclosure}
\subsection{The root map}\label{sec:graph-level-map-v39}

We first describe precisely the fixed-point map used in the existence argument.
Let $f\in\mathcal K$ be any admissible graph. At this stage $f$ is only an
input graph; it is not assumed to satisfy the free-boundary equation. It
defines the upper patch
\[
 \Omega_f=\{(x,y):-1<x<1,\ 0<y<f(x)\},
\]
and hence, through \eqref{prof:eq:phi}, the stream function $\phi_f$. Its
travelling speed is the number $c(f)$ in \eqref{prof:eq:speed}, and we set
\[
        \psi_f(x,y)=\phi_f(x,y)-c(f)y.
\]
For each fixed $x\in(-1,1)$ we now keep the patch $\Omega_f$ fixed and look
along the vertical line above $x$ for a positive zero of $\psi_f$. Since
$y>0$, this is equivalent to solving
\begin{equation}\label{prof:eq:Phi}
 \Phi(f;x,y):=\frac{\phi_f(x,y)}{y}-c(f)=0.
\end{equation}
For the admissible graphs considered here this equation has a unique positive
root. We denote it by
\[
        y=\mathcal R(f)(x).
\]
Thus $f$ and $\mathcal R(f)$ have different roles: $f$ determines the vortex
patch and the velocity field, while $\mathcal R(f)$ is the graph obtained by
taking the zero level set of the corresponding relative stream function.

An exact travelling vortex is precisely a fixed point of this map. Indeed,
\[
        \mathcal R(f)=f
\]
is equivalent to
\[
        \psi_f(x,f(x))=0,\qquad -1<x<1,
\]
which is exactly the free-boundary condition \eqref{eq:fb-condition} on
$\Gamma_f$. Hence our existence problem is reduced to finding a fixed point
of $\mathcal R$.

The only general properties of the Huang--Tong root map that we use are the
following. For every admissible input graph, $\Phi$ is strictly decreasing in
$y$; for $x>0$ it is also strictly decreasing in $x$. Consequently the root is
unique, and $\mathcal R(f)$ is even and nonincreasing. Moreover, on any class
with a common lower contact bound and a common upper height bound, the maps
$\mathcal R(f)$ have a uniform $C^{1/2}$ bound and $\mathcal R$ is continuous
in the uniform norm. These properties are proved in
Huang--Tong~\cite[Lemma 3.2, Proposition 3.3 and Propositions 4.9, 4.11]{HuangTong}.

For our explicit barriers, the exact polynomial estimates give
\begin{equation}\label{eq:c35-class-bounds}
 0.04(1-|x|)\le f_-(x)\le f(x)\le f_+(x)\le2,
 \qquad f\in\mathcal K.
\end{equation}
The barriers are continuous and strictly decreasing on $(0,1)$. Therefore
$\mathcal K$ is a nonempty closed convex subset of the even subspace of
$C([-1,1])$. The bounds above give the hypotheses just stated for the root
map, so $\mathcal R$ is continuous on $\mathcal K$ and
$\mathcal R(\mathcal K)$ is relatively compact.

\begin{proposition}[Barrier criterion]\label{prof:prop:abstract-barrier}
Suppose that, for every $f\in\mathcal K$ and $|x|<1$,
\begin{equation}\label{prof:eq:barrier-lower-sign}
 \Phi(f;x,f_-(x))>0,\qquad \Phi(f;x,f_+(x))<0.
\end{equation}
Then $\mathcal R(\mathcal K)\subset\mathcal K$, and $\mathcal R$ has a fixed
point in $\mathcal K$.
\end{proposition}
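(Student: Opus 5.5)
The plan is to use the monotonicity of $\Phi$ in $y$ to turn the sign conditions \eqref{prof:eq:barrier-lower-sign} into the inclusion $\mathcal R(\mathcal K)\subset\mathcal K$, and then to apply Schauder's fixed point theorem on the closed convex set $\mathcal K$.

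Fix $f\in\mathcal K$ and $|x|<1$. Since $\Phi(f;x,\cdot)$ is continuous and strictly decreasing on $(0,\infty)$, and its unique zero is $\mathcal R(f)(x)$, I argue as follows.
\begin{itemize}
\item From $\Phi(f;x,f_-(x))>0$ we get $\mathcal R(f)(x)>f_-(x)$. If the zero lay at or below $f_-(x)$, strict decrease would make $\Phi(f;x,f_-(x))\le0$.
\item Symmetrically, from $\Phi(f;x,f_+(x))<0$ we get $\mathcal R(f)(x)<f_+(x)$.
\end{itemize}
This gives $f_-<\mathcal R(f)<f_+$ on $(-1,1)$. At the endpoints $x=\pm1$ both barriers vanish, and $\mathcal R(f)$ is continuous with value $0$ there, either by the uniform $C^{1/2}$ bound and the squeeze, or by setting $\mathcal R(f)(\pm1)=0$. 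So the inequalities extend to $[-1,1]$. The quoted Huang--Tong properties say that $\mathcal R(f)$ is even and nonincreasing on $[0,1]$. Hence $\mathcal R(f)\in\mathcal K$.

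For the fixed point, $\mathcal K$ is nonempty: it contains $\bar f$, because the barriers are strictly decreasing and even. It is also closed and convex in $C([-1,1])$, since evenness, monotonicity and pointwise bounds are all preserved under convex combinations and uniform limits. By \eqref{eq:c35-class-bounds}, $\mathcal K$ satisfies the common lower contact bound and the common height bound. The cited results then give continuity of $\mathcal R$ on $\mathcal K$ in the uniform norm, and a uniform $C^{1/2}$ bound on $\mathcal R(\mathcal K)$. By Arzel\`a--Ascoli, $\mathcal R(\mathcal K)$ is relatively compact. Schauder's theorem, in the form for a continuous self-map of a closed convex set with relatively compact image, then gives $f_*\in\mathcal K$ with $\mathcal R(f_*)=f_*$.

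The main point needing care is the endpoint behaviour and the meaning of the root near contact. One must check that the uniform $C^{1/2}$ bound, together with $f_\pm(\pm1)=0$, forces $\mathcal R(f)$ to extend continuously by zero. One must also check that it stays within the barriers at $|x|=1$, so that the image lies in $C([-1,1])$ and not merely in continuous functions on the open interval. Beyond this, the argument is a direct combination of monotonicity with the compactness properties already quoted.
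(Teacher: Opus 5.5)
Your proposal is correct and follows the paper's own proof. Strict decrease of $\Phi(f;x,\cdot)$ traps the unique root between $f_-(x)$ and $f_+(x)$, which gives $\mathcal R(\mathcal K)\subset\mathcal K$. Continuity and relative compactness from the Huang--Tong bounds then give a fixed point by Schauder's theorem. Your extra endpoint care is harmless but not needed: the squeeze $f_-<\mathcal R(f)<f_+$ together with $f_\pm(\pm1)=0$ already forces continuity at contact, without using the $C^{1/2}$ bound.
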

\begin{proof}
Because $y\mapsto\Phi(f;x,y)$ is strictly decreasing, the two inequalities in
\eqref{prof:eq:barrier-lower-sign} place its unique positive root between
$f_-(x)$ and $f_+(x)$. Thus $\mathcal R(\mathcal K)\subset\mathcal K$.
Continuity and relative compactness of $\mathcal R$ then give a fixed point by
Schauder's theorem.
\end{proof}

\subsection{Reduction of the barrier signs to scalar margins}

We now reduce the two inequalities in
\eqref{prof:eq:barrier-lower-sign} to two scalar functions of the target
parameter $T$. This is the only purpose of the notation introduced in this
subsection. The next section will prove that these scalar functions are
positive on the whole interval $0<T\le1$.

Recall from \eqref{prof:eq:phi} that
\[
 \phi_f(x,y)=\int_{-1}^1\int_0^{f(\xi)}
 G_H((x,y),(\xi,\eta))\,\dd\eta\dd\xi.
\]
Thus $(\xi,\eta)$ are simply integration variables in the vortex patch. Since
$f$ is even, the vertical integration columns above $\xi$ and $-\xi$ have the
same height. We keep these two contributions together throughout the
estimate.

Fix a target point $(x,y)$ and a vertical integration column of height $b$
at horizontal distance $a=x-\xi$. Integrating the Green kernel first in the
vertical variable gives
\[
 \frac1y\int_0^bG_H((x,y),(\xi,\eta))\,\dd\eta
 =\frac1{4\pi y}\int_0^b
 \{\log(a^2+(y+\eta)^2)-\log(a^2+(y-\eta)^2)\}\,\dd\eta.
\]
Introduce the elementary primitive
\begin{equation}\label{prof:eq:J}
 J_a(s)=s\log(a^2+s^2)-2s+2|a|\arctan\frac{s}{|a|},
\end{equation}
with the last term defined continuously at $a=0$. Observe that $J_a$ is odd.
The preceding vertical integral equals
\begin{equation}\label{prof:eq:V}
 V(a,y,b)=\frac{J_a(y+b)+J_a(y-b)-2J_a(y)}{4\pi y}.
\end{equation}
Thus $V(a,y,b)$ is simply the contribution of one vertical integration column
to $\phi_f(x,y)/y$.

The speed $c(f)$ in \eqref{prof:eq:speed} is an integral over the same
horizontal variable. The contribution of one column of height $b$ at distance
$p$ from the right contact point is
\begin{equation}\label{prof:eq:C}
 C(p,b)=\frac1{2\pi}\log\left(1+\frac{b^2}{p^2}\right).
\end{equation}

Now write the target as $x=1-d$ and the positive horizontal integration
coordinate as $\xi=1-t$, $0\le t\le1$. Its symmetric partner is $-\xi$.
The two horizontal separations from the target are $t-d$ and $2-d-t$, and
their common height is $b=f(1-t)$. Hence their combined contribution to
$\Phi$ is exactly
\begin{equation}\label{prof:eq:Fpair}
 \cF(d,y;t,b)
 =V(t-d,y,b)+V(2-d-t,y,b)-C(t,b)-C(2-t,b).
\end{equation}
The same height $b$ occurs in all four terms because the graph is even. This
is why the minimization or maximization below is performed only after the two
symmetric contributions have been combined.

For later estimates it is also useful to record the derivative with respect
to this common height:
\begin{align}
 \partial_b\cF(d,y;t,b)
 ={}&\frac1{4\pi y}\log
 \frac{(t-d)^2+(y+b)^2}{(t-d)^2+(y-b)^2}\notag\\
 &+\frac1{4\pi y}\log
 \frac{(2-d-t)^2+(y+b)^2}{(2-d-t)^2+(y-b)^2}\notag\\
 &-\frac b\pi\left(\frac1{t^2+b^2}
 +\frac1{(2-t)^2+b^2}\right).
 \label{prof:eq:Fb}
\end{align}
A strict sign of \eqref{prof:eq:Fb} allows one to select an endpoint of the
height interval. If no sign is proved, the whole interval of heights is
retained.

For $0<T\le1$, inversion of \eqref{prof:eq:T-main} gives
\[
 E(T)=e^{1-1/T},\qquad
 d(T)=\frac{E(T)}{1+\sqrt{1-E(T)}},\qquad
 \omega_0(T)=\frac{E(T)}T.
\]
Thus $x=1-d(T)$. Define the two target heights and the two possible integration
heights by
\begin{equation}\label{prof:eq:ybarriers}
 y_\pm(T)=f_\pm(1-d(T)),
\end{equation}
\begin{equation}\label{prof:eq:bbarriers}
 b_\pm(t)=f_\pm(1-t).
\end{equation}
For a fixed admissible graph, its actual height $b=f(1-t)$ lies in
$[b_-(t),b_+(t)]$. We therefore define the worst possible paired contributions
\begin{align}
 E_-(T,t)&=\min_{b\in[b_-(t),b_+(t)]}
 \cF(d(T),y_-(T);t,b),\label{prof:eq:Eminus}\\
 E_+(T,t)&=\max_{b\in[b_-(t),b_+(t)]}
 \cF(d(T),y_+(T);t,b),\label{prof:eq:Eplus}
\end{align}
and finally the normalized margins
\begin{align}
 m_-(T)&=\frac1{\omega_0(T)}\int_0^1E_-(T,t)\dd t,
 \label{prof:eq:mminus}\\
 m_+(T)&=-\frac1{\omega_0(T)}\int_0^1E_+(T,t)\dd t.
 \label{prof:eq:mplus}
\end{align}

\begin{lemma}[Margins imply the barrier signs]\label{prof:lem:margins}
If $m_-(T)>0$ and $m_+(T)>0$, then for every $f\in\mathcal K$,
\begin{align}
 \Phi(f;1-d(T),y_-(T))&>0,\label{prof:eq:Phi-minus}\\
 \Phi(f;1-d(T),y_+(T))&<0.\label{prof:eq:Phi-plus}
\end{align}
\end{lemma}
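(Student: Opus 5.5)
The plan is to decompose $\Phi(f;x,y)$ exactly into paired column contributions and then bound each pair pointwise in $t$ by its worst case over the admissible height interval.

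\textbf{Step 1: the exact decomposition.} Fix $f\in\mathcal K$ and $T\in(0,1]$, and put $x=1-d(T)$. By \eqref{prof:eq:phi}, the identity \eqref{prof:eq:V}, and Fubini,
\[
\frac{\phi_f(x,y)}{y}=\int_{-1}^1 V\bigl(x-\xi,y,f(\xi)\bigr)\dd\xi .
\]
Split the integral into $\xi\in[0,1]$ and $\xi\in[-1,0]$. On each half substitute $\xi=\pm(1-t)$ with $t\in[0,1]$. The horizontal separations are $t-d$ and $2-d-t$; since $f$ is even, the common height is $b=f(1-t)$. Because $V$ depends on $a$ only through $a^2$ and $|a|$, the sign of the separation does not matter.

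Treat the speed \eqref{prof:eq:speed} the same way. Splitting at $\xi=0$ and writing $1-\xi=t$ on the right half and $1-\xi=2-t$ on the left half gives
\[
c(f)=\int_0^1\bigl[C(t,b)+C(2-t,b)\bigr]\dd t .
\]
Combining the two displays yields
\[
\Phi(f;x,y)=\int_0^1\cF\bigl(d,y;t,f(1-t)\bigr)\dd t,
\]
with $\cF$ as in \eqref{prof:eq:Fpair}.

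\textbf{Step 2: pointwise comparison.} Since $f\in\mathcal K$, we have $f(1-t)\in[b_-(t),b_+(t)]$ for every $t$. By the definitions \eqref{prof:eq:Eminus}--\eqref{prof:eq:Eplus}:
\begin{itemize}
\item at $y=y_-(T)$, $\cF\bigl(d,y_-;t,f(1-t)\bigr)\ge E_-(T,t)$;
\item at $y=y_+(T)$, $\cF\bigl(d,y_+;t,f(1-t)\bigr)\le E_+(T,t)$.
\end{itemize}
Both extrema are attained because $\cF$ is continuous in $b$ on a compact interval. The interval is nondegenerate: the class bounds \eqref{eq:c35-class-bounds} give $b_-(t)>0$ for $t>0$ and $y_\pm(T)>0$.

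Integrating in $t$ and using $\omega_0(T)>0$ gives
\[
\Phi(f;x,y_-)\ge\omega_0\,m_-(T)>0,\qquad \Phi(f;x,y_+)\le-\omega_0\,m_+(T)<0 .
\]
These are exactly \eqref{prof:eq:Phi-minus} and \eqref{prof:eq:Phi-plus}.

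\textbf{Main obstacle: measurability and integrability.} The only delicate point is to justify that $t\mapsto E_\pm(T,t)$ is measurable and integrable, so that $m_\pm$ are well defined and the comparison can be integrated.
\begin{itemize}
\item \emph{Measurability.} $E_\pm$ is a min or max over an interval whose endpoints $b_\pm(t)$ are continuous in $t$, of a jointly continuous function. I would argue this directly on $t\in(0,1]$, where all quantities are continuous; this makes $E_\pm$ continuous there.
\item \emph{Integrability.} $\cF$ contains only logarithmic singularities: at $t=d$ when $y=b$, and in $C(t,b)$ as $t\to0$. These are integrable, since $J_a$ is continuous and $b\le2$. Moreover $b_\pm(t)/t$ stays bounded as $t\to 0$ up to a logarithmic factor, so $C(t,b)$ grows at most like $\log\log(1/t)$ and is integrable. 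Hence $|E_\pm|$ is bounded by an integrable envelope.
\end{itemize}

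If $m_\pm$ are understood as already finite (as the hypothesis $m_\pm(T)>0$ implicitly asserts), this step reduces to noting that the integrals exist. The remaining step is the routine Fubini justification in Step 1, which holds because the kernel $G_H$ is locally integrable.
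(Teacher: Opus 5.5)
Your proposal is correct and follows the paper's argument: you write $\Phi(f;1-d(T),y)$ as the $t$-integral of the paired contribution $\cF$ at the actual height $f(1-t)\in[b_-(t),b_+(t)]$, bound it pointwise by $E_-$ or $E_+$, and integrate to get $\pm\omega_0(T)m_\pm(T)$. Your explicit check of the decomposition, and of the measurability and integrability of $E_\pm$, fills in details the paper leaves implicit. One small correction there: $V$ itself is continuous at $t=d$, $b=y$, so it has no singularity there; only its height derivative blows up.
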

\begin{proof}
For each $t$, the actual height $f(1-t)$ belongs to
$[b_-(t),b_+(t)]$. Hence its paired contribution is bounded below by $E_-$
and above by $E_+$. Integrating in $t$ gives
\[
 \Phi(f;1-d(T),y_-(T))\ge\omega_0(T)m_-(T),
\]
and
\[
 \Phi(f;1-d(T),y_+(T))\le-\omega_0(T)m_+(T).
\]
\end{proof}

The existence problem has therefore been reduced to one concrete statement:
\[
        m_-(T)>0,\qquad m_+(T)>0,\qquad 0<T\le1.
\]
Once these two inequalities are proved, Lemma~\ref{prof:lem:margins} gives the
barrier signs, Proposition~\ref{prof:prop:abstract-barrier} gives a fixed
point, and the exact Sadovskii profile follows.

\subsection{The contact normalization and the speed}
The formula for the speed can be obtained without differentiating the unknown
boundary. For a bounded patch the gradient of its logarithmic potential is
continuous. Indeed, the gradient kernel is bounded by a constant times
$|X-Y|^{-1}$, and its integral over a disk of radius $r$ is at most $Cr$.
Outside that disk the kernel is uniformly continuous. Splitting the potential
into these two parts proves continuity of the gradient, including at a contact
point. The reflected kernel has the same property.

Since $\phi_f(x,0)=0$, the fundamental theorem of calculus gives
\[
 \frac{\phi_f(x,y)}y=\int_0^1\partial_y\phi_f(x,\theta y)\dd\theta.
\]
For an exact profile, let $x\uparrow1$ and $y=f(x)\downarrow0$. The boundary
equation and continuity of the gradient imply
\[
 W=\partial_y\phi_f(1,0).
\]
Direct differentiation of the Green function at the horizontal axis gives
\[
 \partial_yG_H((x,0),(\xi,\eta))
       =\frac{\eta}{\pi((x-\xi)^2+\eta^2)}.
\]
Integrating first in $\eta$ proves \eqref{prof:eq:speed}. The integral is
finite at $\xi=1$: the upper barrier satisfies
$f(\xi)\le C(1-\xi)\log(e/(1-\xi))$, so the logarithm in the speed formula is
bounded by $C+2\log\log(e/(1-\xi))$, an integrable function. Positivity of the
patch gives $c(f)>0$.

This normalization fixes the two contact endpoints and the vorticity strength.
The number $c(f)$ is determined by the graph. It is not an independently
prescribed impulse. In the construction the source graph varies and hence so
does $c(f)$. In the shape derivative at an exact travelling vortex we instead
hold its resulting speed fixed, except when deriving the scaling identity.

\subsection{Elementary bounds for the vertical integral}

The scalar reduction above will be used repeatedly in the quantitative
estimates. We record here only the elementary properties of the vertical
integral that are needed later.

\begin{lemma}[Elementary bounds for the vertical primitive]
\label{lem:vertical-primitive-bounds}
For $A\ge0$, $y>0$ and $b\ge0$, the function $V(A,y,b)$ is nonnegative,
increasing in $b$, and nonincreasing in $A$. If $A>0$, then
\[
 0\le V(A,y,b)\le\frac{b^2}{2\pi A^2}.
\]
For $A\ge0$ and $z\in\R$,
\[
 |J_A(z)-J_0(z)|\le\pi A.
\]
\end{lemma}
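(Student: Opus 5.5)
We argue directly from the explicit formula $V(A,y,b)=\frac1{4\pi y}\int_0^b\log\frac{A^2+(y+\eta)^2}{A^2+(y-\eta)^2}\dd\eta$, which is how $V$ was introduced before \eqref{prof:eq:V}. Here $V$ is written with the primitive $J_A$; since $J_A$ depends on $a$ only through $a^2$ and $|a|$, we may take $A=|a|\ge0$.

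\textbf{Nonnegativity and monotonicity in $b$.} For $\eta\ge0$ and $y>0$ we have $(y+\eta)^2\ge(y-\eta)^2$. Hence the integrand is nonnegative, and it is strictly positive for $\eta>0$ when $A>0$. At $A=0$ the integrand is $\frac{1}{2\pi y}\log\frac{y+\eta}{|y-\eta|}$. This is still nonnegative and has only an integrable logarithmic singularity at $\eta=y$. Nonnegativity of $V$ and monotonicity in $b$ both follow, because $\partial_bV$ equals the integrand evaluated at $\eta=b$.

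\textbf{Monotonicity in $A$.} Set $s=A^2$ and $p=(y+\eta)^2\ge q=(y-\eta)^2$. Then
\[
\partial_s\log\frac{s+p}{s+q}=\frac{1}{s+p}-\frac1{s+q}\le0 .
\]
So the integrand is nonincreasing in $A\ge0$ pointwise, and therefore so is $V$. At $A=0$ one passes to the limit by monotone convergence.

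\textbf{Upper bound for $A>0$.} Write
\[
\log\frac{A^2+p}{A^2+q}=\log\Bigl(1+\frac{p-q}{A^2+q}\Bigr)\le\frac{p-q}{A^2+q}\le\frac{4y\eta}{A^2}.
\]
This gives
\[
V\le\frac1{4\pi y}\int_0^b\frac{4y\eta}{A^2}\dd\eta=\frac{b^2}{2\pi A^2}.
\]

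\textbf{The bound $|J_A-J_0|\le\pi A$.} We have $J_0(z)=z\log z^2-2z$. The key identity is
\[
\frac{d}{dz}J_A(z)=\log(A^2+z^2)+\frac{2z^2}{A^2+z^2}-2+\frac{2A^2}{A^2+z^2}=\log(A^2+z^2).
\]
Hence $\frac{d}{dz}(J_A-J_0)=\log\bigl(1+A^2/z^2\bigr)\ge0$. Both $J_A$ and $J_0$ are odd and vanish at $0$. Therefore, for $z\ge0$,
\[
0\le J_A(z)-J_0(z)=\int_0^z\log\Bigl(1+\frac{A^2}{w^2}\Bigr)\dd w\le\int_0^\infty\log\Bigl(1+\frac{A^2}{w^2}\Bigr)\dd w=A\int_0^\infty\log(1+u^{-2})\dd u .
\]
Integrating by parts, the last integral equals $\bigl[u\log(1+u^{-2})\bigr]_0^\infty+\int_0^\infty\frac{2}{1+u^2}\dd u=\pi$. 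This yields $|J_A(z)-J_0(z)|\le\pi A$ for $z\ge0$, and the case $z<0$ follows by oddness. The case $A=0$ is trivial.

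The only delicate point is the $A=0$ case of the first assertions, which involves the log singularity at $\eta=y$ and the continuous definition of the arctangent term. This is handled by integrability of the singularity and by the limit $A\downarrow0$.
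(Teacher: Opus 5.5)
Your proof is correct, and for three of the claims it follows the paper: nonnegativity, monotonicity in $b$ and the bound $V\le b^2/(2\pi A^2)$ come from positivity of the Green integrand, from $V_b$ being the integrand at $\eta=b$, and from $\log(1+z)\le z$ with denominator at least $A^2$. The other two claims use a different route.

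For monotonicity in $A$, the paper differentiates the primitive, $\partial_AJ_A(z)=2\arctan(z/A)$. This gives $2\pi yV_A=\arctan\frac{y+b}{A}+\arctan\frac{y-b}{A}-2\arctan\frac yA$. The paper shows this is $\le0$ because it vanishes at $b=0$ and has nonpositive $b$-derivative. You instead note that the integrand $\log\frac{A^2+(y+\eta)^2}{A^2+(y-\eta)^2}$ is itself nonincreasing in $A^2$, since $(y+\eta)^2\ge(y-\eta)^2$. This is shorter and avoids the second differentiation. It also handles $A=0$ directly: the $A=0$ integrand dominates every $A>0$ integrand pointwise, so your monotone-convergence step is not even needed.

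For $|J_A(z)-J_0(z)|\le\pi A$, the paper reuses the same $A$-derivative: $|\partial_AJ_A|\le\pi$, integrated from $0$ to $A$. You integrate in $z$ instead, using $J_A'(z)=\log(A^2+z^2)$ and $\int_0^\infty\log(1+A^2/w^2)\dd w=\pi A$. The paper uses both of these facts elsewhere. Your route costs an integration by parts, but it gives the extra one-sided information $J_A(z)\ge J_0(z)$ for $z\ge0$. The paper's route has the economy that one derivative identity serves both the monotonicity in $A$ and the $J_A-J_0$ bound.
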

\begin{proof}
The positivity of $G_H$ gives the first assertion. Differentiating the upper
limit of its vertical integral gives
\[
 V_b(A,y,b)=\frac1{4\pi y}
       \log\left(1+\frac{4yb}{A^2+(y-b)^2}\right)\ge0.
\]
The possible infinite derivative at $A=0,b=y$ does not affect continuity or
monotonicity of $V$. For $A>0$, use $\log(1+z)\le z$ in the original Green
integral. Since its denominator is at least $A^2$,
\[
 V(A,y,b)\le\frac1y\int_0^b\frac{y\eta}{\pi A^2}\dd\eta
            =\frac{b^2}{2\pi A^2}.
\]
To prove horizontal monotonicity, differentiate $J_A$ with respect to $A$:
$\partial_AJ_A(z)=2\arctan(z/A)$. Consequently
\[
 2\pi y V_A
 =\arctan\frac{y+b}{A}+\arctan\frac{y-b}{A}-2\arctan\frac yA.
\]
The expression on the right is zero at $b=0$, while its $b$ derivative is
\[
 \frac{-4Ayb}{(A^2+(y+b)^2)(A^2+(y-b)^2)}\le0.
\]
Thus $V_A\le0$; continuity includes $A=0$. Finally,
$|\partial_AJ_A(z)|\le\pi$, and integration from zero to $A$ gives the last
bound. No ordering of $b$ and $y$ was used.
\end{proof}

For each symmetric pair of integration columns at $\xi$ and $-\xi$, evenness
fixes one common height. Thus \eqref{prof:eq:Fpair} is an identity before any
optimization in that height. The definitions of $m_-$ and $m_+$ allow the
common height to vary independently from one horizontal integration position
to another, but they never allow different heights at $\xi$ and $-\xi$. For a
fixed admissible graph, its actual height is one of the allowed choices at every
position. This pointwise inclusion is exactly what is needed for the lower and
upper integral bounds.

\subsection{The two uniform scalar inequalities}
\begin{proposition}[Uniform profile inequalities]\label{prof:thm:barriers}
For the explicit barriers defined above, the two margins satisfy
\begin{equation}\label{eq:c35-global-profile}
 m_-(T)>10^{-6},\qquad m_+(T)>10^{-6},\qquad 0<T\le1.
\end{equation}
\end{proposition}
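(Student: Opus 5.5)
The plan is to split $(0,1]$ into three closed pieces that overlap at their ends, as the outline indicates: a contact piece $0<T\le T_1$ with $T_1$ small, a transition piece $T_1\le T\le T_2$, and a regular piece $T_2\le T\le1$, where $x=1-d(T)$ is bounded away from $1$.

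\textbf{Regular piece.} Here I would drop $T$ and use the physical coordinate $x\in[0,x_2]$, where $\omega_0$ is bounded below and the integrands are smooth in $x$. I would cover $[0,x_2]$ by finitely many subintervals. On each subinterval I would enclose $\int_0^1E_\pm\,\dd t$ by interval arithmetic. The integral in $t$ needs care only near $t=d$, where $V$ has a logarithmic integrable singularity. There I would integrate $J_a$ analytically in $t$, or simply bound it using Lemma~\ref{lem:vertical-primitive-bounds}.

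To evaluate $E_\pm$, which is a min or max over the height interval, I would use the sign of $\partial_b\cF$ from \eqref{prof:eq:Fb}. Where that sign is certified, the extremum is attained at $b_\mp$ or $b_\pm$. Where it is not, I would keep the whole interval of heights in the enclosure, which costs only a margin of order $\rho$ because the band is narrow.

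\textbf{Transition piece.} Here I would obtain certified lower bounds $m_\pm(T_j)\ge\mu_j$ at finitely many nodes $T_j$. Between nodes I would use a one-sided second-derivative bound $m_\pm''\le M$ on each cell, derived analytically from the explicit forms of $d(T)$, $y_\pm(T)$ and the kernel derivatives. A concave-majorant argument then gives
\[
m_\pm(T)\ge\min(\mu_j,\mu_{j+1})-\tfrac{M}{8}(T_{j+1}-T_j)^2
\]
on each cell, and the node spacing is chosen so that this exceeds $10^{-6}$. The extremum over $b$ is handled as in the regular piece. Since $E_\pm$ is a min or max of smooth functions, I would bound the second derivative of each candidate endpoint branch and use that $\min$ preserves upper concavity bounds in the right direction.

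\textbf{Contact piece.} I expect this to be the main obstacle. As $d\to0$ we have $y_\pm\sim\omega_0(T)(P_5/Q_5\pm\rho)(T)$, and both $\phi_f/y$ and $c(f)$ carry large terms of size $\log(1/d)$ that cancel at leading order. This cancellation is exactly why $\omega_0$ contains the factor $1-\log(1-x^2)$.

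I would handle it by splitting the $t$-integral into a near zone $t\le\Lambda d$ and a far zone $t\ge\Lambda d$. In the far zone, $V(t-d,y,b)-C(t,b)$ is expanded: the difference of the primitives $J$ is compared with $C$ by a Taylor expansion in $y/t$ and $d/t$, with explicit remainders. The logarithmic main parts $\frac1\pi\int b(t)\,t^{-1}\cdots$ then cancel exactly against the $C$ terms, leaving an $O(\omega_0)$ quantity. In the near zone, $V$ and $C$ are controlled directly by the bounds in Lemma~\ref{lem:vertical-primitive-bounds}, using $b\le C t\log(e/t)$.

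After dividing by $\omega_0(T)$, this should yield $m_\pm(T)=\ell_\pm(T)+R_\pm(T)$. Here $\ell_\pm$ is an explicit expression in $P_5/Q_5$, $\rho$ and the barrier tails near $T=0$, and $\ell_\pm(0^+)>0$ is precisely the design condition on $\rho$ near contact. The remainder satisfies $|R_\pm|\le C_0T$, or $C_0\sqrt{E(T)}$, with explicit constants.

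Choosing $T_1$ small enough that $C_0T_1$ is less than half of the certified $\inf\ell_\pm$ gives positivity on $(0,T_1]$. The delicate part is making the remainder constants explicit and small enough that $T_1$ can be reached by the transition-piece grid. If that fails, I would first try including the next-order term in the far-zone expansion, so that $\ell_\pm$ absorbs it.
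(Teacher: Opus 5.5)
Your contact piece has a genuine gap: the far-zone expansion uses the wrong small parameter. Near contact the target is high compared with its distance to contact, $y/d=(1+p)h_\sigma(T)/T\to\infty$ as $T\downarrow0$ (already about $70$ for the upper barrier at $T=0.03$). So for fixed $\Lambda$ and small $T$, your far zone $t\ge\Lambda d$ contains the whole range $\Lambda d\le t\lesssim y$, and there no expansion in $y/t$ exists.

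To see this, write $V(t-d,y,b)-C(t,b)=W_\infty(t)+D(t,b)$, where $W_\infty(t)=\frac{\log t}{\pi}-\frac{J_{|t-d|}(y)}{2\pi y}$ is the height-independent part; this is \eqref{eq:c35-finiteheight}. The primitive $J_{|t-d|}$ has branch points at $y=\pm i|t-d|$, so its $y$-series has radius $|t-d|$. Term by term, the $y^2$-term $-y^2/(6\pi(t-d)^2)$ of $W_\infty$, integrated over $t\ge\Lambda d$ and divided by $\omega_0=y/h_\sigma$, is about $-(1+p)h_\sigma^2/(6\pi(\Lambda-1)T)$. This is unbounded as $T\downarrow0$, and higher terms are worse. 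The range $\Lambda d\le t\lesssim y$ has length $\sim y=h_\sigma\omega_0$, so it contributes at order one. In fact the region $t\sim y$ is exactly where the term $-h_\sigma/4$ of \eqref{eq:c35-contact-leading} comes from.

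What is small there is $y/b$ and $d\,t/(t^2+b^2)$, because the source columns are taller still. Since $b(t)/t\gtrsim\log(1/t)$, one has $b(t)\ge2y$ for $t\ge kd$ with a fixed $k$ ($k=16$ for the lower barrier). The missing idea is to keep the finite target height exact and expand around infinitely tall columns:
\begin{itemize}
\item integrate $W_\infty$ in closed form via $K_y'(A)=J_A(y)$;
\item use $\log(2/y)=\log(2T/h_\sigma)-1+1/T$ to cancel the $1/T$ algebraically, which gives the explicit leading term \eqref{eq:c35-contact-leading};
\item estimate only $D$: by Lemma~\ref{lem:c35-meanlog} on $t<kd$ (this covers the diagonal and needs no ordering of heights), and by drift and vertical Taylor bounds in $y/b$ on $t\ge kd$;
\item for the upper barrier, bound the near source by Jensen's inequality, and show the far part of $D$ is nonpositive using the sign of the exact height derivative $W_b$ together with concavity on $kd\le t\le r_*$.
\end{itemize}

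This also changes some expectations in your plan. Your design condition is right ($h_-(0)<2/\pi<h_+(0)$). But the leading term involves only the normalized target height $h_\sigma(T)$, not the source tails. The remainder is genuinely of order $T$, since the near-source loss is a bounded multiple of $d/\omega_0=T/(1+p)$; an $O(\sqrt{E(T)})$ remainder is not available.

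The constants are decisive. At $T=0.03$ the certified contact margins are only about $0.012$ (lower) and $0.0067$ (upper), out of leading terms of about $0.11$--$0.15$. So the near zone must keep the cancellation between $V$ and $C$. On the paper's near zone $[0,16d]$, discarding $V$ through $V\ge0$ would by itself cost more than the whole leading term.

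Your transition piece (nodes plus a bound on $\partial_T^2m_\pm$) can replace the paper's layout only with ingredients you do not mention:
\begin{itemize}
\item the logarithmic singularity at $t=d(T)$ moves with the target, so it must be straightened before differentiating twice in $T$ (the paper uses $t=d(T)+\eta\delta(T)r$ and \eqref{prof:eq:Jregular});
\item target cells must end at the knots of $\rho$, where $y_\pm(T)$ has corners;
\item source knots near the diagonal need a smooth continuation, whose error enters as the $2E$ term of Lemma~\ref{prof:lem:peano}.
\end{itemize}
On $[0.03,0.05]$, where $\rho$ drops from $0.5$ to $0.0025$ and the certified margins are barely above $10^{-6}$, the paper avoids curvature bounds altogether. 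It encloses whole target boxes directly in the scale-free variables $u=t/d$, $c=y/d$.
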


The next section proves this proposition on the full interval $0<T\le1$.
The height in \eqref{prof:eq:Eminus}--\eqref{prof:eq:Eplus} may vary
independently from one horizontal integration position to another. This only
enlarges the admissible class and therefore preserves the lower and upper
bounds for every $f\in\mathcal K$. At each symmetric pair of positions,
however, the same height is used in both contributions, as required by evenness.

\begin{proof}[Proof of Proposition~\ref{prof:thm:main}]
By Lemma~\ref{prof:lem:margins}, \eqref{eq:c35-global-profile} implies the two
barrier signs. Proposition~\ref{prof:prop:abstract-barrier} gives a fixed point
$f_*\in\mathcal K$. Positivity follows from the lower barrier, and strict
monotonicity is a property of the root map. Finally,
\[
 |f_*-\bar f|\le\omega_0\rho,\qquad
 \sup_{0<T\le1}\frac{e^{1-1/T}}T\rho(T)=0.006
\]
The last equality follows directly from the explicit piecewise-affine radius
$\rho$. This proves the asserted pointwise and uniform localization bounds.
\end{proof}

\section{Uniform estimates for the profile equation}\label{sec:profile-estimates}
By Lemma~\ref{prof:lem:margins}, the existence proof is now reduced to proving
\[
        m_-(T)>0,\qquad m_+(T)>0,\qquad 0<T\le1.
\]
This section proves exactly these two inequalities. The argument has four
regimes because the geometry changes as the target moves from contact to the
symmetry axis: a contact regime, a short transition, a regular compact region,
and a neighbourhood of the symmetry axis. These are not merely different
numerical subdivisions. Near contact one must first cancel the leading
logarithm analytically; on the regular compact region a one-sided second
derivative estimate propagates nodal bounds across each target cell; and near
the symmetry axis a physical horizontal coordinate removes the apparent
coordinate singularity.

Throughout the section the object being bounded is the complete integral in
the variable $t$. Numerical subdivision is used only to enclose that integral
and its variation with the target parameter $T$. Positivity at finitely many
target values would not by itself prove the inequality for every $T$.

The proof of \eqref{eq:c35-global-profile} uses the contact interval
$(0,0.03]$, the transition $[0.03,0.05]$, the regular range $[0.05,0.9]$, and
the symmetry-axis region. The contact estimate is uniform down to $T=0$.
In the transition we estimate the whole target interval directly. In the
regular range we bound the minimum over all allowed heights and use a
one-sided second-derivative estimate to pass from finitely many target values
to the whole interval. Near the symmetry axis we work in the physical
horizontal coordinate.

\subsection{The reference integral at contact}
\label{sec:c35-contact}

For the contact calculation let $x=x(T)$ be determined by
\eqref{prof:eq:T-main} and define the normalized barrier heights
\begin{equation}\label{eq:normalized-barrier-heights}
 h_\pm(T)=\frac{f_\pm(x(T))}{\omega_0(x(T))}.
\end{equation}
For $\sigma\in\{-1,+1\}$ write $h_\sigma=h_-$ when $\sigma=-1$ and
$h_\sigma=h_+$ when $\sigma=+1$.  Also set
\[
 E(T)=e^{1-1/T},\qquad p(T)=\sqrt{1-E(T)},\qquad
 w(T)=E(T)/T,\qquad y=w(T)h_\sigma(T).
\]
Here $p=x$ is the physical horizontal coordinate and $d=1-p$.
We use $w(T)=\omega_0(x(T))=E(T)/T$ in this section only; thus $d/w=T/(1+p)$.

Unfold the paired source integral to $0<t<2$, reflecting the source height
about $t=1$.  For one branch set
\[
 W(t,b)=V(t-d,y,b)-C(t,b),\qquad
 W_\infty(t)=\frac{\log t}{\pi}-\frac{J_{|t-d|}(y)}{2\pi y}.
\]
The subscript $\infty$ denotes a reference kernel, not an admissible profile.
A direct use of \eqref{prof:eq:Jprime} gives
\begin{equation}\label{eq:c35-finiteheight}
 W-W_\infty=\frac1{2\pi}\left\{
 \frac1{2y}\int_{-y}^{y}\log\bigl((t-d)^2+(b+v)^2\bigr)\dd v
 -\log(t^2+b^2)\right\}=:D(t,b).
\end{equation}
The reference term has an explicit integral.

Define
\[
 K_y(A)=yA\log(A^2+y^2)-3yA+y^2\arctan(A/y)
                    +A^2\arctan(y/A).
\]
Its continuous endpoint values are used at $A=0$, and $K_y'(A)=J_A(y)$.
Consequently
\begin{align}
 I_\infty:=\int_0^2W_\infty(t)\dd t
 &=\frac1\pi\left(2\log2-2-\frac{K_y(d)+K_y(2-d)}{2y}\right)
 \label{eq:c35-ref-exact}\\
 &=\frac d\pi\bigl(\log(2/y)+1\bigr)-\frac y4+\mathcal E,
 \label{eq:c35-ref-asymptotic}
\end{align}
where
\begin{equation}\label{eq:c35-ref-error}
 -\frac{d^2}{2\pi(2-d)}-\frac{d^2}{4y}
 \le\mathcal E\le\frac{y^2}{6\pi(2-d)}.
\end{equation}
For example, these bounds follow by writing the two endpoint terms in
\eqref{eq:c35-ref-exact} with $d/y$ and $y/(2-d)$, and using
$0\le z-\arctan z\le z^3/3$ together with
$0\le z-\log(1+z)\le z^2/2$ in their stated positive ranges.
The leading normalized expression is exactly
\begin{equation}\label{eq:c35-contact-leading}
 \frac{1+T\log(2T/h_\sigma(T))}{\pi(1+p(T))}
 -\frac{h_\sigma(T)}4.
\end{equation}
In particular the large $1/T$ logarithm has disappeared before interval
arithmetic is applied.

\begin{lemma}[A mean-log bound without a height-order assumption]
\label{lem:c35-meanlog}
For $A\ge0$, $b>0$ and $y>0$,
\[
 \frac1{2y}\int_{-y}^{y}\log(A^2+(b+v)^2)\dd v
 \ge\log(A^2+b^2)-1.
\]
\end{lemma}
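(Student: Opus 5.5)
The plan is to reduce the inequality to the case $A=0$ by a concavity argument, and then to settle the case $A=0$ by an explicit one-variable computation. No ordering between $b$ and $y$ is assumed at any point.

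\emph{Step 1 (reduction to $A=0$).} Fix $v\in(-y,y)$ and write $X=(b+v)^2$, $Y=b^2>0$, $a=A^2$. Put
\[
 \lambda=\frac{a}{a+Y}\in[0,1].
\]
Then
\[
 \log(a+X)-\log(a+Y)=\log\Bigl(\lambda\cdot 1+(1-\lambda)\frac XY\Bigr)\ge(1-\lambda)\log\frac XY,
\]
by concavity of the logarithm; when $X=0$ the right-hand side is $-\infty$ and there is nothing to check. The weight $1-\lambda$ does not depend on $v$. Averaging over $v\in(-y,y)$ and substituting $u=v/b$, $r=y/b$, I get
\[
 \frac1{2y}\int_{-y}^{y}\log(A^2+(b+v)^2)\dd v-\log(A^2+b^2)\ \ge\ (1-\lambda)\,G(r),
\]
where
\[
 G(r)=\frac1{2r}\int_{-r}^{r}2\log|1+u|\dd u=\frac1r\int_{1-r}^{1+r}\log|s|\dd s .
\]
The logarithmic singularity at $s=0$ is integrable. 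If $G(r)\ge0$ the lemma is immediate. Otherwise $G(r)\ge-1$ together with $1-\lambda\le1$ gives the claim. So it suffices to prove $G(r)\ge-1$ for all $r>0$, which is exactly the $A=0$ case of the lemma.

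\emph{Step 2 (the case $A=0$).} I will use the primitive $s\log s-s$ of $\log s$ and split according to whether $r\le1$ or $r>1$.

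If $0<r\le1$, then
\[
 rG(r)=(1+r)\log(1+r)-(1-r)\log(1-r)-2r .
\]
Here $-(1-r)\log(1-r)\ge0$, and $(1+r)\log(1+r)\ge r$ because $\log(1+r)\ge r/(1+r)$. Hence $rG(r)\ge -r$, that is, $G(r)\ge-1$.

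If $r>1$, the interval $(1-r,1+r)$ contains $0$, and splitting the integral at $s=0$ gives
\[
 rG(r)=(1+r)\log(1+r)+(r-1)\log(r-1)-2r .
\]
It therefore suffices to show that
\[
 \varphi(r)=(1+r)\log(1+r)+(r-1)\log(r-1)-r\ge0 \qquad\text{for } r\ge1 .
\]

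\emph{Step 3 (positivity of $\varphi$; the main point).} This is the only step that is not a one-line estimate, since the crude bounds do not suffice on the window $1<r<\sqrt e$. I will use convexity of $\varphi$. One computes
\[
 \varphi'(r)=\log(r^2-1)+1,\qquad \varphi''(r)=\frac{2r}{r^2-1}>0 .
\]
So $\varphi$ attains its minimum on $[1,\infty)$ at $r_0=\sqrt{1+1/e}\approx1.17$. An elementary numerical evaluation gives
\[
 \varphi(r_0)\approx 2.17\log 2.17+0.17\log 0.17-1.17\approx0.2>0 .
\]
A cleaner way to present the same fact is
\[
 \varphi(r_0)=2r_0\log(1+r_0)-\frac{r_0-1}{1}-r_0 ,
\]
using $\log(r_0-1)=-1-\log(1+r_0)$. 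If a fully analytic certificate is preferred, this identity can be combined with the bound $\log(1+r_0)\ge\log2$.

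Hence $G(r)\ge-1$ for every $r>0$, and by Step 1 the lemma follows for all $A\ge0$.
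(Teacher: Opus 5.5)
Your proof is correct and follows the paper's route. You use the same concavity reduction with $\lambda=A^2/(A^2+b^2)$ to reach the case $A=0$, and then split that case at $y=b$. For $y\le b$, your bound via $(1+r)\log(1+r)\ge r$ and $-(1-r)\log(1-r)\ge0$ is more direct than the paper's expansion.

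There is one slip in your ``cleaner'' identity, so the analytic certificate you advertise is false as written. Substituting $\log(r_0-1)=-1-\log(1+r_0)$ gives
$\varphi(r_0)=2\log(1+r_0)+1-2r_0$, because the coefficient of the logarithm is $(1+r_0)-(r_0-1)=2$, not $2r_0$. The corrected formula agrees with your numerical value $\approx0.21$, whereas your displayed formula evaluates to about $0.47$. With the correct identity, $\log(1+r_0)\ge\log2$ is not enough on its own. You also need $r_0<\tfrac12+\log2$, that is, $1+e^{-1}<(\tfrac12+\log2)^2$, which does hold ($1.368<1.423$).

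Alternatively, the paper avoids locating the minimum of $\varphi$ at all. In the variable $x=b/y=1/r\in(0,1)$, your quantity $G(r)$ equals $(1+x)\log(1+x)+(1-x)\log(1-x)-2-2\log x$. The first two terms sum to $\sum_{k\ge1}x^{2k}/(k(2k-1))\ge x^2$. The function $x^2-2-2\log x$ is decreasing on $(0,1]$ and equals $-1$ at $x=1$. This gives $G\ge-1$ for $b<y$ with no numerical evaluation.
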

\begin{proof}
First let $A=0$.  If $y/b\le1$, expansion of the two endpoint logarithms
shows that the difference from $\log b^2$ is at least $2\log2-2>-1$.
If $x=b/y\le1$, the same difference is
\[
 (1+x)\log(1+x)+(1-x)\log(1-x)-2-2\log x.
\]
The first two terms have sum at least $x^2$; the resulting function
$x^2-2-2\log x$ is decreasing on $(0,1]$ and has value $-1$ at $1$.
For $A>0$, set $\lambda=A^2/(A^2+b^2)$ and apply concavity of the
logarithm to
$\lambda+(1-\lambda)(1+v/b)^2$.  The case just proved gives a lower
bound $-(1-\lambda)\ge-1$ after averaging.  Logarithmic zeros are
integrable, so the argument includes $b<y$.
\end{proof}

\subsubsection{The lower barrier on the whole contact interval}

The following explicit bounds are established by rational Bernstein
arithmetic on the printed band:
\begin{equation}\label{eq:c35-contact-geometry}
 h_-(T)>\tfrac1{20}\ (0\le T\le1),\quad
 h_-(T)<\tfrac18\ (0\le T\le0.03),\quad
 h_-(T)>0.37\ (0.035\le T\le1).
\end{equation}
They hold for every intermediate source height as well in the appropriate
lower-bound direction.  Put
\[
 T_0=0.03,\quad m=\tfrac1{20},\quad H=\tfrac18,\quad
 P_0=1.999,\quad k=16,\quad D_0=1-T_0(\log k+0.001).
\]
Here $1+p(T)>P_0$ and $E(T_0)<10^{-14}$.
The near source is $0<t<kd$.  Lemma~\ref{lem:c35-meanlog}, followed by
$t=du$, bounds its normalized loss below by $-N_-$, where
\[
 N_-:=\frac{T_0}{P_0\pi}
       \left\{\frac k2+k\log k-(k-1)\log(k-1)+\log2\right\}.
\]
On the remaining small-source interval the lower height bound implies
$b\ge2y$.  The horizontal drift in \eqref{eq:c35-finiteheight} costs at most
\[
 B_d:=\frac{T_0}{P_0\pi}\left(\frac{k}{k-1}\right)^2 9.
\]
For completeness, the bound $9$ follows from splitting the source integral
at $S=0.035$ and using \eqref{eq:c35-contact-geometry}:
\begin{equation}\label{eq:c35-driftconstant}
 \int_0^2\frac{t}{t^2+b(t)^2}\dd t
 \le\frac{0.035}{0.999(P_0m)^2}
       +\frac{\pi}{2(0.37)}+\log2<9.
\end{equation}
On the small-source part, the substitution $S=\tau(t)$ and
$p(S)>0.999$ gives the first term.  The remaining near branch uses
$h\ge0.37$, and the reflected interval contributes at most $\log2$.

Taylor's formula for the vertical mean in
\eqref{eq:c35-finiteheight}, now in the region $b\ge2y$, bounds the
remaining vertical loss by
\[
 B_y:=\frac{8H^2T_0}{3\pi m^2(1.99)^2 kD_0^2}+10^{-9}.
\]
This is obtained by integrating the bound for the second vertical derivative
of the logarithm; on the small-source interval
$b(t)\ge1.99m\,t(1-\log t)$ and
$1-\log(kd)\ge D_0/T$.  The separated reflected contribution is covered
by the displayed $10^{-9}$ allowance.  These inequalities concern the
whole source-height band, not a selected minimizing graph.

In \eqref{eq:c35-contact-leading} one must not minimize
$T\log(2T/H)$ only at an endpoint: its minimum on $T>0$ is $-H/(2e)$.
Together with \eqref{eq:c35-ref-error}, this gives the uniform reference
lower bound
\[
 R_-:=\frac{1-H/(2e)}{2\pi}-\frac H4
            -\frac{T_0^2}{4mP_0^2}-10^{-9}.
\]
Thus the lower contact margin satisfies the explicit inequality
\begin{equation}\label{eq:c35-minuscontact}
 m_-(T)\ge R_--N_--B_d-B_y>0.0118173\qquad(0<T\le0.03).
\end{equation}
The separate bounds $R_->0.12311945$, $N_-<0.05939679$,
$B_d<0.04891686$ and $B_y<0.00298850$ are sufficient for the common
threshold in Proposition~\ref{prof:prop:contact}. The more precise bound
in \eqref{eq:c35-minuscontact} uses the unrounded interval endpoints.

\subsubsection{The upper barrier: the far correction has the right sign}

The upper argument needs an upper bound for $D$.  Jensen's inequality gives
\[
 D(t,b)\le\frac1{2\pi}
      \log\left(1+\frac{d^2+y^2/3}{t^2+b^2}\right).
\]
Set $k=64$.  On a contact block $0<T\le B$, let
$h_{\rm lo}\le h_+(T)\le h_{\rm hi}$ and put
\[
 D_B=1-B(\log k+0.001),\quad
 C_B=\tfrac12(B^2+4h_{\rm hi}^2),\quad
 D_f=1-B\{\log(C_B/B^2)+0.001\}.
\]
Integration of the last logarithmic bound over $0<t<kd$, enlarged to the
positive half-line, bounds the normalized near correction by
\begin{equation}\label{eq:c35-upper-near}
 N_+(B)=\frac{B}{2P_0}
    \frac{\sqrt{B^2+4h_{\rm hi}^2/3}}{mP_0D_B}.
\end{equation}
The far correction is nonpositive.  To see this without guessing an
extremizing height, write $r_*=(d^2+y^2)/(2d)$.  For $t\ge r_*$ the exact
height derivative is
\[
 W_b=\frac b\pi\left\{
 \frac{2td-d^2-y^2}{M(t^2+b^2)}
 +\frac{\operatorname{atanhc}(2yb/M)-1}{M}\right\}\ge0,
 \quad M=(t-d)^2+y^2+b^2,
\]
where $\operatorname{atanhc}(z)=\operatorname{atanh}(z)/z\ge1$.
Since $W\to W_\infty$ as $b\to\infty$, this implies $D\le0$.
On $kd\le t\le r_*$, the inequality $b\ge y+t$ makes the vertical
logarithm concave and gives the same conclusion.  The latter inequality
follows from the explicit tests
\[
 r_*<0.001,\qquad D_f>0,\qquad
 mP_0D_f-2h_{\rm hi}/k-B>0.
\]
Indeed $r_*/d\le C_B/T^2$, so the source lower bound gives
$b/t\ge mP_0D_f/T$, whereas $y/t\le2h_{\rm hi}/(kT)$.

The two blocks and the required constant bounds are
\begin{center}
\begin{tabular}{@{}cccc@{}}
\toprule
Target block & $h_{\rm lo}$ & $h_{\rm hi}$ & Lower bound for $m_+$\\
\midrule
$(0,0.02]$ & $1.07$ & $1.125$ & $0.0373399$\\
$[0.02,0.03]$ & $1.05$ & $1.08$ & $0.00666662$\\
\bottomrule
\end{tabular}
\end{center}
For the first block, the lower bound for $-I_\infty/w$ is
$h_{\rm lo}/4-1/(\pi P_0)-10^{-9}$.  For the second, it is
\[
 \frac{h_{\rm lo}}4-\frac{1+0.02\log(0.04/h_{\rm lo})}{\pi P_0}
 -10^{-9}.
\]
Subtracting \eqref{eq:c35-upper-near} proves the table.  All range
conditions just used are strict directed inequalities in the short
constants verification; the elementary functions are evaluated by MPFR.
The height bounds and \eqref{eq:c35-contact-geometry} follow from the exact
polynomial estimates of Appendix~\ref{app:profile-details}.

\begin{proposition}[Uniform analytic contact]\label{prof:prop:contact}
For every $0<T\le0.03$, both margins satisfy $m_\sigma(T)>0.00666$.
\end{proposition}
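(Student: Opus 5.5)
The plan is to assemble the contact estimates of this subsection into a single lower bound, treating the two signs separately. In both cases we start from the unfolded representation of the paired source integral over $0<t<2$. For each branch we split $W=W_\infty+D$, where $D$ is the finite-height correction defined in \eqref{eq:c35-finiteheight}. The normalized margin $m_\sigma(T)$ then equals $\pm I_\infty/w$, corrected by the normalized integral of $D$ taken at the worst admissible height. This is legitimate because the definitions \eqref{prof:eq:Eminus}--\eqref{prof:eq:Eplus} optimize pointwise in $t$ over the band $[b_-(t),b_+(t)]$. Every bound below is therefore required to hold uniformly over that band, not for a chosen profile.

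For $\sigma=-1$ we first use the exact formula \eqref{eq:c35-ref-exact} with the asymptotics \eqref{eq:c35-ref-asymptotic}--\eqref{eq:c35-ref-error}. After division by $w=E/T$, this reduces the reference part to \eqref{eq:c35-contact-leading}, with the $1/T$ logarithm cancelled exactly. We then bound $T\log(2T/h_-)$ from below by $T\log(2T/H)\ge -H/(2e)$, using $h_-<H$ from \eqref{eq:c35-contact-geometry}. We also use $1+p>P_0$, which holds because $E(T_0)<10^{-14}$. The error $\mathcal E$ is absorbed through $d/w=T/(1+p)$ and $h_->m$. Together these give $R_-$.

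The correction $D$ is split at $t=kd$. On the near source $0<t<kd$ we apply Lemma~\ref{lem:c35-meanlog}, change variables $t=du$, and integrate $-\log(t^2+b^2)+\log((t-d)^2+b^2)-1$ explicitly; this yields $-N_-$. On $t>kd$ the lower barrier gives $b\ge 2y$. We then separate the horizontal drift from the vertical averaging. The drift is bounded by $\int t/(t^2+b^2)$, which is controlled by \eqref{eq:c35-driftconstant}; this gives $B_d$. The vertical averaging is controlled by a second-order Taylor bound for $v\mapsto\log((t-d)^2+(b+v)^2)$, using $b\ge 1.99m\,t(1-\log t)$ and $1-\log(kd)\ge D_0/T$; this gives $B_y$. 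Evaluating the four constants in directed interval arithmetic gives \eqref{eq:c35-minuscontact}, and $0.0118>0.00666$.

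For $\sigma=+1$ we argue block by block on $(0,0.02]$ and $[0.02,0.03]$. Now we need an upper bound on $D$, which we take from Jensen's inequality. On the near range $t<kd$ with $k=64$, integrating the Jensen bound over the half-line yields $N_+(B)$. On $t\ge kd$ we show $D\le0$, so the far range can only help. For $t\ge r_*$ this follows from the sign of $W_b$ together with $W\to W_\infty$ as $b\to\infty$. On $kd\le t\le r_*$ it follows from concavity once $b\ge y+t$, and that inequality is secured by the three listed tests. The reference part $-I_\infty/w$ is bounded below using $h_+\ge h_{\rm lo}$ and the leading expression \eqref{eq:c35-contact-leading}. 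Subtracting $N_+(B)$ gives the table entries, the smaller of which is $0.00666662>0.00666$. Taking the minimum over both signs and both blocks proves the proposition.

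The main obstacle is the second block of the upper barrier. There the margin is barely above the threshold, so the near correction $N_+$ and the reference bound must be evaluated with unrounded constants. The conditions $r_*<0.001$ and $D_f>0$ must also be checked uniformly on the block.
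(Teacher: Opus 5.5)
Your proposal is correct and takes the same route as the paper. The paper's proof simply combines the lower-barrier bound $m_-\ge R_--N_--B_d-B_y>0.0118173$ with the two upper-barrier block estimates (reference lower bound minus $N_+(B)$, far correction $D\le0$), whose smaller value $0.00666662$ exceeds $0.00666$, noting that every constant is uniform on the open-at-zero interval. Your reconstruction of those ingredients gives exactly this.
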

\begin{proof}
Combine \eqref{eq:c35-minuscontact} with the two upper-barrier estimates.
Every estimate is uniform on its open-at-zero interval, so no value at
$T=0$ or numerical limiting process is required.
\end{proof}

\subsubsection{Derivation of the normalized leading term}
The small quantities in this calculation are related, rather than independent.
Indeed,
\[
 \frac d w=\frac{T}{1+p},\qquad \frac yw=h_\sigma,
 \qquad \log\frac2y=\log\frac{2T}{h_\sigma}-1+\frac1T.
\]
It follows that
\[
 \frac1w\left\{\frac d\pi\left(\log\frac2y+1\right)-\frac y4\right\}
 =\frac{1+T\log(2T/h_\sigma)}{\pi(1+p)}-\frac{h_\sigma}4.
\]
In particular, the finite normalized limit is obtained by cancelling $T$
against $1/T$ algebraically. Estimating the two factors separately would
lose the information carried by that identity.

The primitive $K_y$ can be checked by ordinary differentiation. In the
unfolded interval, $|t-d|$ runs from $d$ down to zero and then up to $2-d$.
The value of $K_y$ at zero is zero, so the two changes of variable give
\[
 \int_0^2J_{|t-d|}(y)\dd t=K_y(d)+K_y(2-d).
\]
Also $\int_0^2\log t\dd t=2\log2-2$. These identities prove
\eqref{eq:c35-ref-exact}, including both ends of the unfolded source. The
remainder in \eqref{eq:c35-ref-error} is retained after division by $w$;
its term $d^2/y$, for example, becomes
\[
 \frac{d^2}{yw}=\frac{T^2}{(1+p)^2h_\sigma}.
\]
This is bounded on a contact interval by the positive lower bound for
$h_\sigma$. It is not set to zero on the grounds that $d$ is small.

\subsubsection{The near-source logarithmic integral}
Here is the estimate underlying the constant $N_-$. The mean-log lemma
gives, for every permitted height,
\[
 D(t,b)\ge\frac1{2\pi}
     \left\{\log\frac{(t-d)^2+b^2}{t^2+b^2}-1\right\}.
\]
For positive numbers $a,c$ and $v\ge0$, the quotient $(a+v)/(c+v)$ lies
between $a/c$ and $1$. Taking $t=du$ therefore yields
\[
 D(du,b)\ge-\frac1{2\pi}
       -\frac1\pi\left(\log\frac{u}{|u-1|}\right)_+.
\]
The logarithm is positive only for $u>1/2$. For $k>1$ its integral is
\[
 \int_{1/2}^k\log\frac{u}{|u-1|}\dd u
       =k\log k-(k-1)\log(k-1)+\log2.
\]
This follows by using $u\log u+(1-u)\log(1-u)$ below $1$, and
$u\log u-(u-1)\log(u-1)$ above $1$. Both primitives have a continuous
value at $1$. Thus
\[
 \frac1w\int_0^{kd}D(t,b(t))\dd t
 \ge-\frac{d}{\pi w}
       \left\{\frac k2+k\log k-(k-1)\log(k-1)+\log2\right\}.
\]
Using $d/w\le T_0/P_0$ gives the displayed $N_-$. This calculation covers
the diagonal $t=d$ and sources lower than the target without selecting an
extremizing height.

The drift constant in \eqref{eq:c35-driftconstant} has a similarly direct
interpretation. On the small-source part use $S=\tau(t)$ and
$b(t)\ge t(2-t)m/S$. With $p=1-t$ the transformed integrand is
\[
 \frac{t}{t^2+b(t)^2}\frac{\dd t}{\dd S}
 \le\frac{1+p}{2p\{S^2+(1+p)^2m^2\}}.
\]
For $S\le0.035$ we use $p>0.999$ and $1+p>P_0$. Its integral is bounded
by $0.035/[0.999(P_0m)^2]$. For the rest of the near branch,
$h\ge0.37$ and
\[
 (2-t)\{1-\log(t(2-t))\}\ge1-\log t\qquad(0<t\le1).
\]
To verify this inequality, put $v=1-t$, use $-\log t\ge v$ and
$\log(1+v)\le v$, and subtract the right side. Hence
$b(t)/t\ge0.37(1-\log t)$. The substitution $s=1-\log t$ gives the
upper bound
\[
 \int_1^\infty\frac{\dd s}{1+(0.37s)^2}<\frac\pi{2(0.37)}.
\]
On the reflected branch $1<t<2$, simply use $t/(t^2+b^2)\le1/t$.
Its integral is $\log2$. The sum is precisely the right side of
\eqref{eq:c35-driftconstant}.

\subsubsection{The sign of the upper far correction}
We give the algebra behind the height derivative used for the upper barrier.
Let $A=t-d$, $M=A^2+y^2+b^2$ and $z=2yb/M$. Away from its integrable
logarithmic zero, $0\le z<1$ and
\[
 V_b(A,y,b)=\frac{b}{\pi M}\operatorname{atanhc}z.
\]
Subtracting $C_b(t,b)=b/[\pi(t^2+b^2)]$ gives the displayed formula for
$W_b$, because
\[
 (t^2+b^2)-M=2td-d^2-y^2.
\]
Also $\operatorname{atanhc}z\ge1$, by its positive power series. Thus the
correction is increasing in height for $t\ge(d^2+y^2)/(2d)$. Its limiting
value as $b\to\infty$ is zero after subtracting $W_\infty$, and therefore
$D(t,b)\le0$ in that range.

In the intervening range, $b\ge y+t$ implies $b-v\ge t\ge|t-d|$ for
$|v|\le y$. On this interval the function
$z\mapsto\log((t-d)^2+z^2)$ is concave, since its second derivative equals
\[
 \frac{2((t-d)^2-z^2)}{((t-d)^2+z^2)^2}\le0.
\]
Its symmetric mean is at most its value at $b$. Since $t>d$ also implies
$(t-d)^2\le t^2$, formula \eqref{eq:c35-finiteheight} again gives $D\le0$.
The explicit inequalities following \eqref{eq:c35-upper-near} ensure exactly
these height and distance conditions on the whole intervening range.

The near-source upper bound is obtained from Jensen's inequality before any
height optimizer is chosen. The elementary integral used after rescaling is
\[
 \int_0^\infty\log\left(1+\frac{a^2}{u^2}\right)\dd u=\pi a,
       \qquad a\ge0.
\]
For $a>0$, differentiation under the integral gives
$\int_0^\infty2a/(u^2+a^2)\dd u=\pi$; scaling or monotone convergence
fixes the constant at $a=0$. The positive source-height lower bounds supply
the scale $a$ in \eqref{eq:c35-upper-near}. Enlarging the finite source
interval to the positive half-line is an upper bound because the integrand
is nonnegative.

\subsection{The contact transition}
\label{sec:c35-material}

On $[0.03,0.05]$ the function $\rho$ changes rapidly. We use a reference
term which does not depend on the integration-column height $b$. This keeps
the rescaled integrand under uniform control on the whole target interval.

For each sign choose the corresponding endpoint barrier height
$b_\sigma(t)$ only as a reference, and set
\[
 a_\sigma(t)=\frac{t}{t^2+b_\sigma(t)^2},\qquad
 A_\sigma=\int_0^1\{a_\sigma(t)+a_\sigma(2-t)\}\dd t,
 \qquad R_\sigma(t,b)=D(t,b)+\frac d\pi a_\sigma(t).
\]
In the reflected term the height is reflected as well.  The baseline
$a_\sigma$ is independent of the variable height
$b\in[b_-(t),b_+(t)]$. Thus this identity is written before taking any
minimum or maximum in $b$; it does not assume that the chosen endpoint
actually gives the extremum of $D$.

\subsubsection{The one-dimensional baseline}

Changing from the integration variable $t$ to the compactified coordinate
$S$ gives the following integrand on the branch near contact:
\[
 K_\sigma(S)=\frac{1+p(S)}{2p(S)
          \{S^2+(1+p(S))^2h_\sigma(S)^2\}}.
\]
For $S\le0.05$, put $P_* =p(0.05)$ and
$\varphi_\sigma=(S^2+4h_\sigma^2)^{-1}$.  Then
\[
 \varphi_\sigma\le K_\sigma
 \le \frac{2\varphi_\sigma}{P_*(1+P_*)}.
\]
Midpoint quadrature of this rational function has remainder
$\Delta^3[\varphi_\sigma'']/24$ on an integration subinterval of length
$\Delta$. Before differentiating, every breakpoint of the piecewise-affine
function $\rho$ is made an endpoint of such a subinterval. For the remaining
range of $t$, we use $s=-\log t$, and the integrand becomes
\[
 \frac1{1+(b_\sigma/t)^2}
 +\frac{t}{2-t}\frac1{1+(b_\sigma/(2-t))^2}.
\]
These enclosures, including the reflected small-$t$ interval, give
\begin{equation}\label{eq:c35-baselines}
\begin{split}
 A_-&\in[3.25661696289220,\,3.25661711266323],\\
 A_+&\in[2.05795498536009,\,2.05795508591945].
\end{split}
\end{equation}
Both intervals in \eqref{eq:c35-baselines} are outward enclosures.

\subsubsection{Coordinates tied to the target}

Set
\[
 u=t/d,\quad c=y/d=\frac{(1+p)h_\sigma(T)}{T},\quad
 \kappa=d/w=\frac{T}{1+p}.
\]
For $0\le\theta\le1$ define
\[
        h_\theta(S)=(1-\theta)h_-(S)+\theta h_+(S).
\]
The corresponding rescaled quantities are
\begin{align*}
 \ell&=T^{-1}-\log u+\log\frac{1+p}{2-du},\qquad S=\ell^{-1},\\
 B_\theta(T,u)&=\frac{b_\theta(du)}d
 =u(2-du)\ell\,h_\theta(S).
\end{align*}
Formula \eqref{eq:c35-finiteheight} becomes the scale-free expression
\begin{equation}\label{eq:c35-material-R}
 R_\sigma(T,u,\theta)=
 \frac{J_{|u-1|}(B_\theta+c)-J_{|u-1|}(B_\theta-c)}{4\pi c}
 -\frac{\log(u^2+B_\theta^2)}{2\pi}
 +\frac{u}{\pi(u^2+B_\sigma^2)}.
\end{equation}
The value $u=1$ is exactly the point where the horizontal integration point
coincides with the target, since $u=t/d$.

Two equivalent formulas avoid cancellation.  Away from the diagonal set
\[
 M=(u-1)^2+B_\theta^2,\quad D_\theta=u^2+B_\theta^2,\quad
 D_\sigma=u^2+B_\sigma^2,\quad
 x=\frac{1-2u}{D_\theta},\quad
 z=\frac{c^2}{(u-1+iB_\theta)^2}.
\]
When $r=c^2/M\le1/4$, use
\begin{align}
 R_\sigma=\frac1{2\pi}\bigg\{&\log(1+x)-x+\frac1{D_\theta}
 +\frac{2u(B_\theta-B_\sigma)(B_\theta+B_\sigma)}{D_\theta D_\sigma}
 \notag\\[-1mm]
 &+\sum_{n=1}^{16}\frac{(-1)^{n+1}\Re(z^n)}{n(2n+1)}\bigg\}
 +\mathcal R_{16},\qquad
 |\mathcal R_{16}|\le\frac{r^{17}}{2\pi\,17\cdot35(1-r)}.
 \label{eq:c35-material-series}
\end{align}
The difference $B_\theta-B_\sigma$ is evaluated as
$2u(2-du)\ell(\theta-\theta_\sigma)\rho(S)$ before interval substitution,
where $\theta_-=0$ and $\theta_+=1$.  For $|x|\le1/4$,
$\log(1+x)-x$ is evaluated by its degree-32 series with remainder at most
$|x|^{33}/(33(1-|x|))$; outside that range it is evaluated directly.
Only value estimates are used in this transition argument.

Near $u=1$, using one of the two endpoint barrier heights, the mean-value
identity gives
\begin{equation}\label{eq:c35-material-divdiff}
 B_\sigma(T,u)-c(T)
 =(u-1)\int_0^1 b_\sigma'\bigl(d[1+v(u-1)]\bigr)\dd v.
\end{equation}
Here $b_\sigma(t)$ denotes the physical height measured from contact.
The barrier height is continuous even when the path crosses a breakpoint of
$\rho$, although its derivative changes formula there. We therefore bound
the integral using the one-sided derivatives from the adjacent pieces. This
is an identity for the values of the height; we do not differentiate across
the breakpoint. The continuous primitive at zero separation and
$|J_A(z)-J_0(z)|\le\pi A$ give a finite bound when the separation vanishes.

If the derivative with respect to $b$ has a strict sign on the whole box,
the relevant extremum is attained at one endpoint of
$[b_-(t),b_+(t)]$. If no sign is proved, the whole allowed height interval is
kept in the estimate. The partition of the $t$ interval is refined when
necessary, but no part of the integration range or of the allowed height
range is discarded.

\subsubsection{The remaining parts of the $t$ integral}

Let $u_0=2^{-32}$.  Choose a dyadic upper cutoff $U$ such that
$U\ge(1+c_{\max}^2)/2$ and $dU<0.75$ on the full target box.  These
conditions certify the endpoint choice for the far paired integrand by the
sign of its exact height derivative.  With $a=dU$, $v=2-a$, define
\[
 \mathcal M(x)=\frac{x^3}{3}
 \left((1-\log x)^2+\frac{2(1-\log x)}3+\frac29\right).
\]
The errors from the far part, the reflected interaction, and the very small-$t$ part are bounded by
\begin{align}
 E_{\rm far}&=\frac{\kappa}{U}
 \left\{\frac{1+2/(1-2/U)}{2\pi}
          +\frac{c^2}{6\pi(1-1/U)^2}\right\},\label{eq:c35-material-far}\\
 E_{\rm img}&=a\left\{\frac{\kappa\,d}{2\pi v^2}
                  \left(1+\frac{2}{1-2d/v}\right)
                 +\frac{wh_\sigma^2}{6\pi(v-d)^2}\right\}
 \notag\\
 &\hspace{7mm}+\frac{\kappa}{\pi v^3}
       \{0.104\mathcal M(a)+16\mathcal M(d(0.035))\},
 \label{eq:c35-material-image}\\
 E_{\rm near}&=\frac{\kappa\,u_0}{2\pi}
       \left\{\log\left(1+\frac{1+c^2/3}{u_0^2}\right)+2\right\}.
       \label{eq:c35-material-near}
\end{align}
The far estimate is the integrated drift and vertical Taylor bound outside
$U$.  The image estimate uses its uniform positive separation $v-d$ and
the integral of $t^2(1-\log t)^2$, namely $\mathcal M$; the two coefficients
cover the distinct small-$t$ height ranges. The last estimate is
the integral of the logarithmic bound from $0$ to $u_0$.  Their inequalities
are uniform in the same height variable used in the paired functional.
Thus the replacement of the paired inner correction by its near-contact
expression is made with the explicit error in
\eqref{eq:c35-material-image}. The two symmetric contributions are still
kept together; they are not optimized independently.

The normalized reference is also evaluated without a large contact subtraction.
Put
\[
 k(c)=\tfrac12\log(1+c^2)-\tfrac32+\frac c2\arctan(1/c)
                                      +\frac{\arctan c}{2c},
\]
and set
\[
 L(d)=\log2-\frac{(2-d)\log(1-d/2)}d
       \in\log2+[1-d/2,1].
\]
Then \eqref{eq:c35-ref-exact} gives
\begin{equation}\label{eq:c35-material-reference}
 \frac{I_\infty}{w}
 =\frac1{\pi(1+p)}+\frac{\kappa}\pi
     \{L(d)+\log(1+p)-k(c)-2\}-\frac{h_\sigma}{4}+R_\infty,
 \quad 0\le R_\infty\le\frac{wh_\sigma^2}{6\pi(2-d)}.
\end{equation}

Let $A_{\sigma,\rm small}$ denote the baseline on the innermost omitted
source interval, which is bounded as $D$, rather than as $R_\sigma$.
The final whole-box interval is the sum
\begin{align}
 &-\sigma[I_\infty/w]
 +\frac{\sigma\,\kappa}\pi[A_\sigma-A_{\sigma,\rm small}]
 +\kappa\int_{u_0}^{U}
       \min_{0\le\theta\le1}[-\sigma R_\sigma(T,u,\theta)]\dd u
 \notag\\[-1mm]
 &\hspace{40mm}+[-E_{\rm tot},E_{\rm tot}],\qquad
 E_{\rm tot}=E_{\rm far}+E_{\rm img}+E_{\rm near}.
 \label{eq:c35-wholebox}
\end{align}
The small-source baseline is subtracted exactly once.  The brackets indicate
outward intervals, and all factors, including $\kappa$, retain the
whole target box.  The signs in \eqref{eq:c35-wholebox} follow directly from
$D=R_\sigma-d a_\sigma/\pi$.

\begin{proposition}[The direct transition boxes]\label{prop:c35-transition}
The margins satisfy
\[
 \begin{aligned}
 m_\sigma(T)&>1.42981\times10^{-6}&&\quad(0.03\le T\le0.035),\\
 m_\sigma(T)&>1.00695895\times10^{-6}&&\quad(0.035\le T\le0.05).
 \end{aligned}
\]
\end{proposition}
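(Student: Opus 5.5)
The plan is to evaluate the whole-box expression \eqref{eq:c35-wholebox} with outward interval arithmetic on a finite cover of each target interval. For each box $[T_1,T_2]$ the result will be a rigorous lower bound for $m_\sigma(T)$ valid for every $T$ in the box. I cover $[0.03,0.035]$ and $[0.035,0.05]$ by finitely many closed subboxes, taking care that every breakpoint of $\rho$ is an endpoint of a subbox. On each subbox the following target-dependent quantities are enclosed directly as intervals over the whole subbox:
\[
\kappa,\quad c,\quad w,\quad p,\quad h_\sigma(T),\quad d.
\]
No nodal value in $T$ is propagated by a derivative bound, so no interpolation in $T$ is needed. This is why the statement is phrased with separate constants on the two ranges: the mesh of target boxes and the exact value of $\rho$ differ there, and $h_-(T)>0.37$ from \eqref{eq:c35-contact-geometry} is only available from $0.035$ on.

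The individual terms are handled in order.

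\textbf{Reference term.} $I_\infty/w$ is enclosed by \eqref{eq:c35-material-reference}. This uses $L(d)\in\log2+[1-d/2,1]$, the explicit function $k(c)$ evaluated on the interval for $c$, and the one-sided remainder $0\le R_\infty\le wh_\sigma^2/(6\pi(2-d))$. The sign $-\sigma$ in \eqref{eq:c35-wholebox} dictates which endpoint is retained.

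\textbf{Baselines.} $A_\sigma$ comes from \eqref{eq:c35-baselines}. The small-source piece $A_{\sigma,\rm small}$ on $t<du_0$ is enclosed crudely, using only the positive lower bound on $b_\sigma/t$.

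\textbf{Error terms.} Choose the dyadic $U$ so that $U\ge(1+c_{\max}^2)/2$ and $dU<0.75$ on the box. Then $E_{\rm far}$, $E_{\rm img}$ and $E_{\rm near}$ are evaluated from \eqref{eq:c35-material-far}--\eqref{eq:c35-material-near}.

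\textbf{Middle integral.} It remains to bound
\[
\kappa\int_{u_0}^U\min_{0\le\theta\le1}[-\sigma R_\sigma]\,\dd u .
\]
Partition $[u_0,U]$ into cells, geometric near $u_0$ and refined near $u=1$. On each cell, take an interval enclosure of $-\sigma R_\sigma(T,u,\theta)$ over the product of the target box, the $u$-cell and $\theta\in[0,1]$, then multiply its lower endpoint by the cell length. Three formulas are used depending on the cell:
\begin{itemize}
\item Where $r=c^2/M\le1/4$, use the cancellation-free series \eqref{eq:c35-material-series} with its explicit tail $\mathcal R_{16}$. The difference $B_\theta-B_\sigma$ is formed symbolically as $2u(2-du)\ell(\theta-\theta_\sigma)\rho(S)$ before intervals are substituted, so the two barrier heights do not cancel catastrophically.
\item Where $r>1/4$ but the cell is away from $u=1$, evaluate \eqref{eq:c35-material-R} directly.
\item On cells containing or touching $u=1$, rewrite $B_\sigma-c$ through the mean-value identity \eqref{eq:c35-material-divdiff}, using one-sided derivative bounds of $b_\sigma$ on each side of any breakpoint of $\rho$. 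Combine this with $|J_A-J_0|\le\pi A$ so the enclosure stays finite at zero separation.
\end{itemize}
If $\partial_b\cF$ can be certified to have a strict sign on a cell, I replace the $\theta$-minimum by the corresponding endpoint. Otherwise the full $\theta$ interval is kept, possibly bisected. Nothing is discarded.

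\textbf{Main obstacle.} The hard part is the diagonal cells around $u=1$ when $T\in[0.035,0.05]$, where $\rho$ varies fastest. There the enclosure of $R_\sigma$ blows up logarithmically unless the cells are very small. Since the target margin is only about $10^{-6}$, the accumulated overestimation from interval dependency in $\theta$, $u$ and $T$ must be held below that level. The first remedy I would try is adaptive bisection in the $T$-direction near the breakpoints of $\rho$, together with bisection of the $u$-cells adjacent to $1$, until the summed lower bound exceeds the stated thresholds. If this is too costly, I would split $\theta$ as well and use the monotone endpoint selection wherever the $b$-derivative sign can be certified.

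Finally, taking the minimum over all subboxes of the certified lower bounds gives the two constants $1.42981\times10^{-6}$ and $1.00695895\times10^{-6}$ on the respective ranges.
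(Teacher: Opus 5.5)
Your route is the paper's own. Its proof evaluates \eqref{eq:c35-wholebox} with outward value enclosures on a finite closed cover, checking the cutoff conditions on $U$, the $b$-derivative signs used to select endpoints, and complete coverage of the source range. One step fails as you state it, however: the ``crude'' enclosure of $A_{\sigma,\rm small}=\int_0^{du_0}a_\sigma(t)\,\dd t$ from a lower bound on $b_\sigma/t$.

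This piece is not negligible. Since $b_\sigma(t)/t\approx 2h_\sigma(S)(1-\log 2t)$, the integrand behaves like $\{4h_\sigma^2\,t(1-\log 2t)^2\}^{-1}$. Its integral over $(0,\varepsilon)$ decays only like $1/\log(1/\varepsilon)$, not like $\varepsilon$. In the compactified variable, $A_{\sigma,\rm small}=\int_0^{S_0}K_\sigma(S)\,\dd S\approx\int_0^{S_0}\dd S/(S^2+4h_\sigma(S)^2)$, with $S_0=S(du_0)\approx0.018$--$0.024$ on $[0.03,0.05]$. There $\rho=1/2$, so $h_-=P_5/Q_5-\tfrac12\approx0.07$--$0.12$. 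This gives $A_{-,\rm small}\approx0.45$--$0.7$ and $(\kappa/\pi)A_{-,\rm small}\approx2$--$6\times10^{-3}$. That is three orders of magnitude above the margin you are certifying; even $(\kappa/\pi)A_{+,\rm small}$ is about $2$--$4\times10^{-5}$.

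The direction is also wrong for $\sigma=-1$. A lower bound on $b_-/t$ gives only an upper bound on $A_{-,\rm small}$. But this term enters \eqref{eq:c35-wholebox} as $+(\kappa/\pi)A_{-,\rm small}$, so a lower bound is what you need. Falling back on $A_{-,\rm small}\ge0$ destroys the estimate.

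To repair this, enclose $A_{\sigma,\rm small}$ two-sidedly and sharply, for $\sigma=-1$ to absolute accuracy around $10^{-5}$ or better. You can do this as the integral of $K_\sigma$ over $[0,S_0(T)]$ with the exact rational $h_\sigma$, just as for the full baselines \eqref{eq:c35-baselines}. Keep the target boxes narrow enough that its $T$-variation is not lost to interval dependency; that variation is roughly $K_\sigma(S_0)S_0^2/T^2\approx12$--$14$ per unit $T$ before the factor $\kappa/\pi$.

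The same large $\theta$-independent term $u/(\pi(u^2+B_\sigma^2))$ sits inside $R_\sigma$ just above $u_0$; for $\sigma=-1$ it is of size $10^7$ at $u=u_0$. So lower-endpoint rectangles there need extremely fine geometric cells. Alternatively, pull that term out of the minimum and integrate it exactly against the baseline.

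Two smaller points. A different cover will not reproduce the constants $1.42981\times10^{-6}$ and $1.00695895\times10^{-6}$; what you must show is that your certified minima exceed them. And $\rho$ varies fastest on $[0.03,0.035]$, where it falls from $0.5$ to $0.01$, not on $[0.035,0.05]$, so that is where your refinement effort should concentrate.
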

\begin{proof}
Apply \eqref{eq:c35-wholebox} on the finite closed cover used in the
verification. On every box, the cutoff conditions, the derivative signs in
$b$, and the fact that the complete $t$ interval is covered are checked
rigorously. Adding the resulting interval bounds proves the stated inequality
on the whole transition region. The detailed subdivision belongs to the
computational data and is not needed to follow the argument here.
\end{proof}

\subsection{The regular region: from fixed targets to the whole interval}
\label{sec:c35-regular}

We now explain the argument on $0.05\le T\le0.9$.  The object to be bounded
is the margin $m_\sigma(T)$ defined above.  For a fixed target value $T$, it
is obtained by integrating in the variable $t$ after taking the minimum, or
the maximum with the appropriate sign, over every height allowed by the two
barriers.  The difficulty is to prove the lower bound for every $T$, not only
at a finite set of target values.

We divide the $T$ interval into closed cells $[a,b]$.  On each cell we need
three ingredients:
\begin{enumerate}[label=(\roman*)]
\item rigorous lower bounds for the margin at $T=a$ and $T=b$;
\item an upper bound for the second derivative with respect to $T$ of every
      smooth expression that can realize the minimum;
\item a bound for the error made when, near a breakpoint of the piecewise
      affine function $\rho$, we temporarily replace the barrier by one
      smooth formula.
\end{enumerate}
The next lemma shows how these three pieces give a lower bound throughout the
whole target cell.

\subsubsection{Interpolation between two target values}

\begin{lemma}[Interpolation from endpoint bounds]\label{prof:lem:peano}
Let $q_\theta$ be a family of smooth functions on $[a,b]$, with the same
parameter set for every $T\in[a,b]$, and suppose
\[
        \partial_T^2 q_\theta(T)\le M
\]
for every retained value of $\theta$.  If
\[
        u(T)=\inf_\theta q_\theta(T),
\]
then
\[
 u(T)\ge\frac{b-T}{b-a}u(a)+\frac{T-a}{b-a}u(b)
            -\frac M2(T-a)(b-T).
\]
If a function $m$ satisfies $|m-u|\le E$ on the cell, then
\begin{equation}\label{eq:c35-peano-error}
 m(T)\ge\min\{m(a),m(b)\}
       -\frac{\max\{M,0\}}8(b-a)^2-2E.
\end{equation}
\end{lemma}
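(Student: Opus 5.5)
For each retained $\theta$, the function $r_\theta(T)=q_\theta(T)-\frac M2(T-a)(T-b)$ satisfies $\partial_T^2 r_\theta=\partial_T^2q_\theta-M\le0$. It is therefore concave on $[a,b]$, and hence lies above its chord:
\[
 r_\theta(T)\ge\frac{b-T}{b-a}r_\theta(a)+\frac{T-a}{b-a}r_\theta(b).
\]
Since $r_\theta(a)=q_\theta(a)\ge u(a)$ and $r_\theta(b)=q_\theta(b)\ge u(b)$, and the chord weights are nonnegative, we obtain
\[
 q_\theta(T)\ge\frac{b-T}{b-a}u(a)+\frac{T-a}{b-a}u(b)-\frac M2(T-a)(b-T).
\]
Here we used $(T-a)(T-b)=-(T-a)(b-T)$. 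The right-hand side does not depend on $\theta$, so taking the infimum over $\theta$ gives the first inequality. The only point to watch is that the parameter set is common to all $T$. Otherwise the infimum at $a$ or $b$ could be taken over a different family and the bound $q_\theta(a)\ge u(a)$ would fail. This is exactly the hypothesis stated, and no attainment of the infimum is needed.

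For \eqref{eq:c35-peano-error} we first drop the chord to its minimum. The convex combination is at least $\min\{u(a),u(b)\}$. We also have $(T-a)(b-T)\le(b-a)^2/4$, and $-\frac M2(T-a)(b-T)\ge-\frac{\max\{M,0\}}2(T-a)(b-T)$ since the product is nonnegative. Together these give
\[
 u(T)\ge\min\{u(a),u(b)\}-\frac{\max\{M,0\}}8(b-a)^2.
\]
Finally we transfer to $m$. At the endpoints, $u(a)\ge m(a)-E$ and $u(b)\ge m(b)-E$, so $\min\{u(a),u(b)\}\ge\min\{m(a),m(b)\}-E$. At the interior point, $m(T)\ge u(T)-E$. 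Combining these yields the loss $2E$ and proves \eqref{eq:c35-peano-error}.

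There is no real obstacle here; the only step requiring care is the uniformity of the parameter set noted above.
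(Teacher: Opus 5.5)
Your proof is correct and follows the same argument as the paper. The paper subtracts $MT^2/2$ instead of $\frac M2(T-a)(T-b)$; the two differ by an affine function, so the concavity and chord step, the infimum over the common parameter set, and the $2E$ accounting ($E$ at the interior point plus $E$ at the endpoints) are identical. Your normalization makes $r_\theta$ agree with $q_\theta$ at $a$ and $b$, which only makes the chord computation slightly more explicit.
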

\begin{proof}
For each $\theta$, the function $q_\theta(T)-MT^2/2$ is concave.  Its chord
inequality therefore holds between $a$ and $b$.  Taking the infimum in
$\theta$ gives the first estimate.  Replacing $u$ by $m$ costs at most $E$
at the interior point and at most one interpolated copy of $E$ at the two
endpoints, which gives the term $2E$.
\end{proof}

For each fixed integration point $t$, the allowed height is
$b\in[b_-(t),b_+(t)]$.  If the sign of the derivative with respect to $b$ is
known on the whole box, the minimum occurs at one endpoint of this interval.
If the sign is not known, all allowed heights are kept.  Thus we never
differentiate a height which has been chosen by a minimization procedure.
The only two places requiring special care are when the integration point
approaches the target point and when the formula for the barrier changes at
a breakpoint of $\rho$.

\subsubsection{When the integration point approaches the target point}

Write the target as $x=1-d(T)$ and the positive horizontal integration
variable as $\xi=1-t$.  The direct Green term has its logarithmic singularity
when
\[
        t=d(T),
\]
because then the horizontal integration point coincides with the target.
We choose a fixed $t$ interval which contains $d(T)$ for every target
$T\in[a,b]$.  On either side of $t=d(T)$ we write
\[
        t=d(T)+\eta\,\delta(T)r,
        \qquad 0\le r\le1,\qquad \eta\in\{-1,1\},
\]
where $\delta(T)$ is the distance from $d(T)$ to the corresponding endpoint
of this fixed interval.

On an interval where one formula for the barrier is smooth, denote that
smooth height by $\widetilde b(t)$.  It agrees with the target height at
$t=d(T)$.  Hence the fundamental theorem of calculus gives
\begin{equation}\label{eq:c35-physical-divdiff}
 \beta(T,r)=-\eta\int_0^1
      \widetilde b'\bigl(d(T)+v(t-d(T))\bigr)\,\dd v,
 \qquad
 y-\widetilde b(t)=\delta(T)r\,\beta(T,r).
\end{equation}
This identity is the reason for introducing $r$: it keeps the relation
between the horizontal separation $t-d(T)$ and the vertical separation
$y-\widetilde b(t)$ exactly, rather than estimating two small quantities
independently.

Let $A=\delta r$.  Substituting the second identity above in the primitive
$J_A$ gives
\begin{equation}\label{prof:eq:Jregular}
 J_A(A\beta)=A\{2\beta\log\delta+R(\beta)\}
                   +(2\delta\beta)r\log r,
 \qquad
 R(\beta)=\beta\log(1+\beta^2)-2\beta+2\arctan\beta.
\end{equation}
Thus the only non-smooth factor left at $r=0$ is $r\log r$, which is
integrable.  All other coefficients, and their first two derivatives with
respect to the target variable $T$, remain bounded on the fixed cell.  This
justifies differentiating the central integral twice with respect to $T$ and
produces the second-derivative bound required in
Lemma~\ref{prof:lem:peano}.

\subsubsection{When the barrier formula changes}

The function $\rho$ used in the definition of $f_\pm$ is continuous and
piecewise affine in $T$.  Let $T_j$ be one of the finitely many points where
its affine formula changes.  In the integration variable $t$, the
corresponding point $t_j$ is defined by
\[
        T(1-t_j)=T_j.
\]
The physical barrier height
\[
        b_\sigma(t)=f_\sigma(1-t)
\]
is continuous at $t_j$, but its derivative changes formula there.

If one of the fixed integration intervals crosses $t_j$, we use on that
interval a smooth continuation $\widetilde b$ of one of the two neighboring
formulas.  The true barrier and this continuation agree at $t_j$.  Since
their one-sided derivatives are bounded, there is a constant $K$ such that
\[
        |b_\sigma(t)-\widetilde b(t)|\le K|t-t_j|
\]
on the part where the two formulas differ.

The dependence of the Green integral on the height has only a logarithmic
singularity when $t$ approaches $d(T)$.  More precisely, its singular part
is bounded by an expression of the form
\[
 C_1\log\left(1+\frac{C_2}{|t-d(T)|^2}\right).
\]
Multiplying this by the preceding bound for
$|b_\sigma-\widetilde b|$ gives an integrable function.  The elementary
identity
\begin{equation}\label{eq:c35-extension-integral}
 \int_0^L u\log(1+C/u^2)\,\dd u
 =\frac12\left\{L^2\log(1+C/L^2)
                         +C\log(1+L^2/C)\right\}
\end{equation}
therefore gives an explicit finite bound $E$ for the error caused by using
$\widetilde b$ instead of the actual piecewise-defined barrier.  This is the
error $E$ appearing in \eqref{eq:c35-peano-error}.  No interval around a
breakpoint is omitted.

\subsubsection{The part separated from the target point}

Away from $t=d(T)$ the horizontal separation is bounded away from zero, so
the Green integrand and its first two derivatives with respect to $T$ are
regular.  The following convergent expansion is useful because it keeps the
cancellation between the Green term and the speed term before any interval
estimate is made:
\begin{align}
 V(A,y,b)-C(p,b)=\frac1{2\pi}\bigg\{&
 \log\left(1+\frac{b^2d(2p-d)}{A^2(p^2+b^2)}\right)
 \notag\\[-1mm]
 &+\sum_{n\ge1}\frac{(-1)^{n+1}}{n(2n+1)}
          \left(\Re\left(\frac{y^2}{(A+ib)^2}\right)^n-r^n\right)
 \bigg\},\qquad A=|p-d|.
 \label{eq:c35-far-series}
\end{align}
On the region where this formula is used the series converges absolutely and
uniformly, together with its first two target derivatives.  Truncating the
series and bounding the remaining geometric tail therefore gives rigorous
bounds for the value and for the two target derivatives.  These are the
bounds used in Lemma~\ref{prof:lem:peano}.  The finite truncation details are
part of the computational verification and are not needed for the analytic
argument.

\subsubsection{First integrate in $t$, then fill the target cell}

There are two different one-dimensional steps, and it is useful to keep them
separate.

First fix a target value $T$.  We must bound the complete integral in the
variable $t$.  On a $t$ interval where one smooth expression is being used,
Taylor's theorem at its midpoint $c$ gives
\[
 \int_{c-\Delta/2}^{c+\Delta/2}H(t)\,\dd t
 =\Delta H(c)+R,
 \qquad
 |R|\le\frac{\Delta^3}{24}\sup|H''|.
\]
If several heights remain possible, define explicitly
\[
        e(t)=\min_{b\in[b_-(t),b_+(t)]} H(t,b)
\]
(with the corresponding sign change for the upper barrier).  If every
retained function has second derivative at most $M_t$, the same chord
argument as in Lemma~\ref{prof:lem:peano} gives
\[
 \int_a^b e(t)\,\dd t\ge
 \frac{b-a}{2}\{e(a)+e(b)\}
 -\frac{(M_t)_+}{12}(b-a)^3.
\]
Intervals containing $t=d(T)$ or a breakpoint of $\rho$ are treated by the
continuous primitives and the error estimate above; they are not discarded.
Adding all $t$ intervals gives a rigorous lower bound for the full margin at
that fixed target value.

Second, once lower bounds have been obtained at the two endpoints $T=a$ and
$T=b$ of a target cell, we use the second-derivative bound in $T$ and
Lemma~\ref{prof:lem:peano}.  If the temporary smooth continuation of the
barrier changes the margin by at most $E$, the resulting bound is exactly
\[
 m_\sigma(T)\ge
 \min\{m_\sigma(a),m_\sigma(b)\}
 -\frac{M_+}{8}(b-a)^2-2E,
 \qquad a\le T\le b.
\]
Thus the calculation at finitely many target values, together with the
second-derivative and barrier-replacement bounds just proved, controls every
point of the whole target interval.

Applying this argument to the finite cover used in the verification gives
\begin{equation}\label{eq:c35-regular-margins}
        m_\sigma(T)>10^{-6}
        \qquad (0.05\le T\le0.9).
\end{equation}
The detailed subdivisions and the sharper numerical margins are recorded in
the computational data; only the uniform positive bound above is used in the
proof.

\subsection{The symmetry-axis endpoint}
\label{sec:c35-high}

Use the physical target coordinate $p$, with
\[
 T(p)=\frac1{1-\log(1-p^2)},\qquad 0\le p\le\frac{13}{40},
 \qquad\widehat m_\sigma(p)=m_\sigma(T(p)).
\]
This includes $p=0$, which is the target $T=1$.  The fixed central physical
source cap is $[-3/8,3/8]$.  Every target remains at least $1/20$ from its
endpoints.  Also
\[
 T(z)\ge1-z^2\ge55/64>4/5\qquad(|z|\le3/8).
\]
The entire central source and every divided-difference path therefore stay
inside the last affine radius piece.  The physical version of
\eqref{eq:c35-physical-divdiff} is an exact identity there, without any
source extension error.  A strict sign test over the full central height
band selects the endpoint before regularization.  The source outside the
cap remains separated and includes every unresolved height slice and the
omitted-source tail estimate.

The regularized coefficients and their first two physical-target
derivatives are uniformly bounded at $p=0$.  On $[0,3/40]$ there are 15 cells per sign of length $1/200$;
on $[3/40,13/40]$ there are 100 cells per sign of length $1/400$.
The nodal margins exceed $2.25\times10^{-5}$.  The one-sided upper curvature
bounds are, respectively, less than $3.752584$ and $8.637613$.  Applying
Lemma~\ref{prof:lem:peano} to the physical-target envelope gives the safe
common bound
\begin{equation}\label{eq:c35-high-bound}
 \widehat m_\sigma(p)>1.07\times10^{-5}
               \qquad(0\le p\le13/40).
\end{equation}

There is an exact overlap with the regular target interval.  For
$a=169/1600$, rational arithmetic gives
$a+a^2/2+a^3/3>1/9$, and hence
\[
 -\log(1431/1600)>1/9,\qquad T(13/40)<9/10.
\]
Thus \eqref{eq:c35-high-bound} covers $[0.9,1]$, including the endpoint,
and overlaps \eqref{eq:c35-regular-margins}.

\begin{proof}[Proof of Proposition~\ref{prof:thm:barriers}]
Proposition~\ref{prof:prop:contact} gives the claim on $(0,0.03]$.
Proposition~\ref{prop:c35-transition} gives it on $[0.03,0.05]$.
The bounds \eqref{eq:c35-regular-margins} cover $[0.05,0.9]$,
and \eqref{eq:c35-high-bound} covers $[0.9,1]$ with overlap.
Every displayed lower bound is strictly greater than $10^{-6}$.
The intervals are closed at their finite junctions. This proves
\eqref{eq:c35-global-profile} for every $0<T\le1$.
\end{proof}

\section{The linearized operator and its weighted space}\label{sec:linearization}
From now on $f$ is an exact profile already known to lie in the explicit
barrier band. The linearized argument no longer uses the fixed-point
construction itself.

A normal displacement changes the free-boundary equation in two ways. The
observation point moves, producing a local term, and the vorticity domain
changes, producing a nonlocal Green-potential term. Keeping these effects
separate gives the operator below and, by polarization, its quadratic form.

The coefficient of the local term is negative on the open arc but vanishes at
contact. We therefore use its negative as the weight in the norm instead of
dividing by it. After conjugation the local term is the identity and the
nonlocal term is compact and self-adjoint. Nondegeneracy becomes a spectral
question at the fixed level one.

Fix an exact profile $f\in\mathcal K$ and write $\Gamma=\Gamma_f$,
$q=q_f$, $L=L_f$. The travelling speed is $W=c(f)$.

\subsection{The shape derivative}
Let $\Xi_\varepsilon(X)=X+\varepsilon V(X)+o(\varepsilon)$ deform the upper
patch, and write $h=V\cdot\nu$ on the free arc. The tangential part of $V$
changes only the parametrization. At a fixed observation point, the transport
formula gives
\begin{equation}\label{eq:hadamard-domain}
\left.\frac{d}{d\varepsilon}\right|_0\phi_{\Omega_\varepsilon}(X)
 =\int_\Gamma G_H(X,Y)h(Y)\,\dd s_Y=:(Sh)(X).
\end{equation}
This remains the domain derivative when the contact points move. The remaining
boundary segment lies on $Y_2=0$, where $G_H(X,Y)=0$, and contributes nothing.
The moving contact endpoints have zero boundary measure.

The boundary equation is evaluated at
$X_\varepsilon=X+\varepsilon h(X)\nu(X)+o(\varepsilon)$. At fixed speed,
\begin{align}
 \left.\frac{d}{d\varepsilon}\right|_0
       \psi_\varepsilon(X_\varepsilon)
 &=\nabla\psi(X)\cdot h(X)\nu(X)
   +\int_\Gamma G_H(X,Y)h(Y)\,\dd s_Y\notag\\
 &=q(X)h(X)+(Sh)(X).\label{eq:trace-plus-domain}
\end{align}
Since $\psi$ is constant on $\Gamma$, its tangential derivative vanishes,
so $\nabla\psi=q\nu$. We have obtained
\begin{equation}\label{eq:Ldef-geometric}
Lh=qh+Sh,\qquad Sh(X)=\int_\Gamma G_H(X,Y)h(Y)\,\dd s_Y.
\end{equation}
If the speed and boundary level are also varied, then
\begin{equation}\label{eq:full-linearization}
 D(\phi_\Gamma-Wx_2-\lambda)[h,\dot W,\dot\lambda]
      =Lh-\dot W x_2-\dot\lambda.
\end{equation}
The touching dipole has $\lambda=0$. Formula \eqref{eq:full-linearization}
will be used with varying speed for the dilation identity.

\subsection{Graph variables}\label{subsec:graph-normal-cancellation}
Write $X(x)=(x,f(x))$ and vary $f$ by $f+\varepsilon\eta$. Then
\[
 \nu=\frac{(-f',1)}{\sqrt{1+f'^2}},\qquad
 h=\frac{\eta}{\sqrt{1+f'^2}},\qquad
 \dd s=\sqrt{1+f'^2}\,\dd x.
\]
In particular,
\begin{equation}\label{eq:vertical-to-normal}
h\,\dd s=\eta\,\dd x.
\end{equation}
Set
\begin{equation}\label{eq:w-def-linearization}
 w_f(x)=-\psi_{f,y}(x,f(x)).
\end{equation}
Differentiating $\psi_f(x,f(x))=0$ gives $\psi_{f,x}+f'\psi_{f,y}=0$, hence
\begin{equation}\label{eq:q-w-identity-linearization}
 -q_f=w_f\sqrt{1+f'^2}.
\end{equation}
The local and nonlocal parts of the quadratic form therefore satisfy
\begin{align}
 \int_\Gamma(-q)h^2\,\dd s
     &=\int_{-1}^1 w_f(x)\eta(x)^2\,\dd x,\label{eq:local-form-x}\\
 \iint_{\Gamma\times\Gamma}G_Hh(X)h(Y)\,\dd s_X\dd s_Y
     &=\iint_{(-1,1)^2}G_H(X(x),X(\xi))
                    \eta(x)\eta(\xi)\,\dd x\dd\xi.
                    \label{eq:nonlocal-form-x}
\end{align}
Thus no separate arc-length or slope factor remains in the graph quadratic
form:
\begin{equation}\label{eq:quadratic-vertical}
\mathcal Q_f[h]=\int_{-1}^1w_f\eta^2\,\dd x
  -\iint_{(-1,1)^2}G_H(X(x),X(\xi))\eta(x)\eta(\xi)\,\dd x\dd\xi.
\end{equation}
The identity used later for the spectral measure is
\begin{equation}\label{eq:intrinsic-invariant-measure}
\frac{\dd s}{-q_f}=\frac{\dd x}{w_f(x)}.
\end{equation}

\subsection{Contact geometry and the logarithmic height}
\label{sec:contact-theory}
Let $r=1-x$ be the horizontal distance to the right contact. The explicit
height band gives
\begin{equation}\label{eq:contact-shape-scale}
 c\,r\log(e/r)\le f(x)\le C\,r\log(e/r)
                \qquad(0<r<r_0)
\end{equation}
for fixed positive constants. To obtain this, write
$1-x^2=r(2-r)$ in \eqref{prof:eq:T-main}; the regularized heights of the two
barriers are bounded above and below by positive constants. This argument
is a height estimate only. It does not differentiate an arbitrary graph in
the order interval.

For an exact profile the coefficient $q_f$ is negative on the open free
arc and tends to zero at contact. In fact, the continuity of the potential
gradient proved above gives $\partial_x\phi_f(1,0)=0$, since
$\phi_f(x,0)=0$, and $\partial_y\phi_f(1,0)=W$. Thus
$\nabla\psi_f\to0$ at the contact. In particular no positive lower bound
for $-q_f$ holds on the closed free arc. We use the exact weighted measure
rather than assigning an unproved rate to this vanishing coefficient.

The two useful contact quantities are the Jacobi measure and logarithmic
height:
\begin{equation}\label{eq:contact-mass-coordinate}
 \dd\mu_f(x)=\frac{\dd x}{w_f(x)},\qquad t=-\log f(x).
\end{equation}
They are not the same coordinate. On the positive half-arc the exact profile
is strictly decreasing, so logarithmic height parametrizes that arc away
from its central point. Write $x=x_f(t)$ and
$X_f(t)=(x_f(t),e^{-t})$. The boundary equation implies
\[
 \psi_x=-f'\psi_y=f'w_f,\qquad
 \frac{\dd t}{\dd x}=-\frac{f'}f.
\]
Consequently the density of the Jacobi measure in logarithmic height is
\begin{equation}\label{eq:contact-density-identity}
 J_f(t):=\frac{\dd\mu_f}{\dd t}
     =\frac{f(x_f(t))}{-\psi_x(X_f(t))}
     =\frac1{-\Phi_x(f;X_f(t))}.
\end{equation}
The last expression uses $\Phi_x=\phi_x/f$ and $\psi_x=\phi_x$.
The prime in $f'$ is a physical horizontal derivative. Formula
\eqref{eq:contact-density-identity} follows from the exact boundary equation,
not from the two height inequalities alone.

\subsubsection{A positive integral for the contact density}
The positivity entering this formula can be made explicit. Let $a(\eta)>0$
be the positive horizontal endpoint of the source slice at height $\eta$.
Then
\[
 \Omega_f=\{(\xi,\eta):0<\eta<f(0),\ -a(\eta)<\xi<a(\eta)\}.
\]
Since $\partial_xG_H=-\partial_\xi G_H$, integration of the horizontal
source variable gives
\[
 -\phi_x(x,y)=\int_0^{f(0)}
 \{G_H((x,y),(a(\eta),\eta))-
                 G_H((x,y),(-a(\eta),\eta))\}\dd\eta.
\]
For $x,y>0$ the integrand equals
\begin{equation}\label{eq:contact-positive-log}
 \frac1{4\pi}\log\left(
 1+\frac{16xya(\eta)\eta}
 {((x-a(\eta))^2+(y-\eta)^2)
  ((x+a(\eta))^2+(y+\eta)^2)}\right)>0.
\end{equation}
The identity is obtained by multiplying the two Green ratios and subtracting
their cross-products. At a logarithmic zero it is interpreted by integration.
One may restrict the range of integration to obtain a lower bound, because
every retained integrand is nonnegative.

The quantitative estimate needed later is
\begin{equation}\label{eq:contact-density-input}
 -\Phi_x(f;X_f(t))\ge\frac{0.82}{\pi}t,
 \qquad t\ge9,
\end{equation}
uniformly for every exact profile in the barrier class. Consequently
\[
        0<J_f(t)\le\frac{\pi}{0.82\,t},\qquad t\ge9.
\]
For $9\le t\le100$ this lower bound is obtained by rigorous interval
enclosures of the positive integral in \eqref{eq:contact-positive-log}. The
remaining range $t\ge100$ is proved analytically in the next subsection, so
there is no numerical extrapolation toward the contact point.

\subsubsection{Continuation of the density estimate to arbitrary depth}
The density estimate is computed only on a finite interval of logarithmic
heights. The remaining range follows analytically, so no mesh accumulates at
contact.

Let $a(\eta)$ be the positive horizontal endpoint of the source slice of
height $\eta$.  For small integration heights the barrier geometry gives
\begin{equation}\label{eq:far-gap-linear}
 1-a(\eta)\le k\eta
\end{equation}
with a fixed $k<1$.  Put $b=e^{-9}$, let the target height be $y=e^{-t}$,
and retain in the positive integral \eqref{eq:contact-positive-log} only
integration heights $y\le\eta\le b$.  Since the inverse profile is monotone,
$x_f(t)=a(y)\ge a(\eta)$.  Hence
\[
 x_f(t),\ a(\eta)\ge 1-kb,\qquad
 |x_f(t)-a(\eta)|\le 1-a(\eta)\le k\eta,
 \qquad |y-\eta|\le\eta.
\]
The first denominator in \eqref{eq:contact-positive-log} is therefore at
most $(1+k^2)\eta^2$.  The reflected denominator is at most
$4(1+b^2)$, while the numerator is at least
$16(1-kb)^2y\eta$.  Consequently the fraction inside the logarithm is
bounded below by
\[
 C\frac y\eta,
 \qquad
 C=\frac{4(1-kb)^2}{(1+k^2)(1+b^2)}.
\]
This is the link between the inverse-profile geometry and the physical
density; no extrapolation from a finite table is involved.  We obtain
\[
 -\Phi_x(f;X_f(t))\ge \frac1{4\pi}\int_y^b
      \log\left(1+C\frac y\eta\right)\dd\eta.
\]
Using $\log(1+z)\ge z/(1+z)$ and integrating explicitly,
\[
 \int_y^b \frac{Cy}{\eta+Cy}\,\dd\eta
 =Cy\log\frac{b+Cy}{(1+C)y}.
\]
Since $b=e^{-9}$ and $y=e^{-t}$, this gives
\begin{equation}\label{eq:far-density-analytic}
 \frac{\pi[-\Phi_x(f;X_f(t))]}{t}
 \ge \frac C4\left(1-\frac{9+\log(1+C)}{t}\right),\qquad t\ge100.
\end{equation}
The right-hand side increases with $t$. On the range of values of $C$ used
here it is also increasing in $C$. The verified lower bound for the geometric
constant makes the right-hand side larger than $0.82$ at $t=100$, and hence
for every $t\ge100$. This completes the proof of
\eqref{eq:contact-density-input} at arbitrary depth.

\subsubsection{The kernel after the unitary change}
If $v\in L^2(\dd\mu_f)$, define
$\widehat v(t)=J_f(t)^{1/2}v(x_f(t))$ on a contact chart. Then
\[
 \int |v|^2\dd\mu_f=\int|\widehat v(t)|^2\dd t.
\]
The corresponding kernel in $L^2(\dd t)$ is
\[
 \widehat G_f(t,s)=\sqrt{J_f(t)J_f(s)}\,
                      G_H(X_f(t),X_f(s)).
\]
For the same contact, dropping the horizontal separation from the denominator
in the Green ratio gives
\[
 0\le G_H(X_f(t),X_f(s))
 \le\frac1{2\pi}\log\frac{e^{-t}+e^{-s}}{|e^{-t}-e^{-s}|}
 =\frac1{2\pi}\log\coth\frac{|t-s|}{2}.
\]
Thus \eqref{eq:contact-density-input} gives the explicit majorant
\begin{equation}\label{eq:model-contact-kernel-explained}
 0\le\widehat G_f(t,s)\le\frac1{c_*}K_0(t,s),\qquad
 K_0(t,s)=\frac1{2\sqrt{ts}}\log\coth\frac{|t-s|}{2}.
\end{equation}
The prefactor comes from the measure transformation. In particular,
logarithmic height is not substituted for accumulated Jacobi mass without
its density. The interaction between opposite contacts has positive
horizontal separation and is bounded separately below.

\begin{lemma}\label{lem:model-tail}
For $S>0$, the kernel $K_0$ is square integrable on $(S,\infty)^2$ and
\[
 \|K_0\|_{L^2(S,\infty)\to L^2(S,\infty)}\le\frac{\pi^2}{4S}.
\]
Moreover, with $\ell(r)=\log\coth(r/2)$,
\begin{equation}\label{eq:logcoth-moment}
 \int_0^\infty r\ell(r)^2\dd r<2.
\end{equation}
\end{lemma}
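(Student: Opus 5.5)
The plan is to expand $\ell(r)=\log\coth(r/2)$ as an exponential series, which makes all three claims explicit. For $r>0$,
\[
 \ell(r)=\log\frac{1+e^{-r}}{1-e^{-r}}
 =2\sum_{k\ \mathrm{odd}}\frac{e^{-kr}}{k},
\]
with every term positive. This series gives the integrability of $\ell$ near $0$, where $\ell(r)\sim\log(1/r)$, and near infinity, where $\ell(r)\sim2e^{-r}$. It also gives closed forms for the moments that appear below.

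\emph{Operator norm.} For $t,s>S$ we have $(ts)^{-1/2}\le S^{-1}$. Hence $0\le K_0(t,s)\le\frac1{2S}\ell(|t-s|)$, and the right-hand side is a convolution kernel. Because $K_0$ is nonnegative, its operator norm on $L^2(S,\infty)$ is bounded by that of the majorant. The majorant is a convolution operator, so Young's inequality (or Schur's test with the constant weight) bounds its norm by the $L^1(\R)$ norm of the kernel:
\[
 \|K_0\|\le\frac1{2S}\int_{\R}\ell(|r|)\dd r=\frac1S\int_0^\infty\ell(r)\dd r .
\]
Integrating the series term by term, which is allowed by monotone convergence, gives
\[
 \int_0^\infty\ell=2\sum_{k\ \mathrm{odd}}\frac1{k^2}=2\cdot\frac{\pi^2}8=\frac{\pi^2}4 .
\]
This yields exactly the bound $\pi^2/(4S)$.

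\emph{Square integrability.} By symmetry it suffices to treat $s>t$. Write $s=t+r$ and integrate in $t$ first:
\[
 \int_S^\infty\frac{\dd t}{t(t+r)}=\frac1r\log\Bigl(1+\frac rS\Bigr)\le\frac1S .
\]
Therefore
\[
 \iint_{(S,\infty)^2}K_0^2\le\frac1{2S}\int_0^\infty\ell(r)^2\dd r .
\]
The last integral is finite, because $\ell^2$ has only a $\log^2$ singularity at $0$ and decays like $4e^{-2r}$ at infinity.

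\emph{The moment \eqref{eq:logcoth-moment}.} Squaring the series and using $\int_0^\infty re^{-ar}\dd r=a^{-2}$ gives
\[
 \int_0^\infty r\ell(r)^2\dd r=4\sum_{j,k\ \mathrm{odd}}\frac1{jk(j+k)^2}.
\]
Since $(j+k)^2\ge4jk$, the double sum is at most $\frac14\bigl(\sum_{k\ \mathrm{odd}}k^{-2}\bigr)^2=\frac14(\pi^2/8)^2$. Consequently the integral is at most $(\pi^2/8)^2\approx1.52<2$.

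The only point needing care is justifying the term-by-term manipulations. All terms are nonnegative, so Tonelli's theorem suffices, and I do not expect a genuine obstacle.
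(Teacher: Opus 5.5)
Your proposal is correct and follows essentially the paper's proof: the odd-exponential series for $\ell$, a Schur/convolution bound giving $\pi^2/(4S)$, the same Hilbert--Schmidt computation via $\int_S^\infty\frac{\dd t}{t(t+r)}\le 1/S$, and squaring the series for the moment. The only differences are minor. You bound $(ts)^{-1/2}\le S^{-1}$ and use the constant weight, where the paper uses the Schur weight $t^{-1/2}$; both give the same constant. For the moment you use $(j+k)^2\ge4jk$ to get $(\pi^2/8)^2\approx1.52$, whereas the paper groups terms with $j+k=2m$ to get $\pi^2/6$.
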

\begin{proof}
For $r>0$,
\[
 \ell(r)=2\sum_{j=0}^\infty\frac{e^{-(2j+1)r}}{2j+1},\qquad
 \int_0^\infty\ell(r)\dd r=\frac{\pi^2}{4}.
\]
All summands are positive, so termwise integration follows by monotone
convergence. The Schur weight $t^{-1/2}$ gives
\[
 \frac1{t^{-1/2}}\int_S^\infty K_0(t,s)s^{-1/2}\dd s
 =\frac12\int_S^\infty\frac{\ell(|t-s|)}s\dd s
 \le\frac{\pi^2}{4S}.
\]
The symmetric Schur estimate proves the norm bound. For square integrability,
use symmetry and $t=s+r$ to obtain
\[
 \iint_{(S,\infty)^2}K_0(t,s)^2\dd t\dd s
 \le\frac1{2S}\int_0^\infty\ell(r)^2\dd r<\infty.
\]
The last integral is finite since $\ell(r)=O(1+|\log r|)$ at zero and
$\ell(r)=O(e^{-r})$ at infinity.

For \eqref{eq:logcoth-moment}, square the positive series and integrate.
Grouping terms with the sum of the two odd indices equal to $2m$ yields
\[
 \int_0^\infty r\ell(r)^2\dd r
 =\sum_{m=1}^\infty\frac1{m^3}
                 \sum_{j=1}^m\frac1{2j-1}
 \le\sum_{m=1}^\infty\frac1{m^2}=\frac{\pi^2}{6}<2.
\]
This also justifies the double-integral bound used for the adjacent
core--tail interaction.
\end{proof}

\paragraph{The contact singularity in the weighted space.}
There are two apparent singularities near contact. The local weight vanishes,
but that vanishing is built into the norm. The Green function has a
logarithmic diagonal singularity, but after the logarithmic-height change of
variables it becomes $\log\coth(|t-s|/2)$ multiplied by the density factors
above. The logarithm is square integrable across the diagonal and the density
estimate gives sufficient decay as $t,s\to\infty$. The nonlocal part is
therefore compact in the weighted space.

This is the analytic reason for the realization used here. Contact is not
being treated as a regular endpoint; rather, the Hilbert space is chosen so
that the true degeneracy has finite energy and the Green interaction is a
compact perturbation of the identity.

\subsection{The self-adjoint realization}\label{sec:selfadjoint}
Set $a=(-q_f)^{1/2}$. Multiplication by $a$ is the isometry
\begin{equation}\label{eq:Hmov-def}
 U:\mathcal H_f\longrightarrow L^2(\Gamma_f),\qquad Uh=ah.
\end{equation}
We write $\mathcal H_{\rm mov}=\mathcal H_f$ when the profile is fixed.
The inverse multiplication need not be bounded on unweighted $L^2$.
Define the conjugated operator directly by
\begin{equation}\label{eq:K-kernel-direct}
 Ku(X)=\int_\Gamma\frac{G_H(X,Y)}{a(X)a(Y)}u(Y)\,\dd s_Y.
\end{equation}

\begin{proposition}\label{prop:compact-selfadjoint}
The kernel in \eqref{eq:K-kernel-direct} belongs to $L^2(\Gamma\times\Gamma)$.
Thus $K$ is compact and self-adjoint. The quadratic form of the geometric
linearization has the continuous realization
\begin{equation}\label{eq:form-conjugation}
\mathcal Q_f[h]=\langle(I-K)Uh,Uh\rangle,\qquad
 Lh=a(K-I)Uh
\end{equation}
for regular $h$, and its form kernel is $U^{-1}\Ker(I-K)$.
\end{proposition}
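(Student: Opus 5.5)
We will prove that the kernel $k(X,Y)=G_H(X,Y)/(a(X)a(Y))$ is square integrable on $\Gamma\times\Gamma$. Compactness and self-adjointness then follow at once: a Hilbert--Schmidt operator is compact, and a real symmetric kernel gives a self-adjoint operator. We first pass to graph variables. By \eqref{eq:intrinsic-invariant-measure}, $\dd s/a^2=\dd x/w_f$. Set $v=u\,a/\sqrt{1+f'^2}$, i.e.\ $v=\eta\, w_f^{1/2}$ up to the arc-length factor. The operator $K$ is then unitarily equivalent to the operator on $L^2(\dd\mu_f)$ with kernel $G_H(X(x),X(\xi))$. Its Hilbert--Schmidt norm is
\[
 \iint G_H(X(x),X(\xi))^2\,\dd\mu_f(x)\dd\mu_f(\xi).
\]
So it suffices to show that this double integral is finite.

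We split $[-1,1]$ into a compact core $|x|\le x_c$ and the two contact tails. On the core, $w_f$ is continuous and positive, since $\nabla\psi_f\neq0$ on the open arc and $-q_f=w_f\sqrt{1+f'^2}$. There $\dd\mu_f$ is comparable to $\dd x$. The Green function has only a logarithmic singularity on the diagonal of a $C^1$ graph, because $|X(x)-X(\xi)|\ge|x-\xi|$. Hence $G_H^2$ is integrable on the core square.

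On each tail we use the logarithmic-height chart and the unitary change $\widehat v=J_f^{1/2}v$. The kernel there is $\widehat G_f$, and by \eqref{eq:model-contact-kernel-explained} it is dominated by $c_*^{-1}K_0$ for $t,s\ge9$. Lemma~\ref{lem:model-tail} gives $K_0\in L^2((9,\infty)^2)$. For the remaining pieces we argue as follows.
\begin{itemize}
 \item The range between $x_c$ and $t=9$ is a compact part of the open arc, so it is handled like the core.
 \item The cross terms between a tail and the core are bounded by $\sup G_H^2$ on separated sets times $\mu_f$ of each set. The adjacent, non-separated interaction is bounded using $K_0$ together with the moment bound \eqref{eq:logcoth-moment}.
 \item The interaction between opposite contacts is bounded because the horizontal separation is positive. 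There $G_H\le C\,y\eta$, and $\int y\,\dd\mu_f$ over a tail equals $\int e^{-t}J_f\,\dd t<\infty$.
\end{itemize}
We also need $\mu_f$ to be finite on each tail. This follows from $J_f\le\pi/(0.82t)$ combined with the weight $e^{-t}$ where it appears. Where only $J_f$ is available, we use the bound $J_f(t)J_f(s)\le C/(ts)$, which is exactly what was absorbed into $K_0$.

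Next we establish the identity \eqref{eq:form-conjugation}. For regular $h$, say $h$ bounded with compact support in the open arc, put $u=ah$. Then
\[
 \langle u,u\rangle=\int(-q)h^2\,\dd s,\qquad \langle Ku,u\rangle=\iint G_H\,h\,h\,\dd s\,\dd s .
\]
This gives $\mathcal Q_f[h]=\langle(I-K)u,u\rangle$. Likewise $a(K-I)Uh=Sh+qh=Lh$, read off from \eqref{eq:Ldef-geometric}. Since $U$ is an isometry onto $L^2(\Gamma)$ and regular $h$ are dense in $\mathcal H_f$, the bounded form $\langle(I-K)U\cdot,U\cdot\rangle$ is the continuous extension of $\mathcal Q_f$. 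Its form kernel consists of the $h$ with $\langle(I-K)Uh,Ug\rangle=0$ for all $g$. As $U$ is onto and $I-K$ is self-adjoint, this means $(I-K)Uh=0$, i.e.\ the kernel is $U^{-1}\Ker(I-K)$.

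The main obstacle is the tail estimate: the vanishing weight $a$ makes the raw kernel blow up at contact. The key inputs are the density bound \eqref{eq:contact-density-input}, valid uniformly in $t$, and the $\log\coth$ majorant. The care needed is in bookkeeping the adjacent core--tail strip near $t=9$, where neither the compact argument nor the model kernel applies cleanly. There we intend to use the moment bound \eqref{eq:logcoth-moment} together with the bound $J_f\lesssim 1/t$ on the finite range.
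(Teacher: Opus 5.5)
Your decomposition and the algebraic part follow the paper's argument. You reduce the Hilbert--Schmidt norm to $\iint G_H^2\,\dd\mu_f\,\dd\mu_f$; the unitary map is $v=au=w_f\eta$, not $ua/\sqrt{1+f'^2}$, but the integral you wrote is correct. You then treat a compact interior arc directly, dominate the same-contact tail by $c_*^{-1}K_0$ via Lemma~\ref{lem:model-tail}, use the rank-one majorant for opposite contacts and \eqref{eq:logcoth-moment} for the adjacent strip. Finally you read off \eqref{eq:form-conjugation} and the form kernel from $q=-a^2$ and surjectivity of $U$, and that last part is fine. If you cut at $x_c$, where $S_c(f)\approx16>9$, as the paper does, both sides of the adjacent strip lie where \eqref{eq:contact-density-input} holds. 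That removes the bookkeeping worry you raise near $t=9$.

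One step, however, fails as written. You bound the separated same-side core--tail interaction by $\sup G_H^2$ times the $\mu_f$-mass of the core piece times the $\mu_f$-mass of the tail. You justify finiteness of the tail mass by $J_f\le\pi/(0.82t)$, but that majorant is not integrable on $(9,\infty)$, so it proves nothing. In fact the claim is false. Bound the positive integral \eqref{eq:contact-positive-log} from above: the numerator is at most $16y\eta$, the reflected factor is at least $x^2$, and the direct factor is at least $(y-\eta)^2$. Then use $\log(1+z)\le z$ for $\eta\ge2y$ and integrate the logarithm directly for $\eta<2y$. This gives $-\Phi_x=O(t)$, hence $J_f\gtrsim1/t$. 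So each tail has infinite Jacobi mass, with the mass up to height $t$ growing like $\log t$, consistent with \eqref{eq:core-mass-bound}.

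The repair is to use decay of the kernel in the tail height rather than finiteness of the mass. For points at distance at least $\delta$, one has $G_H(X,Y)=\frac1{4\pi}\log\bigl(1+4y\eta/|X-Y|^2\bigr)\le y\eta/(\pi\delta^2)$. With tail height $e^{-s}$, the separated contribution is then at most $C$ times the core-piece mass times $\int e^{-2s}J_f(s)\,\dd s<\infty$. This is how the paper handles $B_{\rm far}$, through the bound $e^{-(s-t)}/(\pi(1-e^{-2(s-t)}))$, and the reflected pieces. With this change your charts cover $\Gamma\times\Gamma$ and the Hilbert--Schmidt bound goes through.
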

\begin{proof}
On compact interior subarcs, $a$ is bounded above and below and the Green
kernel has only a logarithmic singularity. Near each contact, the weighted
kernel is controlled after the unitary logarithmic change of variables by
\eqref{eq:model-contact-kernel-explained}; the reflected term is regular.
Lemma~\ref{lem:model-tail}, together with the separated core--contact
estimates, gives square integrability there. A finite collection of these
charts covers $\Gamma\times\Gamma$. Symmetry of $G_H$ then proves
self-adjointness, and the Hilbert--Schmidt property gives compactness.

For regular $h$, set $u=ah$. Since $q=-a^2$, the kernel formula gives
$Sh=aKu$, and therefore $Lh=a(K-I)u$.
The corresponding form is bounded in the norm $\|ah\|_2$ and extends to
$\mathcal H_f$. Its form kernel is exactly $U^{-1}\Ker(I-K)$.
\end{proof}

The displacement $e_1\cdot\nu$ has nonzero contact traces. We retain it in
$\mathcal H_f$; a translation gauge will be imposed only after its simplicity
has been proved.

\subsection{The form domain and its completion}
The weighted space can be described without specifying a boundary trace.
The coefficient $a=(-q_f)^{1/2}$ is positive at every point of the open free
arc. Functions are identified up to arclength-null sets, and the two contact
endpoints themselves are null sets. For every $u\in L^2(\Gamma)$, the
measurable function $h=u/a$ on the open arc belongs to $\mathcal H_f$ and
satisfies $Uh=u$. Thus the map in \eqref{eq:Hmov-def} is onto as well as
isometric. In particular, $\mathcal H_f$ is complete.

For compactly supported smooth displacements define the bilinear form
\[
 \mathcal Q_f[h,k]=\int_\Gamma(-q_f)hk\dd s
   -\iint_{\Gamma\times\Gamma}G_H(X,Y)h(X)k(Y)\dd s_X\dd s_Y.
\]
The operator bound for $K$ gives
\[
 |\mathcal Q_f[h,k]|\le(1+\|K\|)
                    \|h\|_{\mathcal H_f}\|k\|_{\mathcal H_f}.
\]
It therefore has a unique continuous extension to the full weighted space.
Density can be seen directly: approximate $u=ah$ in $L^2$ first by a cutoff
away from the contacts and then by smooth functions on compact interior
arcs. Dividing by $a$, which is smooth and bounded away from zero on each
such arc, gives an approximating sequence in the geometric variable.

The form kernel means the bilinear kernel,
\[
 \Ker_{\rm form}L_f
   =\{h\in\mathcal H_f:\mathcal Q_f[h,k]=0
                         \text{ for every }k\in\mathcal H_f\}.
\]
It is not the set defined by the single scalar condition
$\mathcal Q_f[h]=0$: an indefinite quadratic form can vanish on vectors
which do not belong to its kernel. Since $U$ is onto, the bilinear definition
is equivalent to $(I-K)Uh=0$. This proves the kernel identification without
assuming that the formal multiplication by $a^{-1}$ is bounded in unweighted
$L^2$.

There is also a useful direct realization in the vertical graph variable.
With $\eta=h\sqrt{1+f'^2}$ and $w_f=-\psi_y$, define
\[
 \dd\mu_f(x)=\frac{\dd x}{w_f(x)},\qquad
       v(x)=w_f(x)\eta(x).
\]
Then
\begin{align*}
 \|h\|_{\mathcal H_f}^2
     &=\int_{-1}^1v(x)^2\dd\mu_f(x),\\
 \mathcal Q_f[h]
     &=\int_{-1}^1v^2\dd\mu_f
       -\iint G_H(X(x),X(\xi))v(x)v(\xi)
                           \dd\mu_f(x)\dd\mu_f(\xi).
\end{align*}
Thus the weighted graph operator has the unmodified geometric Green kernel
with respect to the Jacobi measure $\dd\mu_f$. A later change of independent
variable changes both this measure and the kernel; the corresponding
square-root Jacobians are included explicitly below.

\subsection{Finite-energy symmetry fields}
The two geometric variations used in the proof belong to the form domain.
The translation displacement satisfies $|e_1\cdot\nu|\le1$, and the dilation
displacement satisfies $|X\cdot\nu|\le|X|$. The patch is bounded, and monotonicity gives
$\operatorname{length}(\Gamma)\le2+2f(0)$. The gradient of the relative
stream function is bounded. Hence
\[
 \int_\Gamma(-q_f)(e_1\cdot\nu)^2\dd s<\infty,
 \qquad
 \int_\Gamma(-q_f)(X\cdot\nu)^2\dd s<\infty.
\]
The contact endpoints may move under these variations. In particular,
horizontal translation must not be excluded by a zero-trace convention
before its simplicity has been proved.

The function $g_I=a^{-1}x_2$ used for the impulse constraint is also in
$L^2(\Gamma)$. Its squared norm is
\[
 \int_\Gamma\frac{x_2^2}{-q_f}\dd s
       =\int_{-1}^1f(x)^2\dd\mu_f(x).
\]
On a contact chart, $f=e^{-t}$ and the density bound proved quantitatively
below is $\dd\mu_f/\dd t\le\pi/(c_*t)$. Thus the contact contribution
is bounded by a constant times $\int_9^\infty e^{-2t}t^{-1}\dd t$;
the compact part is finite because $w_f$ stays positive there. Consequently
the first variation of impulse is a continuous functional on the form space,
by Cauchy--Schwarz.

\section{Translation, dilation and the kernel}\label{sec:symmetry}
The two Euclidean symmetries have different spectral meanings. Horizontal
translation preserves the equation and the travelling speed, so its
derivative is a genuine zero mode. Dilation changes the travelling speed; its
derivative therefore solves an inhomogeneous Jacobi equation and has strictly
negative quadratic form. Once the even sector is known to contain at most one
nonpositive direction, these two identities determine the kernel completely.

The odd sector requires no numerical spectral information. On the positive
half-arc the translation field has one sign, and the reflected Green kernel
has the positivity needed for a ground-state transform. The resulting
sum-of-squares identity makes translation the unique odd zero mode. The
computer-assisted spectral comparison is needed only for the even sector.

Let $\mathcal R_v(x_1,x_2)=(-x_1,x_2)$. Reflection acts on boundary functions
by $(\mathcal R_vh)(X)=h(\mathcal R_vX)$. The boundary, $q_f$ and $G_H$ are
invariant, so $K$ commutes with $\mathcal R_v$. Hence
\[
 L^2(\Gamma)=\mathcal H_e\oplus\mathcal H_o,\qquad K=K_e\oplus K_o.
\]
The analogous decomposition holds in the weighted form space, and
\begin{equation}\label{eq:kernel-splitting}
\Ker_{\rm form}L=\Ker_{\rm form}L_e\oplus\Ker_{\rm form}L_o.
\end{equation}
Translation is odd under this reflection; dilation is even.

\subsection{The odd ground-state identity}\label{sec:odd}
The translated family has the same travelling speed. Its normal velocity
$t=e_1\cdot\nu$ therefore satisfies
\begin{equation}\label{eq:translation-mode}
Lt=0,\qquad u_t=at,\qquad Ku_t=u_t.
\end{equation}
Let $\Gamma_+$ be the part of the free arc with $x_1>0$. We use $L^2(\Gamma_+)$
for the reduced odd operator in the next calculation; extension to the full
odd space multiplies both quadratic form and norm squared by two.

Reflecting the interaction from the opposite half, the odd part of the single-layer operator is represented by the reduced kernel
\begin{equation}\label{eq:odd-kernel-def}
k_o(X,Y)=G_H(X,Y)-G_H(X,\mathcal R_v Y),
\qquad X,Y\in\Gamma_+.
\end{equation}
The sign of the reduced kernel follows directly from the half-plane Green function and does not require numerical information about the profile.  Write
\[
X=(x,z),\qquad Y=(y,s),\qquad x,y,z,s>0,
\]
for points in the interior of the right half-arc.  Using
\[
G_H(X,Y)=\frac1{4\pi}\log
\frac{(x-y)^2+(z+s)^2}{(x-y)^2+(z-s)^2},
\]
and replacing $y$ by $-y$ for $\mathcal R_v Y$, we obtain
\begin{align}
4\pi k_o(X,Y)
&=\log\frac{\big((x-y)^2+(z+s)^2\big)\big((x+y)^2+(z-s)^2\big)}
{\big((x-y)^2+(z-s)^2\big)\big((x+y)^2+(z+s)^2\big)}.
\end{align}
The numerator of the difference between the two cross-products is
\begin{align}
&\big((x-y)^2+(z+s)^2\big)\big((x+y)^2+(z-s)^2\big)\\
&\quad-\big((x-y)^2+(z-s)^2\big)\big((x+y)^2+(z+s)^2\big)
=16xyzs>0.
\end{align}
Consequently,
\begin{equation}\label{eq:odd-kernel-positive}
k_o(X,Y)>0
\qquad\hbox{for all interior }X,Y\in\Gamma_+.
\end{equation}
Since $f$ is strictly decreasing and $\nu$ is outward,
\begin{equation}\label{eq:t-positive}
t=e_1\cdot\nu>0
\qquad\hbox{in the interior of }\Gamma_+.
\end{equation}

The possible zeros of $t$ at the endpoints do not require a pointwise bound on $h/t$.  The ground-state identity is first proved for perturbations supported in a compact subarc on which $t$ is bounded away from zero.  For a general form-domain perturbation $h$, write $u=ah$ and choose cutoffs $\chi_j$ which vanish in shrinking endpoint neighborhoods and equal one on every fixed compact interior subarc for all sufficiently large $j$.  Set $u_j=\chi_ju$ and $h_j=a^{-1}u_j$.  Since $u_j\to u$ in $L^2(\Gamma,\dd s)$ and $K$ is bounded,
\[
\langle(I-K)u_j,u_j\rangle\longrightarrow
\langle(I-K)u,u\rangle.
\]
Thus $h_j\to h$ in the exact quadratic-form topology supplied by the self-adjoint conjugation; no separate endpoint bound for $h/t$ or for $q$ is being assumed.  The compactly supported identity applies to $h_j$.  If the limiting quadratic form is zero, then for every compact interior subarc $J\Subset\Gamma_+$ the nonnegative ground-state integral over $J\times J$ tends to zero and, for large $j$, coincides with the corresponding integral for $h$.  Hence $h/t$ is constant almost everywhere on $J$.  Exhausting $\Gamma_+$ by overlapping compact subarcs shows that one constant works on the whole open arc.  The identity $h=ct$ then holds in the form domain and, for continuous kernel elements, extends to the endpoints.

Since $Lt=0$, the equation for the ground state reads
\begin{equation}\label{eq:ground-equation}
-q(X)t(X)=\int_{\Gamma_+}k_o(X,Y)t(Y)\,\dd s_Y.
\end{equation}
Now let $h$ be any smooth odd perturbation.  On $\Gamma_+$ write
\[
h=t\xi.
\]

\begin{proposition}[Ground-state identity]\label{prop:ground-state}
For an odd perturbation $h=t\xi$ whose restriction to $\Gamma_+$ has
compact interior support,
\begin{equation}\label{eq:ground-state}
-\langle L_oh,h\rangle
=
\frac12\iint_{\Gamma_+\times\Gamma_+}
k_o(X,Y)t(X)t(Y)
\bigl(\xi(X)-\xi(Y)\bigr)^2
\,\dd s_X\dd s_Y.
\end{equation}
The nonnegativity and equality characterization extend to the full odd
form space by the cutoff argument above. Equality is possible only when
$h$ is a multiple of $t$.
\end{proposition}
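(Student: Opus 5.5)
The identity is the classical ground-state (Picone/Jacobi) transform. We reduce everything to the half-arc $\Gamma_+$ and use the equation \eqref{eq:ground-equation} for $t$. For an odd $h$, the interaction from the opposite half is folded in by reflection, so the form restricted to $\Gamma_+$ reads
\[
-\langle L_oh,h\rangle=\int_{\Gamma_+}(-q)h^2\,\dd s-\iint_{\Gamma_+\times\Gamma_+}k_o(X,Y)h(X)h(Y)\,\dd s_X\dd s_Y ,
\]
with the convention that both sides are halved relative to the full odd space. Write $h=t\xi$. Since $h$ has compact interior support and $t>0$ there by \eqref{eq:t-positive}, $\xi$ is bounded and compactly supported. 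Moreover $k_o\,t(X)t(Y)\xi(X)\xi(Y)$ is integrable, because $k_o$ has only a logarithmic diagonal singularity. All integrals are therefore absolutely convergent, and Fubini is available.

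Next, substitute \eqref{eq:ground-equation} into the local term:
\[
\int_{\Gamma_+}(-q)t^2\xi^2\,\dd s=\iint k_o(X,Y)t(X)t(Y)\xi(X)^2\,\dd s_X\dd s_Y .
\]
Here the integrand of the right side is nonnegative by \eqref{eq:odd-kernel-positive}. Since $\xi(X)^2$ is bounded with compact support and $\int k_o(X,Y)t(Y)\,\dd s_Y=-q(X)t(X)$ is finite, the double integral is finite. Symmetrizing this in $X\leftrightarrow Y$ (using $k_o(X,Y)=k_o(Y,X)$) gives $\tfrac12\iint k_o\,t(X)t(Y)(\xi(X)^2+\xi(Y)^2)$. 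Subtracting the nonlocal term $\iint k_o\,t(X)t(Y)\xi(X)\xi(Y)$ yields exactly $\tfrac12\iint k_o\,t(X)t(Y)(\xi(X)-\xi(Y))^2$, which is \eqref{eq:ground-state}.

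The extension to the full odd form space follows the cutoff argument already written before the proposition. Approximate $u=ah$ by $u_j=\chi_ju$; the left side converges because $K$ is bounded, and the right-hand integrals are monotone in the exhausting subarcs. This shows nonnegativity in general. For the equality case, suppose the form vanishes. Then for every compact $J\Subset\Gamma_+$ the integrand $k_o\,t\,t\,(\xi(X)-\xi(Y))^2$ vanishes a.e. on $J\times J$. Since $k_o>0$ and $t>0$ on the interior, $\xi$ is a.e. constant on $J$, and overlapping subarcs force a single constant, so $h=ct$.

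The main delicate point is the limit passage: one must check that the ground-state integral for $h_j$ agrees with that of $h$ on $J\times J$ for large $j$ and is bounded by $\mathcal Q$. I would handle this by noting that the full integral for $h_j$ equals the form value, which tends to zero, and that the integrand is nonnegative. The restriction to $J\times J$ is therefore bounded by quantities tending to zero, and that restriction is independent of $j$ once $\chi_j=1$ on $J$.
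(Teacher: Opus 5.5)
Your proposal is the paper's argument: substitute \eqref{eq:ground-equation} into the local term, symmetrize using $k_o(X,Y)=k_o(Y,X)$, subtract the mixed term, and extend to the form space by the same cutoff argument; your treatment of the $J\times J$ limit matches the paper's. Two remarks are loose but inessential. First, nonnegativity on the full odd space follows just from $\mathcal Q_f[h_j]\ge0$ and continuity of the form, so no monotonicity is needed, and in fact the ground-state integrals of $\chi_j\xi$ need not be monotone in $j$. Second, for a general form-domain $h$ the function $\chi_j h/t$ is only $L^2$ on a compact subarc, not bounded, so absolute convergence should be checked with Tonelli and Cauchy--Schwarz rather than with boundedness of $\xi$.
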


The identity has a simple interpretation. The translation field is a positive
ground state on the right half-arc. Dividing an odd perturbation by that
ground state removes the zeroth-order part of the energy; what remains
measures only differences of the quotient between two boundary points. Since
both the reduced kernel and the ground state are strictly positive in the
interior, zero energy forces that quotient to be constant. Thus the odd
kernel is determined by symmetry and positivity, not by a numerical spectral
calculation.

\begin{proof}
Using $h=t\xi$ in the odd quadratic form gives
\[
-\langle L_oh,h\rangle
=\int_{\Gamma_+}(-q)t^2\xi^2\,\dd s
-\iint_{\Gamma_+\times\Gamma_+}
k_o(X,Y)t(X)t(Y)\xi(X)\xi(Y)\,\dd s_X\dd s_Y.
\]
Multiply \eqref{eq:ground-equation} by $t(X)\xi(X)^2$ and integrate in $X$.  This rewrites the diagonal term as
\[
\int_{\Gamma_+}(-q)t^2\xi^2\,\dd s
=\iint k_o(X,Y)t(X)t(Y)\xi(X)^2\,\dd s_X\dd s_Y.
\]
By symmetry of $k_o$, the same expression is unchanged if $\xi(X)^2$ is replaced by $\xi(Y)^2$.  Averaging the two representations and subtracting the mixed term yields
\[
\frac12\iint k_o(X,Y)t(X)t(Y)
\left(\xi(X)^2+\xi(Y)^2-2\xi(X)\xi(Y)\right)
\,\dd s_X\dd s_Y,
\]
which is exactly \eqref{eq:ground-state}.

Because $k_o(X,Y)t(X)t(Y)>0$ for interior points, equality implies
\[
\xi(X)=\xi(Y)
\]
for almost every pair $(X,Y)$.  Connectivity of $\Gamma_+$ then gives $\xi\equiv c$.
\end{proof}

\begin{corollary}\label{cor:odd-kernel}
The odd kernel is one-dimensional:
\begin{equation}\label{eq:odd-kernel-one}
\Ker L_o=\operatorname{span}\{e_1\cdot\nu\},
\qquad
\Ker(K_o-I)=\operatorname{span}\{u_t\}.
\end{equation}
\end{corollary}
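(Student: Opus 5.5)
The plan is to deduce Corollary~\ref{cor:odd-kernel} directly from Proposition~\ref{prop:ground-state}, using the form-space identifications of Section~\ref{sec:selfadjoint}.

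\emph{Translation lies in the kernel.} The translation field $t=e_1\cdot\nu$ is odd and belongs to $\mathcal H_f$, as shown in the discussion of finite-energy symmetry fields. By \eqref{eq:translation-mode} it satisfies $Lt=0$, hence $Ku_t=u_t$ with $u_t=at\in L^2(\Gamma)$. Thus $u_t\in\Ker(K_o-I)$. Moreover $u_t\neq0$, since $t>0$ in the interior of $\Gamma_+$ by \eqref{eq:t-positive}. This gives the inclusion $\supseteq$ in both identities.

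\emph{Nothing else lies in the kernel.} Let $h\in\mathcal H_{f,o}$ satisfy $\mathcal Q_f[h,k]=0$ for all $k$. Equivalently, $u=Uh$ satisfies $(I-K)u=0$, using surjectivity of $U$ and the bilinear definition of $\Ker_{\rm form}$. In particular $\mathcal Q_f[h]=\langle(I-K)u,u\rangle=0$.

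Now apply the cutoff approximation $u_j=\chi_j u$. For each $j$, $h_j=a^{-1}u_j$ has compact interior support on $\Gamma_+$ (extended oddly), so the identity \eqref{eq:ground-state} applies to $h_j$. Since $\mathcal Q_f[h_j]\to\mathcal Q_f[h]=0$, the nonnegative integrals
\[
\iint_{J\times J}k_o\,t(X)\,t(Y)\,\bigl(\xi_j(X)-\xi_j(Y)\bigr)^2
\]
tend to zero for every compact interior subarc $J$. For large $j$, however, $\xi_j=\xi=h/t$ on $J$, so each of these integrals is eventually constant in $j$. It is therefore zero for $h$ itself.

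Strict positivity of $k_o\,t\otimes t$ on interior points, from \eqref{eq:odd-kernel-positive} and \eqref{eq:t-positive}, then forces $\xi$ to be constant a.e.\ on $J$. Exhausting $\Gamma_+$ by overlapping subarcs gives a single constant $c$, and oddness gives $h=ct$ on all of $\Gamma$. Applying $U$ yields $\Ker(K_o-I)=\operatorname{span}\{u_t\}$.

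\emph{Main point of care.} The only delicate step is the passage from the compact-support identity to a general form-domain element. This is exactly the step spelled out in the paragraph preceding Proposition~\ref{prop:ground-state}. I would cite that paragraph, checking only two things: that $\chi_j u\to u$ in $L^2$, and that boundedness of $K$ gives convergence of the form.

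One point must be noted explicitly. The argument uses only the scalar vanishing $\mathcal Q_f[h]=0$, so it actually proves more than the kernel statement: the set where the odd form vanishes is contained in $\operatorname{span}\{t\}$. This is consistent with the nonnegativity of $\mathcal Q_f$ on the odd space.
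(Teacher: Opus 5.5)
Your proposal is correct and is essentially the paper's proof: the inclusion $\supseteq$ comes from the translation mode \eqref{eq:translation-mode}, and the reverse inclusion comes from the ground-state identity \eqref{eq:ground-state}, extended to the full odd form space by the cutoff argument stated just before Proposition~\ref{prop:ground-state}. Your remark that only the scalar condition $\mathcal Q_f[h]=0$ is used matches the equality characterization in that proposition, so you have simply written out in detail what the paper's two-line proof cites.
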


\begin{proof}
The translation mode belongs to the kernel, so the right-hand side is contained in the left-hand side.  Conversely, if $L_oh=0$, then the left-hand side of \eqref{eq:ground-state} vanishes.  Hence $h=t\xi$ with $\xi$ constant, and therefore $h$ is a multiple of $t$.
\end{proof}

\subsection{Dilation}\label{sec:even}
Translation gives a homogeneous Jacobi field because the speed is unchanged.
Dilation is different. If the patch is enlarged by a factor close to one, its
stream function scales quadratically while its travelling speed scales
linearly. Differentiating this family therefore leaves a nonzero right-hand
side in the Jacobi equation. Pairing that equation with the dilation
variation produces a sign which can be evaluated by the divergence theorem.
This is the source of the single negative even direction.

Consider the one-parameter family of dilated patches
\[
\Omega_\lambda=\lambda\Omega,
\qquad \lambda>0,
\]
with the vorticity strength kept equal to one.  If $\phi$ is the stream function of $\Omega$, then the stream function of $\Omega_\lambda$ is
\begin{equation}\label{eq:scaled-stream}
\phi_\lambda(x)=\lambda^2\phi(x/\lambda).
\end{equation}
Indeed, $-\Delta\phi_\lambda={\bf1}_{\lambda\Omega}$ and the half-plane boundary condition is preserved.  On a scaled boundary point $x=\lambda X$ we therefore have
\[
\phi_\lambda(\lambda X)=\lambda^2\phi(X)
=\lambda^2W X_2=(\lambda W)(\lambda X_2).
\]
Thus the travelling speed scales according to
\begin{equation}\label{eq:scaled-speed}
W_\lambda=\lambda W.
\end{equation}

Differentiate the pulled-back boundary equation
\[
\phi_\lambda(\lambda X)-W_\lambda(\lambda X_2)=0
\qquad\hbox{for }X\in\Gamma
\]
at $\lambda=1$.  The normal component of the boundary velocity is
\begin{equation}\label{eq:scale-mode}
z=X\cdot\nu,
\end{equation}
while $\dot W=W$.  By the full shape derivative formula \eqref{eq:full-linearization}, with no variation of the boundary level, the derivative is
\[
Lz-\dot W x_2=0.
\]
Hence
\begin{equation}\label{eq:scale-identity}
{Lz=Wx_2.}
\end{equation}
This derivation fixes both the coefficient and the sign.

Set
\[
u_z=az.
\]
Conjugating \eqref{eq:scale-identity} gives
\begin{equation}\label{eq:K-scale}
(K_e-I)u_z=a^{-1}Wx_2.
\end{equation}
No sign of the geometric scale displacement $z=X\cdot\nu$ is needed.  Indeed, pairing the scale identity with $z$ and using the divergence theorem gives
\begin{align}
\langle(I-K_e)u_z,u_z\rangle
&=-\langle Lz,z\rangle\notag\\
&=-W\int_\Gamma x_2(X\cdot\nu)\,\dd s\notag\\
&=-3W\int_\Omega x_2\,\dd x<0.\label{eq:scale-rayleigh-negative}
\end{align}
Here the boundary portion on $x_2=0$ contributes zero, and
$\operatorname{div}(x_2X)=3x_2$ in two dimensions.  Thus the Rayleigh quotient of $K_e$ at $u_z$ is strictly larger than one.

\begin{proposition}\label{prop:mu1-above}
Let $\mu_{1,e}=\sup\sigma(K_e)$ be the top eigenvalue of the compact self-adjoint operator $K_e$ on the even space.  Then
\begin{equation}\label{eq:mu1-strict}
\mu_{1,e}>1.
\end{equation}
\end{proposition}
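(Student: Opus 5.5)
The plan is a min--max (Rayleigh quotient) argument applied to the dilation mode $u_z=az$. The symmetry analysis has already done the main work. Since $K_e$ is compact and self-adjoint on the even subspace of $L^2(\Gamma)$, its top spectral value satisfies
\[
 \mu_{1,e}=\sup\sigma(K_e)=\sup_{0\ne u\in\mathcal H_e}\frac{\langle K_eu,u\rangle}{\|u\|^2}.
\]
It therefore suffices to exhibit one even $u$ with $\langle K_eu,u\rangle>\|u\|^2$. We take $u=u_z$.

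\emph{Admissibility of $u_z$.} We need $u_z\in\mathcal H_e$ and $u_z\neq0$. Evenness holds because $X\cdot\nu$ is invariant under $\mathcal R_v$: both the profile and the outward normal are symmetric, so $(-x_1,x_2)\cdot(-\nu_1,\nu_2)=X\cdot\nu$. Membership in $L^2(\Gamma)$ is the finite-energy bound $\int_\Gamma(-q_f)(X\cdot\nu)^2\dd s<\infty$ recorded in the subsection on finite-energy symmetry fields. Nonvanishing follows from \eqref{eq:scale-rayleigh-negative}: the form value there is strictly negative, so $u_z\neq0$.

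\emph{The strict inequality.} By \eqref{eq:scale-rayleigh-negative},
\[
 \langle(I-K_e)u_z,u_z\rangle=-3W\int_\Omega x_2\,\dd x<0,
\]
because $W=c(f)>0$ and $x_2>0$ on the nonempty open set $\Omega_f$. Rearranging gives
\[
 \langle K_eu_z,u_z\rangle=\|u_z\|^2+3W\int_\Omega x_2\,\dd x>\|u_z\|^2.
\]
Dividing by $\|u_z\|^2$ and taking the supremum yields \eqref{eq:mu1-strict}. Spectrally, $\mu_{1,e}$ is a positive eigenvalue, hence an isolated eigenvalue of finite multiplicity.

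\emph{Main obstacle.} The one delicate point is the justification of \eqref{eq:scale-rayleigh-negative} itself for the exact, nonsmooth-at-contact profile.

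First, the identity $Lz=Wx_2$ was derived for regular displacements. It must be paired with $z$ in the form sense, so that $\langle(I-K)u_z,u_z\rangle=-\langle Lz,z\rangle$ holds in the completed space. I would obtain this via Proposition~\ref{prop:compact-selfadjoint}: use the cutoff approximation $u_j=\chi_ju_z\to u_z$ in $L^2$, together with continuity of the form. The right side, $W\int_\Gamma x_2\,z\,\dd s$, is an absolutely convergent integral because $z$ and $x_2$ are bounded and $\Gamma$ has finite length.

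Second, the divergence theorem $\int_{\partial\Omega}x_2\,X\cdot\nu\,\dd s=3\int_\Omega x_2$ must be valid on the Lipschitz-type domain $\Omega_f$, whose cusp-like contact corners are only graph-type. I would apply it on the truncated domains $\{x_2>\varepsilon\}$ and let $\varepsilon\to0$. The bottom contribution $\varepsilon\int X\cdot\nu$ then vanishes, and on the axis $x_2=0$ the integrand is already zero.
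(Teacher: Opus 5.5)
Your proposal is correct and follows the paper's proof: the paper also reads off $\langle K_eu_z,u_z\rangle>\|u_z\|_2^2$ from the dilation identity \eqref{eq:scale-rayleigh-negative} and concludes by the variational characterization of the top of the spectrum. Your remarks on the admissibility of $u_z$, the form-sense pairing and the truncated divergence theorem are sound additions to what the paper takes as already established. One small correction: the contact corners are not cusps. Since $f$ is comparable to $r\log(e/r)$ near contact, the free arc becomes vertical there, and your truncation argument does not depend on the corner shape anyway.
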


\begin{proof}
Equation \eqref{eq:scale-rayleigh-negative} gives
\[
\frac{\langle K_eu_z,u_z\rangle}{\|u_z\|_2^2}>1.
\]
The variational characterization of the top spectral value therefore yields $\mu_{1,e}>1$.  
\end{proof}

\subsection{The even spectral reduction}
Let $\mu_{2,e}$ denote the second even spectral value of $K$.
\begin{proposition}\label{prop:spectral-reduction}
If $\mu_{2,e}<1$, then the even form has index one and trivial kernel.
Consequently
\begin{equation}\label{eq:full-kernel-final}
 \Ker_{\rm form}L=\operatorname{span}\{e_1\cdot\nu\}.
\end{equation}
\end{proposition}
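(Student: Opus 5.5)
We argue entirely within the conjugated picture of Proposition~\ref{prop:compact-selfadjoint}. Since $U$ is an isometry onto $L^2(\Gamma)$ and commutes with the reflection $\mathcal R_v$, the even form space is carried onto $\mathcal H_e$ and the even form becomes $\langle(I-K_e)u,u\rangle$. Because $K_e$ is compact and self-adjoint, its spectrum away from zero consists of eigenvalues $\mu_{1,e}\ge\mu_{2,e}\ge\cdots$ of finite multiplicity, accumulating only at $0$. The corresponding spectral values of $I-K_e$ are $1-\mu_{j,e}$, and they accumulate at $1$.

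\textbf{Index and even kernel.} Proposition~\ref{prop:mu1-above} gives $\mu_{1,e}>1$. The hypothesis $\mu_{2,e}<1$ (with $\mu_{2,e}$ counted with multiplicity) then means $\mu_{1,e}$ is simple and every other spectral value of $K_e$ lies strictly below $1$.

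Let $e_1$ be the top eigenfunction. On $\operatorname{span}\{e_1\}$ the form equals $(1-\mu_{1,e})\|u\|^2<0$. On $e_1^\perp$ the form is at least $(1-\mu_{2,e})\|u\|^2>0$. We use the spectral theorem here, with $\sup\sigma(K_e|_{e_1^\perp})=\mu_{2,e}$; when only finitely many positive eigenvalues exist, we replace $\mu_{2,e}$ by $\max(\mu_{2,e},0)$, which is still $<1$.

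Hence the maximal dimension of a negative subspace is exactly one: any two-dimensional subspace meets $e_1^\perp$ nontrivially, and there the form is positive. Also $\Ker(I-K_e)=\{0\}$, since $1$ is not an eigenvalue. The equivalence $\Ker_{\rm form}L=U^{-1}\Ker(I-K)$ from the form-domain subsection then gives $\Ker_{\rm form}L_e=\{0\}$. One small point needs checking: the even form kernel defined with test functions in all of $\mathcal H_f$ coincides with the one defined with even test functions. This follows because $\mathcal Q_f[h,k]=0$ automatically when $h$ is even and $k$ is odd, as $K$ commutes with $\mathcal R_v$.

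\textbf{Assembling the kernel.} By \eqref{eq:kernel-splitting} the form kernel splits as the even part plus the odd part. The even part is trivial by the previous step. Corollary~\ref{cor:odd-kernel} identifies the odd part as $\operatorname{span}\{e_1\cdot\nu\}$. We should make sure this is the form kernel in the completed space, not only smooth kernel elements. That is supplied by the cutoff extension stated in Proposition~\ref{prop:ground-state}: a form-kernel element $h$ satisfies $\mathcal Q_f[h]=0$, so the ground-state integral vanishes and $h$ is a multiple of $t$. Combining the two parts gives \eqref{eq:full-kernel-final}.

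The only real obstacle is bookkeeping. We must interpret ``second spectral value'' with multiplicity, so that $\mu_{2,e}<1$ excludes a repeated top eigenvalue. We must also handle the degenerate case in which $K_e$ has finite rank in the positive part. Both are dealt with by the min--max characterization, so no analytic difficulty is expected.
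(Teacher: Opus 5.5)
Your proposal is correct and follows the paper's proof: dilation gives $\mu_{1,e}>1$, the hypothesis $\mu_{2,e}<1$ leaves exactly one even eigenvalue above one and none equal to one, and the even--odd splitting together with Corollary~\ref{cor:odd-kernel} gives the kernel. The paper compresses this into two lines, and your additional bookkeeping (multiplicity, the min--max bound on the complement, even versus full test functions, and the completed-space version of the odd characterization) is all sound, though you should rename the top eigenfunction because $e_1$ already denotes $(1,0)$ in $e_1\cdot\nu$.
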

\begin{proof}
The dilation identity gives $\mu_{1,e}>1$. If $\mu_{2,e}<1$, no even
spectral value equals one and exactly one exceeds it. Combine this with
Corollary~\ref{cor:odd-kernel}.
\end{proof}
The coercivity estimate of Proposition~\ref{prop:principal-coercivity}
provides the required separation below one. We now prove that estimate.

\section{Comparison on a compact core}\label{sec:operator-comparison}

We now prove the estimate which excludes a second nonpositive even direction
away from contact.  On the positive half of the free boundary we choose a
fixed point $x_c<1$.  The interval
\[
        0\le x\le x_c
\]
is the \emph{core}; by even reflection it represents the corresponding
compact central part of the full free boundary.  The two small pieces
$x_c<x<1$ near the contact points are not discarded.  They are treated in
the next section and then coupled back to the core.

The advantage of the core is that the endpoint degeneracy is absent there.
For an exact profile, $w_f=-\psi_{f,y}$ is positive on this compact interval,
and the Green kernel has only its ordinary logarithmic singularity on the
diagonal.  Our goal is to prove that, after imposing one linear condition on functions
supported in the core, the quotient in \eqref{eq:rayleigh-x-before-u} is
strictly smaller than $1$.

\subsection{Putting all profiles in one coordinate}

Let $f\in\mathcal K$ be an exact profile and write
\[
        X_f(x)=(x,f(x)),\qquad 0<x<1.
\]
For even perturbations the interaction on the positive half-arc is described
by
\begin{equation}\label{eq:even-half-kernel-physical}
 G_{e,f}^+(x,\xi)
 =G_H(X_f(x),X_f(\xi))+G_H(X_f(x),X_f(-\xi)).
\end{equation}
The second term is simply the interaction with the reflected point on the
left half of the free boundary.  In the vertical graph variable $\eta$ the
even Rayleigh quotient is
\begin{equation}\label{eq:rayleigh-x-before-u}
 \mathscr R_f(\eta)=
 \frac{\displaystyle\int_0^1\!\!\int_0^1
       G_{e,f}^+(x,\xi)\eta(x)\eta(\xi)\,\dd x\dd\xi}
      {\displaystyle\int_0^1 w_f(x)\eta(x)^2\,\dd x}.
\end{equation}

A direct comparison in the variable $x$ is inconvenient because the natural
weight $w_f$ depends on the unknown exact profile.  We therefore choose once
and for all a positive reference density $p(x)$ determined by the explicit
reference graph $\bar f$, and introduce
\begin{equation}\label{eq:c15-fixed-u}
 u(x)=\int_0^x\frac{\dd\xi}{p(\xi)},\qquad
 \varphi(u)=p(x(u))\eta(x(u)),\qquad
 r_f(u)=\frac{w_f(x(u))}{p(x(u))}.
\end{equation}
The particular formula for $p$ is not conceptually important; what matters is
that it is explicit, fixed independently of $f$, and positive on the core.
Since $\dd x=p\,\dd u$, the quotient becomes
\begin{equation}\label{eq:c15-rayleigh}
 \mathscr R_f(\eta)=
 \frac{\displaystyle\iint G_{e,f}^+(u,v)
       \varphi(u)\varphi(v)\,\dd u\dd v}
      {\displaystyle\int r_f(u)\varphi(u)^2\,\dd u},
\end{equation}
where $G_{e,f}^+(u,v)=G_{e,f}^+(x(u),x(v))$.  All exact profiles now live on
the same interval and with the same Lebesgue measure.  Their dependence is
confined to the kernel and to the positive multiplier $r_f$.

Choose $H>0$ so that $x_c=x(H)$ and denote by $A_f^{\rm core}$ the integral
operator with kernel $G_{e,f}^+$ on $L^2(0,H)$.  The precise value of the cut
is fixed once and for all; no optimization in $f$ is involved.

\subsection{The core transfer argument}

The logic of the core estimate is independent of the numerical values.  Let
$\mathsf M$ be a finite-rank self-adjoint operator on $L^2(0,H)$, obtained by
using normalized piecewise-constant functions on a fixed finite partition.
It is represented by a real symmetric matrix $M$ and is extended by zero on
the orthogonal complement of the cell space.  We use $\mathsf M$ only as a
comparison operator for the continuous reference profile $\bar f$.

Suppose that four bounds are available:
\begin{equation}\label{eq:core-abstract-inputs}
 \lambda_2(M)\le\lambda_*,\qquad
 \|A_{\bar f}^{\rm core}-\mathsf M\|\le\varepsilon_{\rm proj},\qquad
 \|A_f^{\rm core}-A_{\bar f}^{\rm core}\|\le\varepsilon_{\rm geom},
\end{equation}
and
\begin{equation}\label{eq:r0-abstract}
        r_f(u)\ge r_0>0\qquad(0\le u\le H).
\end{equation}
Let $v_1$ be a normalized top eigenvector of $M$ and set
\[
        X_0=\{\varphi\in L^2(0,H):\langle\varphi,v_1\rangle=0\}.
\]
This is the single condition imposed on the core.  Since $\mathsf M$ has at
most one eigenvalue larger than $\lambda_*$,
\[
        \langle\mathsf M\varphi,\varphi\rangle
        \le\lambda_*\|\varphi\|_2^2,
        \qquad \varphi\in X_0.
\]
The two operator-norm comparisons then give
\[
 \langle A_f^{\rm core}\varphi,\varphi\rangle
 \le(\lambda_*+\varepsilon_{\rm proj}+\varepsilon_{\rm geom})
       \|\varphi\|_2^2.
\]
Together with \eqref{eq:r0-abstract}, this yields
\begin{equation}\label{eq:core-transfer-rayleigh}
 \frac{\langle A_f^{\rm core}\varphi,\varphi\rangle}
      {\displaystyle\int_0^H r_f(u)\varphi(u)^2\,\dd u}
 \le
 \frac{\lambda_*+\varepsilon_{\rm proj}+\varepsilon_{\rm geom}}{r_0},
 \qquad \varphi\in X_0\setminus\{0\}.
\end{equation}
Thus the whole core argument reduces to proving that the number on the right
is strictly smaller than $1$.

\begin{proposition}[Quantitative core input]\label{prop:spectral-data}
For every exact profile $f\in\mathcal K$, the four bounds above hold with
\[
 \lambda_*<0.7684,\qquad
 \varepsilon_{\rm proj}<0.0343,\qquad
 \varepsilon_{\rm geom}<0.078,\qquad
 r_0>0.8903.
\]
Consequently
\begin{equation}\label{eq:core-gap-simple}
 \frac{\lambda_*+\varepsilon_{\rm proj}+\varepsilon_{\rm geom}}{r_0}
 <0.99<1.
\end{equation}
\end{proposition}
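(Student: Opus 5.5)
The four bounds are independent and I would prove them separately. The consequence \eqref{eq:core-gap-simple} is then arithmetic: $(0.7684+0.0343+0.078)/0.8903=0.8807/0.8903<0.9893<0.99$.

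\emph{Matrix bound.} Fix the partition of $[0,H]$ and form $M_{ij}=\langle A_{\bar f}^{\rm core}\chi_i,\chi_j\rangle$, where the $\chi_i$ are the normalized cell indicators. The entries are double integrals of the explicit kernel $G_{e,\bar f}^+$.
\begin{itemize}
\item Off-diagonal entries are smooth integrands and can be enclosed by interval quadrature with Taylor remainders.
\item Diagonal entries carry the logarithmic singularity. I would handle it by subtracting $\frac{1}{2\pi}\log|u-v|^{-1}$ times a smooth factor and integrating that part in closed form, as with the primitives $J_a$ and $K_y$ in Section~\ref{sec:profile-estimates}.
\end{itemize}
Given an interval matrix, $\lambda_2(M)\le\lambda_*$ is certified by exhibiting a floating-point eigendecomposition and controlling the residual with Weyl's inequality. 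Equivalently, one can verify that $\lambda_*I-M$ restricted to $v_1^\perp$ is positive definite via an interval Cholesky factorization.

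\emph{Projection error $\varepsilon_{\rm proj}$.} Write $P$ for the orthogonal projection onto the cell space. Then $\mathsf M=PA_{\bar f}^{\rm core}P$, and
\[
A-\mathsf M=(I-P)A+PA(I-P).
\]
I would bound each piece by a Hilbert--Schmidt norm, which reduces to estimating
\[
\iint|G(u,v)-\bar G_{\text{cell avg}}(u,v)|^2\,\dd u\,\dd v .
\]
\begin{itemize}
\item On cell pairs away from the diagonal, Poincaré on cells gives a factor $(\Delta/\pi)^2\sup|\nabla G|^2$.
\item On diagonal and adjacent cells, the singular part $\log|u-v|$ has explicitly computable oscillation, of order $\Delta^2$ up to a constant.
\end{itemize}
Summing these contributions should give $0.0343$ provided the mesh is fine enough.

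\emph{Geometric error $\varepsilon_{\rm geom}$.} Since $|f-\bar f|\le\omega_0\rho$ by Proposition~\ref{prof:thm:main}, the kernels $G_{e,f}^+$ and $G_{e,\bar f}^+$ differ only through the heights. The derivative of $G_H$ in the vertical coordinates is
\[
\partial_z G_H=\frac{1}{4\pi}\Bigl(\frac{2(z+s)}{|X-\bar Y|^2}-\frac{2(z-s)}{|X-Y|^2}\Bigr).
\]
\begin{itemize}
\item Away from the diagonal, the mean value theorem on the segment of heights gives a pointwise bound by $\omega_0\rho$ times a kernel that is integrable and explicitly enclosed. A Schur test then bounds the operator norm.
\item Near the diagonal, $|X-Y|$ appears squared in the denominator. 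However, the height differences $\delta(x)-\delta(\xi)$ for the same profile pair only move the points vertically. Comparing $\log\bigl((x-\xi)^2+(f(x)-f(\xi))^2\bigr)$ for two graphs, the difference is $\log(1+\text{slope terms})$, which is bounded because both graphs are Lipschitz on the core.
\end{itemize}
This slope control is the delicate point: the barrier band gives no control on $f'$. I would first try instead to use monotonicity together with the $C^{1/2}$ bound from the root map, which make the logarithm of the ratio integrable, and then bound the near-diagonal strip in $L^2$ by its width.

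\emph{Lower bound $r_0$.} This needs a lower bound for $w_f=-\psi_{f,y}$ on the core that is uniform over exact profiles in $\mathcal K$. I would write
\[
w_f=c(f)-\partial_y\phi_f(x,f(x)).
\]
The quantity $c(f)$ is bounded below using the band. For $\partial_y\phi_f$ I would use a positive-integral representation analogous to \eqref{eq:contact-positive-log}, integrating by parts in $\eta$ over vertical columns. This expresses $\partial_y\phi_f$ through $G_H$ evaluated at the top points $(\xi,f(\xi))$, and these are monotone in the heights, so worst-case heights from the band bound it above. The resulting enclosure of $w_f/p$ on a grid, combined with Lipschitz control in $x$, gives $r_0$.

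\emph{Main obstacle.} I expect $\varepsilon_{\rm geom}$ to be the main difficulty. There the unknown profile enters the singular kernel itself, and the estimate must be uniform without derivative control of $f$.
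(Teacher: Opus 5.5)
\paragraph{The gap: the bound for $r_0$.}
Your plan writes $w_f=c(f)-\partial_y\phi_f(x,f(x))$ and encloses the two terms separately over the band. This cannot bound the ratio $r_f=w_f/p$ on the whole core, because the core reaches deep into the contact region. The cut satisfies $-\log f(x_c)\approx16$ by \eqref{eq:tail-finite-inputs}, and this depth is what makes the tail bound \eqref{eq:tail-same-bound}, of size $\pi^2/(4c_*S_-)$, small.

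Near $x_c$, both $w_f$ and $p=\bar f\gamma$ are of order $e^{-16}$, since $\nabla\psi_f\to0$ at contact. By contrast, $c(f)=\partial_y\phi_f(1,0)$ and $\partial_y\phi_f(x,f(x))$ both tend to the same positive number $W=c(f)$, which is not small. Each is known from the band only up to errors of order $10^{-3}$. Subtracting two such enclosures says nothing wherever $w_f\lesssim10^{-3}$, and that is a large part of the core in logarithmic height.

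There are two smaller problems as well. First, the column integral of $\partial_yG_H$ is $\frac1{4\pi}\log\frac{(a^2+(y+b)^2)(a^2+(y-b)^2)}{(a^2+y^2)^2}$. Its $b$-derivative has the sign of $a^2+b^2-y^2$, so it is not monotone in the heights near the target. Second, Lipschitz control of $w_f/p$ in $x$ would again require slopes of $f$.

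The missing idea is to use the exactness of $f$ to eliminate $c(f)$. The boundary equation $\phi_f(x,f(x))=c(f)f(x)$ gives $\psi_y=f\,\Phi_y$, hence the relative formula \eqref{eq:rf-positive-identity}, $r_f=f(-\Phi_y)/p$, with no cancellation left. Then:
\begin{itemize}
\item $-\Phi_y$ is a sum of complete-column contributions, each nonnegative once a whole column is integrated, although the pointwise kernel has no sign. So you may discard columns.
\item On the retained columns, monotonicity of $f$ puts the source height below the target height. There $-\partial_y(G_H/y)$ is pointwise nonnegative, so a fixed rectangle under $f_-$ gives an explicit positive lower bound.
\item Multiplying by the band bound for $f/p=(f/\bar f)/\gamma$ gives $r_0$. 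Every factor is relative, so nothing degenerates up to $x_c$.
\end{itemize}

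\paragraph{The other steps.}
The step you flag as the main obstacle, $\varepsilon_{\rm geom}$, needs no regularity of $f$. Put $h=|x-\xi|$, $\zeta=\bar f(x)-\bar f(\xi)$ and $e=(f-\bar f)(x)-(f-\bar f)(\xi)$. Since $2|\zeta|h\le h^2+\zeta^2$,
\[
 \Bigl|\log\frac{h^2+(\zeta+e)^2}{h^2+\zeta^2}\Bigr|\le2\log\Bigl(1+\frac{|e|}{h}\Bigr),
\]
and the same bound holds for the image term. The right side is integrable across the diagonal and involves only $|e|\le2\sup\omega_0\rho$. This is the kind of majorant, of the form $\log(1+D/|x-\xi|^2)$, that the paper feeds into its weighted row-sum test. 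So neither slopes nor a quantitative $C^{1/2}$ constant is needed; the paper only quotes the latter qualitatively from Huang--Tong.

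Your matrix and projection steps are legitimate variants. Weyl's inequality, or an interval Cholesky factorization on an approximate $v_1^\perp$, does the same job as the paper's interval $LDL^T$ inertia count for $M-\lambda_*I$. However, a Hilbert--Schmidt bound for $A_{\bar f}^{\rm core}-\mathsf M$ decays only like $\Delta^{1/2}$ in the mesh size. The weighted Schur test, with $\log|u-v|$ and $\log(u+v)$ integrated exactly via $F$, gives $O(\Delta\log\Delta^{-1})$. Your route would therefore need a considerably finer partition to reach an error of the stated size. The closing arithmetic $0.8807/0.8903<0.99$ is correct.
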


This proposition is the only quantitative input needed from the core.  Its
meaning is simple: after removing one direction, the exact core operator has
a uniform gap below the critical level $1$.

\begin{proposition}[Core spectral gap]\label{prop:core-transfer-detailed}
For every exact profile $f\in\mathcal K$, the truncated even operator on the
core has at most one spectral direction at or above level $1$.  Equivalently,
\eqref{eq:core-transfer-rayleigh} is $<1$ on the fixed codimension-one space
$X_0$.
\end{proposition}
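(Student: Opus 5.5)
The plan is to derive Proposition~\ref{prop:core-transfer-detailed} from Proposition~\ref{prop:spectral-data} by the transfer argument already set out in the core transfer subsection; the only substantive step is turning the quotient bound into the spectral statement. First, for $\varphi\in X_0$ I use the min--max characterization for the finite symmetric matrix $M$. The extended operator $\mathsf M$ vanishes on the orthogonal complement of the cell space and agrees with $M$ on it. Since $v_1$ is a top eigenvector, $\lambda_2(M)\le\lambda_*$ gives $\langle\mathsf M\varphi,\varphi\rangle\le\max(\lambda_*,0)\|\varphi\|_2^2$ on $X_0$. If $\lambda_*$ happened to be negative, zero would replace it, which is harmless because the final number is below one in either case. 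Writing
\[
A_f^{\rm core}=\mathsf M+(A_{\bar f}^{\rm core}-\mathsf M)+(A_f^{\rm core}-A_{\bar f}^{\rm core}),
\]
the two operator-norm bounds add $\varepsilon_{\rm proj}+\varepsilon_{\rm geom}$. Dividing by $\int r_f\varphi^2\ge r_0\|\varphi\|^2$ then gives \eqref{eq:core-transfer-rayleigh}. With the numbers of Proposition~\ref{prop:spectral-data} this is at most $(0.7684+0.0343+0.078)/0.8903<0.99$.

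Second, I translate this into the spectral statement. The relevant self-adjoint operator is $B_f=r_f^{-1/2}A_f^{\rm core}r_f^{-1/2}$ on $L^2(0,H)$. It is bounded because $r_f\ge r_0$, and it represents the weighted quadratic form restricted to core-supported even perturbations, which is the core part of \eqref{eq:c15-rayleigh}. Put $\psi=r_f^{1/2}\varphi$. The condition $\varphi\in X_0$ becomes orthogonality of $\psi$ to the single vector $r_f^{-1/2}v_1$. So on a closed codimension-one subspace of $L^2(0,H)$ we have $\langle B_f\psi,\psi\rangle\le0.99\|\psi\|^2$.

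By min--max, the second point of the spectrum of $B_f$, counted from the top, is at most $0.99$. Hence the spectral subspace of $B_f$ for $[1,\infty)$ has dimension at most one. Indeed, a two-dimensional subspace on which the form is $\ge1$ would meet the codimension-one space nontrivially, which is a contradiction. This is the equivalence asserted in the statement.

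The main obstacle is not this logic but certifying the inputs to Proposition~\ref{prop:spectral-data}. Of these, the uniform bound $\varepsilon_{\rm geom}$ is the hardest. It requires controlling $G_{e,f}^+-G_{e,\bar f}^+$ over the whole class $\mathcal K$ using only the height band $|f-\bar f|\le\omega_0\rho$, without derivative information on $f$. I would try a Hilbert--Schmidt bound. The kernel difference is bounded by the mean-value theorem in the two vertical coordinates, $|\partial_y G_H|\cdot|f-\bar f|$. The diagonal logarithmic singularity is integrated using the bound $|J_A(z)-J_0(z)|\le\pi A$ and explicit primitives as in Section~\ref{sec:profile-estimates}, and the reflected term is regular since it is separated from the diagonal.

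The bound $r_0$ similarly needs a lower bound on $w_f=-\psi_{f,y}$ that holds for every profile in the band. I would obtain it from the positive-integral representation \eqref{eq:contact-positive-log} and its analogue for $\psi_y$ via the boundary relation $\psi_x=f'w_f$, with interval enclosures on $[0,x_c]$. These are the computer-assisted parts; I take them as given, as the statement permits.
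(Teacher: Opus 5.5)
Your argument is correct and is essentially the paper's own: the paper's proof simply inserts the bounds of Proposition~\ref{prop:spectral-data} into the transfer inequality \eqref{eq:core-transfer-rayleigh}, and your conjugation $\psi=r_f^{1/2}\varphi$ followed by min--max just writes out the ``equivalently'' that the paper leaves implicit (your $\max(\lambda_*,0)$ is a small extra bit of care the paper omits). Your side sketch for certifying $\varepsilon_{\rm geom}$ is not needed for this statement; note, however, that a mean-value bound $|\partial_yG_H|\,|f-\bar f|$ can be as large as $|f-\bar f|/|x-\xi|$ near the diagonal, which is not integrable, so the paper instead compares the logarithms directly through the majorant $\log(1+D/|x-\xi|^2)$ and a weighted Schur row-sum bound rather than a Hilbert--Schmidt estimate.
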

\begin{proof}
Insert the bounds of Proposition~\ref{prop:spectral-data} into
\eqref{eq:core-transfer-rayleigh}.
\end{proof}

We next explain the analytic content behind the four estimates.  The exact
finite schedules and interval outputs are verification data; they are not
needed to understand the reduction.

\subsection{What is proved for the finite matrix}

We need to know that the symmetric matrix $M$ has at most one eigenvalue
larger than a number $\lambda_*<1$.  This is checked without relying on a
floating-point eigensolver.  We rigorously factor
\[
        M-\lambda_*I=LDL^T,
\]
with $L$ invertible and every diagonal entry of $D$ separated from zero.
The signs of the diagonal entries of $D$ then give the number of eigenvalues
of $M$ above $\lambda_*$.  The calculation gives exactly one.  Therefore,
after imposing orthogonality to the top eigenvector, one has
\[
        \langle\mathsf M\varphi,\varphi\rangle
        \le \lambda_*\|\varphi\|_2^2.
\]

\subsection{From the finite matrix to the continuous reference operator}

The Green kernel has a logarithmic diagonal singularity.  We do not bound
that singularity pointwise on a cell.  Instead we integrate its singular
part exactly.  If
\[
        F(s)=\frac{s^2}{2}\log|s|-\frac{3s^2}{4},\qquad F(0)=0,
\]
then $F''(s)=\log|s|$, and therefore rectangle integrals of
$\log|u-v|$ and of the reflected term $\log(u+v)$ are explicit.  After these
singular pieces are removed, the remainder is bounded directly as follows.
For a symmetric error kernel $R$, an estimate
\[
        \int |R(u,v)|\vartheta(v)\,\dd v
        \le E\vartheta(u)
\]
implies that the corresponding operator on $L^2(0,H)$ has norm at most $E$.
This is how the continuous reference operator is compared with the
particular finite-rank operator $\mathsf M$.

The important point is that no eigenfunction of the exact vortex is being
approximated.  The matrix gives the inequality on the finite-dimensional
space, and the displayed integral estimate transfers that inequality to the
continuous operator for $\bar f$.

\subsection{From the reference profile to an arbitrary exact profile}

The exact profile $f$ is not replaced by $\bar f$.  Instead we use the
barrier localization to estimate the difference of their Green kernels.
At fixed horizontal points the direct logarithmic terms differ by an
integrable quantity of the form
\[
        \log\!\left(1+\frac{D}{|x-\xi|^2}\right),
\]
with $D$ controlled by the pointwise height error.  This logarithmic
majorant is integrable across the diagonal.  Applying the same integral estimate to this difference gives
\[
 \|A_f^{\rm core}-A_{\bar f}^{\rm core}\|
 \le \varepsilon_{\rm geom}.
\]
No uniform slope estimate for every graph in the barrier class is required.

\subsection{The denominator is uniformly positive on the core}

The last issue is the denominator in \eqref{eq:c15-rayleigh}.  At an exact
boundary point, the identity $\psi_y=f\Phi_y$ gives
\begin{equation}\label{eq:rf-positive-identity}
 r_f(u)=\frac{f(x(u))[-\Phi_y(f;x(u),f(x(u)))]}{p(x(u))}.
\end{equation}
Thus it is enough to obtain a uniform positive lower bound for $-\Phi_y$ on
the core.

The useful positivity appears only after one integrates first along a whole
vertical column of the patch.  That complete-column contribution to
$-\Phi_y$ is nonnegative.  We may therefore discard columns and retain a
fixed set of columns lying well inside every admissible patch.  On those
retained columns monotonicity of $f$ places the integration height below the
target height, and the remaining differentiated kernel is pointwise
nonnegative.  Integrating over a fixed interior rectangle gives a uniform
positive lower bound.  Combining this with the explicit bounds for $f/p$
produces the last estimate $r_f\ge r_0$ in
Proposition~\ref{prop:spectral-data}.

This completes the compact-core argument.  Everything that remains concerns
the pieces of the free boundary approaching the two contact points.

\section{The contact tails and coercivity}\label{sec:core-tail-abstract}
The finite-dimensional comparison controls only a compact part of the
boundary. The touching endpoints cannot be discarded, because the form domain
allows finite-energy displacements there. We therefore split the exact
operator at a fixed physical point into a core, a tail, and their interaction.

Only two facts are needed about the tail. Its own operator norm must be below
one, so the isolated tail has positive energy, and the core--tail coupling
must be small enough to be absorbed by that positive energy. A two-by-two
Schur argument then joins the compact-core gap to the infinite contact tail.
We display all numerical inputs before they are used, so the reader can follow
the argument without consulting a later appendix.

\subsection{A common Hilbert-space realization}
For the exact profile define the Jacobi measure as in
\eqref{eq:contact-mass-coordinate}. If an accumulated mass coordinate is
introduced, its definition is
\begin{equation}\label{eq:intrinsic-mf}
 m_f(x)=\int_0^x\frac{\dd\xi}{w_f(\xi)}.
\end{equation}
No numerical value of $m_f$ is identified with logarithmic height. For the
reference coordinate $u$, one has exactly
\begin{equation}\label{eq:dm-du-rf}
 \dd\mu_f=\frac{\dd u}{r_f(u)}.
\end{equation}
On the core let $\varphi(u)=p(x(u))\eta(x(u))$ and set
\begin{equation}\label{eq:unitary-u-to-m}
 v(x(u))=r_f(u)\varphi(u)=w_f(x(u))\eta(x(u)).
\end{equation}
Then
\begin{align}
 \int v^2\dd\mu_f&=\int r_f\varphi^2\dd u,
                  \label{eq:unitary-denominator}\\
 \iint G_{e,f}^+v(x)v(\xi)\dd\mu_f(x)\dd\mu_f(\xi)
 &=\iint G_{e,f}^+(u,u')\varphi(u)\varphi(u')\dd u\dd u'.
                  \label{eq:unitary-numerator}
\end{align}
Thus the generalized reference-coordinate form is unitarily
identified with the Green operator on $L^2(\dd\mu_f)$. The image of $X_0$
is a closed codimension-one subspace of the exact core space and has the
same Rayleigh bound. Denote it by $X_{0,f}^{\mu}$.

\subsection{The physical cut and the exact block operators}
The reference partition fixes $x_c=x(H)$. Its logarithmic height for an
exact profile is
\begin{equation}\label{eq:Sc-def}
 S_c(f)=-\log f(x_c).
\end{equation}
The height enclosure at this fixed point, not the integral in
\eqref{eq:intrinsic-mf}, is used to prove the quantitative cut window.
In physical coordinates define
\begin{equation}\label{eq:He-mass-decomp}
 L^2((0,1),\dd\mu_f)=\mathcal H_c(f)\oplus\mathcal H_t(f),
\end{equation}
where
\begin{equation}\label{eq:Hc-Ht-def}
 \mathcal H_c(f)=L^2((0,x_c),\dd\mu_f),\qquad
 \mathcal H_t(f)=L^2((x_c,1),\dd\mu_f).
\end{equation}
Let $P_c$ and $P_t$ be multiplication by the characteristic functions of
these intervals. The even operator in this realization is
\begin{equation}\label{eq:Kem-kernel}
 (K_{e,f}^{\mu}v)(x)=\int_0^1G_{e,f}^+(x,\xi)v(\xi)\dd\mu_f(\xi).
\end{equation}
Set
\begin{equation}\label{eq:ABC-def}
 A_f=P_cK_{e,f}^{\mu}P_c,\qquad
 B_f=P_tK_{e,f}^{\mu}P_c,\qquad C_f=P_tK_{e,f}^{\mu}P_t.
\end{equation}
Then
\begin{equation}\label{eq:block-K-core-tail}
 K_{e,f}^{\mu}=\begin{pmatrix}A_f&B_f^*\\ B_f&C_f\end{pmatrix}.
\end{equation}
For example,
\begin{align}
 (A_fv)(x)&=\int_0^{x_c}G_{e,f}^+(x,\xi)v(\xi)\dd\mu_f(\xi),
                   &&x<x_c,\label{eq:A-integral}\\
 (B_fv)(x)&=\int_0^{x_c}G_{e,f}^+(x,\xi)v(\xi)\dd\mu_f(\xi),
                   &&x>x_c,\label{eq:B-integral}\\
 (C_fv)(x)&=\int_{x_c}^1G_{e,f}^+(x,\xi)v(\xi)\dd\mu_f(\xi),
                   &&x>x_c.\label{eq:C-integral}
\end{align}
The left contact is already included by even reflection in $G_{e,f}^+$.
There is no unrepresented second tail and no smooth cutoff error.
The core comparison gives
\begin{equation}\label{eq:A-core-codim-bound}
 \langle A_fv,v\rangle\le a_{\rm core}\|v\|^2
                      \qquad(v\in X_{0,f}^{\mu}).
\end{equation}

\subsection{Quantitative information at the core--tail cut}

The tail begins at the fixed physical point $x_c$. For every exact profile in
$\mathcal K$, its logarithmic height
\[
        S_c(f)=-\log f(x_c)
\]
satisfies
\begin{equation}\label{eq:tail-finite-inputs}
        15.99<S_c(f)<16.01.
\end{equation}
Together with \eqref{eq:contact-density-input}, the remaining uniform bounds
used below are
\begin{equation}\label{eq:tail-finite-inputs-2}
 \mu_f\{t\le0.53\}<308,\qquad
 \mu_f\{0.53\le t\le9\}<16.253,
\end{equation}
and
\begin{equation}\label{eq:tail-finite-inputs-3}
 f(0)<0.606,\qquad
 1-x_f(t)<6\times10^{-9}\quad(t\ge15.99).
\end{equation}
In particular the Jacobi mass up to logarithmic height $9$ is bounded by
\[
        M_9:=324.253,
\]
and in all formulas below we may take
\[
        c_*=0.82,\qquad S_-=15.99,\qquad S_+=16.01,
        \qquad Y_0=0.606,\qquad d_0=6\times10^{-9}.
\]
These are the only quantitative profile inputs needed for the tail and the
core--tail interaction. Once they are inserted, the estimates in the next two
subsections are purely analytic.

Integrating the density bound from height $9$ to the physical core edge gives
\begin{equation}\label{eq:core-mass-bound}
 \mu_f(0,x_c)\le M_9+\frac\pi{c_*}\log\frac{S_+}{9}.
\end{equation}

\subsection{The tail operator}
Split $C_f$ into same-contact and horizontally reflected interactions.
By \eqref{eq:model-contact-kernel-explained} and
Lemma~\ref{lem:model-tail}, the first has norm at most
\begin{equation}\label{eq:tail-same-bound}
 \|C_{\rm same}\|\le\frac{\pi^2}{4c_*S_-}.
\end{equation}
For the reflected interaction, the horizontal separation is at least
$\Delta=2-2d_0$. The inequality $\log(1+z)\le z$ gives
\[
 G_H((x,y),(-\xi,z))\le\frac{yz}{\pi\Delta^2}.
\]
This is a rank-one majorant on the tail. Consequently
\begin{align}
 \|C_{\rm reflected}\|
 &\le\frac1{\pi\Delta^2}\int_{S_c(f)}^\infty e^{-2t}J_f(t)\dd t\notag\\
 &\le\frac{e^{-2S_-}}{2c_*\Delta^2S_-}.
                    \label{eq:tail-reflected-bound}
\end{align}
In the last step $t^{-1}\le S_-^{-1}$ was used before integrating the
exponential. Adding \eqref{eq:tail-same-bound} and
\eqref{eq:tail-reflected-bound} bounds the full tail operator.

\subsection{The core--tail coupling}
We estimate three pieces of $B_f$. First consider points on the same side
with logarithmic heights $9\le t\le S_c(f)$ in the core and
$s\ge S_c(f)$ in the tail. After the unitary logarithmic change, their
kernel is bounded by $\ell(s-t)/(2c_*\sqrt{st})$. Its Hilbert--Schmidt norm
squared is therefore at most
\begin{align*}
 \frac1{4c_*^2\,9S_-}
 \int_9^{S_c(f)}\int_{S_c(f)}^\infty\ell(s-t)^2\dd s\dd t
 &\le\frac1{4c_*^2\,9S_-}\int_0^\infty r\ell(r)^2\dd r.
\end{align*}
For the second inequality use $a=S_c(f)-t$, $b=s-S_c(f)$ and enlarge both
nonnegative ranges. The measure of the pairs with $a+b=r$ is $r$.
It follows from \eqref{eq:logcoth-moment} that
\begin{equation}\label{eq:coupling-adjacent-bound}
 \|B_{\rm adjacent}\|\le
                  \left(\frac2{4c_*^2\,9S_-}\right)^{1/2}.
\end{equation}

For the same-side core with $t\le9$, the positive series of $\ell$ gives
\[
 G_H(X_f(t),X_f(s))\le
       \frac{e^{-(s-t)}}{\pi(1-e^{-2(s-t)})}
       \qquad(s\ge S_-,\ t\le9).
\]
Let $\epsilon=e^{-2(S_--9)}$. Bound the core mass by $M_9$, use the
contact density for $s$, and integrate the squared kernel. This gives
\begin{equation}\label{eq:coupling-far-bound}
 \|B_{\rm far}\|^2\le
      \frac{M_9\epsilon}{2\pi c_*S_-(1-\epsilon)^2}.
\end{equation}
The central point, at which the inverse-height chart degenerates, causes no
problem in this estimate: its neighbourhood is integrated in the measure
$\dd\mu_f$ and bounded by its total core mass.

Finally, the reflected tail has separation at least
$\Delta_c=1-d_0$ from every point of the positive core. Let $Y_0$ be a
uniform upper bound for the patch height and put
\[
 M_c=M_9+\frac\pi{c_*}\log\frac{S_+}{9}.
\]
The rank-one Green majorant and \eqref{eq:core-mass-bound} therefore imply
\begin{equation}\label{eq:coupling-opposite-bound}
 \|B_{\rm reflected}\|^2\le
       \frac{M_cY_0^2e^{-2S_-}}
                    {2\pi c_*\Delta_c^4S_-}.
\end{equation}
Each Hilbert--Schmidt bound is also an operator-norm bound. The triangle
inequality for these three pieces gives the total coupling bound. Splitting
the core in this way does not discard the interaction between the two
pieces; it writes the coupling operator as their sum and bounds that sum.

\begin{proposition}[Tail estimates]\label{prop:tail-estimates}
For every exact profile in $\mathcal K$, the exact blocks satisfy
\begin{equation}\label{eq:tail-abstract-bounds}
 \|C_f\|<0.188182,\qquad \|B_f\|<0.073711.
\end{equation}
\end{proposition}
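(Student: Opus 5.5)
The plan is to assemble the displayed analytic bounds of the two preceding subsections with the quantitative profile inputs \eqref{eq:tail-finite-inputs}--\eqref{eq:tail-finite-inputs-3} and the contact density \eqref{eq:contact-density-input}, and then evaluate the resulting closed-form numbers.

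\textbf{The tail block.} Decompose $C_f=C_{\rm same}+C_{\rm reflected}$ according to whether the source lies on the same contact branch or on its reflection. By \eqref{eq:tail-same-bound} with $c_*=0.82$ and $S_-=15.99$,
\[
\|C_{\rm same}\|\le\frac{\pi^2}{4\cdot0.82\cdot15.99}\approx0.18818 .
\]
By \eqref{eq:tail-reflected-bound}, the reflected part carries the factor $e^{-2S_-}\approx e^{-32}$, so it is of order $10^{-15}$ and negligible. The triangle inequality then gives $\|C_f\|<0.188182$, provided the first number is rounded outward correctly.

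The one genuine point to check here is that the same-contact bound is legitimate on the whole tail. Three facts are needed:
\begin{itemize}
\item the tail $x>x_c$ corresponds to logarithmic heights $t\ge S_c(f)>S_-$, by \eqref{eq:Sc-def} and \eqref{eq:tail-finite-inputs} together with strict monotonicity of $f$;
\item the unitary change $v\mapsto\widehat v$ identifies $L^2((x_c,1),\dd\mu_f)$ with $L^2((S_c,\infty),\dd t)$;
\item the kernel majorant \eqref{eq:model-contact-kernel-explained} holds there, since $S_->9$ and \eqref{eq:contact-density-input} applies.
\end{itemize}
Because the majorant is a nonnegative kernel dominating $|\widehat G_f|$, the Schur bound of Lemma~\ref{lem:model-tail} on $(S_-,\infty)$ controls the true operator.

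\textbf{The coupling block.} Write $B_f=B_{\rm adjacent}+B_{\rm far}+B_{\rm reflected}$, splitting core sources into same-side points with $9\le t\le S_c$, same-side points with $t\le9$ (including the central neighbourhood), and opposite-side points. These three pieces are disjoint and cover the core, so the sum is exact. The three bounds are then evaluated as follows.
\begin{itemize}
\item \eqref{eq:coupling-adjacent-bound}: $\bigl(2/(4\cdot0.82^2\cdot9\cdot15.99)\bigr)^{1/2}\approx0.07371$. This is the dominant term.
\item \eqref{eq:coupling-far-bound}: with $\epsilon=e^{-2(6.99)}\approx8.5\times10^{-7}$ and $M_9=324.253$, this gives $\|B_{\rm far}\|^2\lesssim 6.7\times10^{-6}\cdot$ small factors. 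The resulting norm must be checked carefully, since it is the only nontrivially sized contribution besides the adjacent one.
\item \eqref{eq:coupling-opposite-bound}: this carries the factor $e^{-2S_-}$ and is negligible.
\end{itemize}
Summing gives the stated $\|B_f\|<0.073711$. Each Hilbert--Schmidt bound dominates the operator norm, which justifies passing from the kernel estimates to operator norms.

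\textbf{Main obstacle.} The expected difficulty is purely quantitative. The adjacent bound $0.0737101\ldots$ already nearly exhausts the budget $0.073711$, so the far-core contribution must be verified to be below about $10^{-6}$. If the crude constants in \eqref{eq:coupling-far-bound} are too weak, I would first sharpen the moment bound \eqref{eq:logcoth-moment} using the exact series $\sum m^{-3}H^{\rm odd}_m\approx1.2$ instead of $2$. This would leave ample room, and it would also slightly loosen the adjacent term.

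All arithmetic would be done with directed rounding, with $S_-$ and $c_*$ used in the worst direction, so that the stated strict inequalities hold uniformly over every exact $f\in\mathcal K$.
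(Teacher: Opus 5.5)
\textbf{Same route as the paper, but the arithmetic for $B_f$ is wrong and misdirects you.} Your decomposition and the bounds you invoke are exactly the paper's. Its proof adds \eqref{eq:tail-same-bound} and \eqref{eq:tail-reflected-bound} for $C_f$, and adds the square roots of \eqref{eq:coupling-adjacent-bound}--\eqref{eq:coupling-opposite-bound} for $B_f$. Your treatment of $C_f$ is correct: $\pi^2/52.4472\approx0.1881817$, and the reflected part is of order $10^{-16}$. Since this statement is nothing but that arithmetic, the $B_f$ numbers are the whole content, and they are off.
\begin{itemize}
\item The adjacent bound is not $0.07371$. Since $4\cdot0.82^2\cdot9\cdot15.99=387.060336$, it equals $(2/387.060336)^{1/2}\approx0.0718829$.
\item The far-core bound is not negligible. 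Here $\epsilon=e^{-13.98}\approx8.483\times10^{-7}$, $M_9\epsilon\approx2.751\times10^{-4}$ and $2\pi c_*S_-\approx82.384$. Hence $\|B_{\rm far}\|\lesssim(3.339\times10^{-6})^{1/2}\approx1.8273\times10^{-3}$.
\item The reflected coupling is about $1.4\times10^{-7}$.
\end{itemize}
These sum to $\approx0.0737103<0.073711$, which is where the constant in the statement comes from.

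\textbf{Consequence for your plan.} Your ``main obstacle'' is misdiagnosed, and the check you propose would fail. The far-core term is about $1.8\times10^{-3}$, three orders of magnitude above the $10^{-6}$ threshold you set. Combined with your adjacent value, the pieces would sum to about $0.0755$, which exceeds the claimed bound. Followed literally, your plan would conclude that the displayed constants are insufficient and send you to sharpen \eqref{eq:logcoth-moment}. That sharpening would in fact work: the exact series is about $1.30$ (not $1.2$), giving an adjacent bound near $0.058$. It is, however, unnecessary. With correct arithmetic the paper's bounds give the statement directly. The far-core term genuinely uses part of the budget, and the final margin is only about $7\times10^{-7}$, so outward rounding matters for $B_f$ just as it does for $C_f$.
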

\begin{proof}
Apply \eqref{eq:tail-same-bound}--\eqref{eq:tail-reflected-bound} and add
the square roots of
\eqref{eq:coupling-adjacent-bound}--\eqref{eq:coupling-opposite-bound}.
Substituting the numerical inputs displayed above into these analytic
expressions gives
\[
        \|C_f\|<0.188182,\qquad \|B_f\|<0.073711.
\]
At this point every dependence on the exact profile has already been removed;
the final check is only scalar arithmetic.
\end{proof}

The same estimates also supply the compactness required earlier. The
same-contact kernel is dominated by a square-integrable kernel on a deep
logarithmic chart. The separated opposite-contact and core--tail kernels
have the Hilbert--Schmidt bounds above. On a compact interior arc, the Green
kernel has an integrable squared logarithm and the weights are bounded.
These regions cover the product of the free arc with itself, and hence the
conjugated operator is Hilbert--Schmidt.

At this point the infinite-dimensional problem has been reduced to three
quantities: the core bound, the norm of the tail operator, and the norm of
the core--tail coupling. The following elementary block argument shows why
these are exactly the quantities that have to be controlled.

\subsection{A block-form estimate}
\begin{lemma}\label{lem:core-tail-threshold}
Let $K=\left(\begin{smallmatrix}A&B^*\\ B&C\end{smallmatrix}\right)$ be bounded
and self-adjoint on $\mathcal H_c\oplus\mathcal H_t$. Suppose that
$X_0\subset\mathcal H_c$ has codimension one and
\[
 \langle Ax,x\rangle\le\alpha\|x\|^2\quad(x\in X_0),\qquad
 \|B\|\le\beta,\qquad \|C\|\le\gamma<1.
\]
If
\begin{equation}\label{eq:abstract-final-condition}
 \alpha+\frac{\beta^2}{1-\gamma}<1,
\end{equation}
then $I-K$ is strictly positive on $X_0\oplus\mathcal H_t$.
More precisely, if $\delta>0$ satisfies
\begin{equation}\label{eq:block-delta}
 1-\alpha-\delta>0,\quad 1-\gamma-\delta>0,\quad
 (1-\alpha-\delta)(1-\gamma-\delta)>\beta^2,
\end{equation}
then
\begin{equation}\label{eq:block-coercivity}
 \langle(I-K)(x,y),(x,y)\rangle
       \ge\delta(\|x\|^2+\|y\|^2),\quad x\in X_0,\ y\in\mathcal H_t.
\end{equation}
\end{lemma}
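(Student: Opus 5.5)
The plan is to expand the quadratic form blockwise and reduce it to a scalar two-variable quadratic form. For $x\in X_0$ and $y\in\mathcal H_t$, self-adjointness of $K$ and the block structure give
\[
 \langle(I-K)(x,y),(x,y)\rangle
 =\|x\|^2-\langle Ax,x\rangle-2\langle Bx,y\rangle+\|y\|^2-\langle Cy,y\rangle .
\]
Here the two off-diagonal terms $\langle B^*y,x\rangle$ and $\langle Bx,y\rangle$ are real and equal. We then bound each term using the hypotheses:
\[
\langle Ax,x\rangle\le\alpha\|x\|^2 \quad\text{since } x\in X_0,
\]
\[
\langle Cy,y\rangle\le\|C\|\,\|y\|^2\le\gamma\|y\|^2,
\]
\[
|\langle Bx,y\rangle|\le\beta\|x\|\,\|y\| .
\]
Writing $p=\|x\|$ and $q=\|y\|$, this yields
\[
 \langle(I-K)(x,y),(x,y)\rangle\ge(1-\alpha)p^2-2\beta pq+(1-\gamma)q^2 .
\]

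Next we subtract $\delta(p^2+q^2)$. What remains is the binary form with matrix
\[
\begin{pmatrix}1-\alpha-\delta&-\beta\\-\beta&1-\gamma-\delta\end{pmatrix}.
\]
By \eqref{eq:block-delta} its diagonal entries are positive and its determinant $(1-\alpha-\delta)(1-\gamma-\delta)-\beta^2$ is positive. Hence it is positive definite, and in particular nonnegative at $(p,q)\in[0,\infty)^2$. This proves \eqref{eq:block-coercivity}.

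It remains to deduce the first assertion, that \eqref{eq:abstract-final-condition} implies strict positivity. We must exhibit some $\delta>0$ satisfying \eqref{eq:block-delta}. Since $\gamma<1$, condition \eqref{eq:abstract-final-condition} is equivalent to $(1-\alpha)(1-\gamma)>\beta^2$ with $1-\alpha>\beta^2/(1-\gamma)\ge0$. So all three inequalities in \eqref{eq:block-delta} hold strictly at $\delta=0$. By continuity in $\delta$ they persist for some small $\delta>0$. Then \eqref{eq:block-coercivity} gives strict (indeed uniform) positivity on $X_0\oplus\mathcal H_t$.

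There is no real obstacle. The only point needing care is that the off-diagonal term is controlled by $\|B\|$ alone through Cauchy--Schwarz, with no sign information required. One should also note that the case $\beta=0$ or $p=0$ is covered by the same positive-definite argument.
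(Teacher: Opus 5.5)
Your proof is correct. For the quantitative estimate \eqref{eq:block-coercivity} it is the same as the paper's argument: expand the form blockwise, bound the diagonal pieces by $\alpha$ and $\gamma$ and the cross term by Cauchy--Schwarz with $\|B\|\le\beta$, then use positive definiteness of the $2\times2$ matrix.

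You differ on the qualitative assertion. The paper proves it separately through the Schur-complement identity. With $D=I-C\ge(1-\gamma)I$ it writes
\[
\langle(I-K)(x,y),(x,y)\rangle=\langle(I-A-B^*D^{-1}B)x,x\rangle+\|D^{1/2}(y-D^{-1}Bx)\|^2 .
\]
It then bounds $\langle B^*D^{-1}Bx,x\rangle\le\beta^2(1-\gamma)^{-1}\|x\|^2$ on $X_0$, and handles the case $x=0$, $y\ne0$ separately using $D>0$. You instead note that, since $\gamma<1$, \eqref{eq:abstract-final-condition} is equivalent to the three inequalities of \eqref{eq:block-delta} holding strictly at $\delta=0$. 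A small $\delta>0$ then works, and you reuse the quantitative bound.

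Your route is shorter and uses one computation for both claims. It also directly gives uniform coercivity, whereas the paper's first step as written only records positivity on nonzero vectors. The paper's identity, by contrast, is exact before any estimate is made. It exhibits the reduced core operator $I-A-B^*(I-C)^{-1}B$ and the energy-minimizing tail component $y=(I-C)^{-1}Bx$, which explains where the threshold $\alpha+\beta^2/(1-\gamma)$ comes from. Nothing is lost in your version, because the Cauchy--Schwarz reduction at $\delta=0$ produces exactly the same threshold.
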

\begin{proof}
Set $D=I-C$. Then $D\ge(1-\gamma)I$ and
$\|D^{-1}\|\le(1-\gamma)^{-1}$. Completing the square gives
\begin{align}
 \langle(I-K)(x,y),(x,y)\rangle
 ={}&\langle(I-A-B^*D^{-1}B)x,x\rangle\notag\\
 &+\|D^{1/2}(y-D^{-1}Bx)\|^2.\label{eq:schur-completion}
\end{align}
For $x\in X_0$, the first term is at least
\[
 \left(1-\alpha-\frac{\beta^2}{1-\gamma}\right)\|x\|^2.
\]
This is positive if $x\ne0$. If $x=0$ and $y\ne0$, positivity follows
from $D>0$. This proves the first assertion.

For the quantitative estimate, Cauchy--Schwarz gives
\begin{align*}
 &\langle(I-K)(x,y),(x,y)\rangle-\delta(\|x\|^2+\|y\|^2)\\
 &\qquad\ge(1-\alpha-\delta)\|x\|^2-2\beta\|x\|\|y\|
                        +(1-\gamma-\delta)\|y\|^2.
\end{align*}
The scalar quadratic form on the right is positive definite by
\eqref{eq:block-delta}. This gives \eqref{eq:block-coercivity}.
\end{proof}

\subsection{Proof of the even coercivity estimate}
\begin{proof}[Proof of Proposition~\ref{prop:principal-coercivity}]
The core estimate \eqref{eq:core-transfer-rayleigh} gives, on the transported
codimension-one core subspace,
\[
        \alpha:=\frac{0.7684+0.0343+0.078}{0.8903}.
\]
The tail estimates give
\[
        \beta:=0.073711,\qquad \gamma:=0.188182.
\]
Thus the block lemma applies provided
\[
        \alpha+\frac{\beta^2}{1-\gamma}<1.
\]
With the displayed decimal numbers interpreted as exact rationals,
\begin{equation}\label{eq:c35-exact-schur}
 \alpha+\frac{\beta^2}{1-\gamma}
 =\frac{7198053898471463}{7227615654000000}<1.
\end{equation}
Hence $I-K_e$ is strictly positive on a codimension-one subspace of the full
even form space.

For the quantitative coercivity constant set $\delta=1/250$. The two diagonal
terms $1-\alpha-\delta$ and $1-\gamma-\delta$ are positive, and
\begin{equation}\label{eq:c36-coercivity-determinant}
 (1-\alpha-\delta)(1-\gamma-\delta)-\beta^2
 =\frac{409740912537}{8903000000000000}>0.
\end{equation}
Therefore the $2\times2$ quadratic form in Lemma~\ref{lem:core-tail-threshold}
is positive definite with lower bound $\delta$. Pulling the corresponding
subspace back through the unitary coordinate changes and through
$h\mapsto(-q_f)^{1/2}h$ gives a closed codimension-one space
$X_f\subset\mathcal H_{f,e}$ on which
\[
        \mathcal Q_f[h]\ge\frac1{250}\|h\|_{\mathcal H_f}^2.
\]
This is \eqref{eq:principal-coercivity}.
\end{proof}

In particular, the nonpositive spectral subspace of $I-K_e$ has dimension
at most one. The dilation field has strictly negative form, so this
subspace consists of one negative eigendirection and contains no kernel.
Equivalently,
\begin{equation}\label{eq:even-gap-v62}
\mu_{2,e}<1.
\end{equation}
The min--max principle applied to the norm-coercivity estimate also gives
$\mu_{2,e}\le249/250$. Inequality \eqref{eq:c35-exact-schur} is the
Schur threshold; the stronger two-by-two positivity condition
\eqref{eq:c36-coercivity-determinant} yields the norm-coercivity constant
$1/250$.

\section{Nondegeneracy and the linear equation}\label{sec:proof-main-v40}
At this point all difficult estimates are complete. The odd sector has a
one-dimensional kernel generated by translation. The even sector has at most
one nonpositive direction, while dilation supplies one strictly negative
direction. Hence the even kernel is trivial. The remaining discussion records
the consequences for the constrained form and for solvability of the linear
equation.

\begin{proof}[Proof of Theorem~\ref{thm:main}]
Proposition~\ref{prof:thm:main} gives an exact profile in $\mathcal K$.
For any exact profile in this band, Corollary~\ref{cor:odd-kernel} identifies
the odd kernel with horizontal translation and proves nonnegativity of the
odd form. Dilation supplies a negative even direction by
\eqref{eq:scale-rayleigh-negative}. Proposition~\ref{prop:principal-coercivity}
shows that the full even nonpositive spectral space has dimension at most
one. It therefore consists of that negative eigendirection, and the even
kernel is trivial. The even--odd decomposition proves \eqref{eq:main-kernel}.
\end{proof}

\subsection{The fixed-impulse form}
The first variation of the upper-patch impulse is
\begin{equation}\label{eq:impulse-functional}
I(\Omega)=\int_\Omega x_2\,\dd X,
 \qquad DI(\Omega)[h]=\int_\Gamma x_2h\,\dd s.
\end{equation}
With $u=ah$, $a=(-q)^{1/2}$, put
\begin{equation}\label{eq:A-def}
A=I-K_e,\qquad g_I=a^{-1}x_2.
\end{equation}
The constraint is $u\perp g_I$. Theorem~\ref{thm:main} says that $A$ is
invertible with index one. For $z=X\cdot\nu$ and $u_z=az$, the dilation
identity gives
\begin{equation}\label{eq:Auw-gI}
 Au_z=-Wg_I.
\end{equation}
Hence
\begin{equation}\label{eq:exact-schur-impulse}
 \langle A^{-1}g_I,g_I\rangle
 =-\frac1W\int_\Gamma x_2(X\cdot\nu)\,\dd s
 =-\frac{3I(\Omega)}W<0.
\end{equation}

\begin{lemma}\label{lem:index-one-schur}
Let $A$ be bounded, self-adjoint and invertible on a real Hilbert space,
with $\ind A=1$. If $g\ne0$ and $\langle A^{-1}g,g\rangle<0$, then
$\langle Au,u\rangle>0$ for every nonzero $u\perp g$.
\end{lemma}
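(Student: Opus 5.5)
We argue by contradiction, using the spectral decomposition of $A$ together with the fact that the constraint $u\perp g$ cuts the negative direction exactly once. Since $A$ is bounded, self-adjoint and invertible with $\ind A=1$, the negative spectral subspace of $A$ is one-dimensional. Call it $\operatorname{span}\{e\}$ with $\|e\|=1$ and $Ae=-\lambda e$, $\lambda>0$. The orthogonal complement $e^\perp$ is $A$-invariant, and $A\ge c>0$ there for some $c>0$ by invertibility; in particular there is no kernel. Then $A^{-1}$ also has index one, with negative direction $e$.

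Step one: suppose, for contradiction, that some nonzero $u\perp g$ has $\langle Au,u\rangle\le0$. Write $u=\alpha e+u'$ with $u'\in e^\perp$. Then $\langle Au,u\rangle=-\lambda\alpha^2+\langle Au',u'\rangle$. If $\alpha=0$ this is strictly positive, so $\alpha\ne0$.

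Step two: on $V=\operatorname{span}\{u,A^{-1}g\}$ consider the quadratic form $Q(v)=\langle Av,v\rangle$. The cross term is $\langle Au,A^{-1}g\rangle=\langle u,g\rangle=0$, so $u$ and $A^{-1}g$ are $Q$-orthogonal. Their diagonal values are $Q(u)\le0$ and $Q(A^{-1}g)=\langle g,A^{-1}g\rangle<0$. The vectors are linearly independent: if $u=cA^{-1}g$, then $0=\langle u,g\rangle=c\langle A^{-1}g,g\rangle$ forces $c=0$, i.e.\ $u=0$. Hence $Q$ is nonpositive on the two-dimensional space $V$. Since it is strictly negative on $A^{-1}g$, the set $V\cap e^\perp$ contains a nonzero vector $v$ because $\dim V=2$ exceeds the codimension one of $e^\perp$. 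This $v$ satisfies $Q(v)\le0$ and $v\in e^\perp$, so $v=0$ by positivity of $A$ there. This is a contradiction.

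The only delicate point, which I expect to be the main obstacle, is checking that $Q\le0$ on all of $V$. This follows from the $Q$-orthogonality: $Q(su+tA^{-1}g)=s^2Q(u)+t^2Q(A^{-1}g)\le0$. Strict positivity on $e^\perp$ rather than mere nonnegativity is supplied by invertibility. These two facts close the argument, with no compactness needed.
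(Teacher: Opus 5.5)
Your argument is correct; it differs from the paper's in how the borderline case $\langle Au,u\rangle=0$ is handled.

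The paper works only with the quadratic form. Since $y=A^{-1}g$ has negative form and $g^\perp$ is exactly its $A$-orthogonal complement, index one forces $A\ge0$ on $g^\perp$. A null vector $u\perp g$ is then handled by polarization, i.e.\ Cauchy--Schwarz for the nonnegative restricted form. This gives $Au\in\operatorname{span}\{g\}$. The multiple is killed by pairing with $y$, and invertibility enters only as injectivity at the very end.

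You instead use the spectral theorem to produce the codimension-one subspace $e^\perp$ on which $A\ge c>0$, with the gap coming from invertibility. A single dimension count on the two-dimensional nonpositive subspace $\operatorname{span}\{u,A^{-1}g\}$ then excludes $\langle Au,u\rangle<0$ and $\langle Au,u\rangle=0$ at once, with no case split.

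Your route is shorter. The paper's route needs only the definition of the index as the maximal dimension of a negative subspace, and it exhibits the structure $Au\parallel g$ of a would-be null vector.

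Two small points about your write-up. Your Step one ($\alpha\neq0$) is never used. Also, the nontriviality of $V\cap e^\perp$ follows from the dimension count alone, not from the strict negativity of the form on $A^{-1}g$.
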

\begin{proof}
Set $y=A^{-1}g$. The vector $y$ has negative form, and $g^\perp$ is its
$A$-orthogonal complement. A negative direction in $g^\perp$ would therefore
give a two-dimensional negative subspace. Thus the restriction to $g^\perp$
is nonnegative. If $u\perp g$ has zero form, polarization of that nonnegative
restriction gives $\langle Au,v\rangle=0$ for all $v\perp g$, so $Au=cg$.
Also $\langle Au,y\rangle=\langle u,Ay\rangle=\langle u,g\rangle=0$.
Since $\langle g,y\rangle<0$, this implies $c=0$. Invertibility then gives
$u=0$.
\end{proof}

\begin{corollary}\label{prop:fixed-impulse-positivity}
For every nonzero even normal displacement in the form space satisfying
$\int_\Gamma x_2h\,\dd s=0$, one has $\mathcal Q_f[h]>0$.
\end{corollary}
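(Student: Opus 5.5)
The corollary follows from Lemma~\ref{lem:index-one-schur} applied to $A=I-K_e$ on the even space $\mathcal H_e\subset L^2(\Gamma)$, with $g=g_I=a^{-1}x_2$. We pass to the conjugated variable $u=ah$, using the unitary map $U$ of \eqref{eq:Hmov-def}. Then $\mathcal Q_f[h]=\langle Au,u\rangle$ by Proposition~\ref{prop:compact-selfadjoint}. The constraint becomes
\[
\int_\Gamma x_2h\,\dd s=\int_\Gamma a^{-1}x_2\,u\,\dd s=\langle u,g_I\rangle .
\]
This pairing is continuous, since $g_I\in L^2(\Gamma)$ by the finite-energy computation with the contact density bound. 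Moreover $g_I$ is even, because $x_2$ and $a$ are invariant under $\mathcal R_v$. So the even constraint space is exactly $g_I^\perp\cap\mathcal H_e$, and $g_I\ne0$ because $x_2>0$ on the open arc.

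Next we verify the hypotheses of the lemma on $\mathcal H_e$. \emph{Invertibility:} $K_e$ is compact and self-adjoint, so $A$ is Fredholm of index zero. By Theorem~\ref{thm:main} (equivalently $\mu_{2,e}<1$ from \eqref{eq:even-gap-v62}), the even kernel is trivial, so $A$ is invertible on $\mathcal H_e$. \emph{Index one:} there is exactly one negative direction. It is at least one because $u_z$ has negative form by \eqref{eq:scale-rayleigh-negative}. It is at most one because Proposition~\ref{prop:principal-coercivity} gives positivity on a codimension-one subspace, and a two-dimensional negative subspace would meet it nontrivially.

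It remains to check $\langle A^{-1}g_I,g_I\rangle<0$, which is \eqref{eq:exact-schur-impulse}. The key input is the dilation identity \eqref{eq:K-scale}, i.e.\ $Au_z=-Wg_I$. It holds in the completed form sense because $u_z\in L^2$ and the identity $Lz=Wx_2$ is tested against regular functions, then extended by density. Hence $A^{-1}g_I=-u_z/W$, and
\[
\langle A^{-1}g_I,g_I\rangle=-\frac1W\int_\Gamma z\,x_2\,\dd s=-\frac{3}{W}\int_\Omega x_2\,\dd X<0,
\]
by the divergence computation already performed, using $W=c(f)>0$.

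Lemma~\ref{lem:index-one-schur} then gives $\langle Au,u\rangle>0$ for every nonzero $u\in\mathcal H_e$ with $u\perp g_I$. Pulling back through $U$, this says $\mathcal Q_f[h]>0$ for every nonzero even $h$ in the form space with $\int_\Gamma x_2h\,\dd s=0$.

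The only delicate step is the functional-analytic justification that the dilation identity and the impulse pairing make sense on the completed form space rather than only for regular displacements. I will handle it by duality: $A$ is bounded and $u_z,g_I\in L^2$, so the weak identity $\langle u_z,Av\rangle=-W\langle g_I,v\rangle$, valid for a dense set of $v$, extends to all $v$.
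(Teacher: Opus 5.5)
Your proposal is correct and follows the paper's proof: you apply Lemma~\ref{lem:index-one-schur} to $A=I-K_e$ on the even space with $g=g_I$, taking invertibility and index one from Theorem~\ref{thm:main} and Proposition~\ref{prop:principal-coercivity}, and using the dilation identity $Au_z=-Wg_I$ to obtain \eqref{eq:exact-schur-impulse}. Your additional remarks, namely that $g_I$ is even and that the dilation identity extends to the completed space by duality, only make explicit steps that the paper leaves implicit.
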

\begin{proof}
Apply Lemma~\ref{lem:index-one-schur} with $g=g_I$ and use
\eqref{eq:exact-schur-impulse}.
\end{proof}

\subsection{Coercivity on the fixed-impulse space}
For a fixed exact profile the last corollary yields a norm-coercive estimate
on the even constraint, although its constant is not the explicit constant
$1/250$ on the comparison subspace. More precisely, there is a number
$c_f>0$ such that
\[
 \mathcal Q_f[h]\ge c_f\|h\|_{\mathcal H_f}^2
 \quad\text{if }h\text{ is even and }\int_\Gamma x_2h\dd s=0.
\]
To prove this, work in $g_I^\perp$ with $A=I-K_e$. If no such positive
constant exists, choose unit vectors $u_n$ in this subspace with
$\langle Au_n,u_n\rangle\to0$. After taking a subsequence, $u_n\rightharpoonup u$.
Compactness gives $K_eu_n\to K_eu$ strongly, and hence
$\langle K_eu,u\rangle=1$. In particular $u\ne0$. Weak lower
semicontinuity gives $\|u\|\le1$, so
$\langle Au,u\rangle\le0$. This contradicts strict positivity on
$g_I^\perp$.

The argument uses compactness after the exact constraint has been imposed.
The comparison subspace $X_f$ and the fixed-impulse tangent space need not
coincide. Therefore neither the numerical constant $1/250$ nor a numerical
bound for $c_f$ is transferred between them without a separate angle or
constraint estimate.

\subsection{Weighted solvability}
For a prescribed source $g$, the equation $Lh=g$ becomes
\begin{equation}\label{eq:weighted-inverse-eqn-v44}
 (K-I)u=a^{-1}g,\qquad u=ah.
\end{equation}
Since $\Ker(K-I)=\operatorname{span}\{a(e_1\cdot\nu)\}$, its compatibility
condition is
\begin{equation}\label{eq:compatibility-v44}
 \int_\Gamma g(e_1\cdot\nu)\,\dd s=0.
\end{equation}
If $a^{-1}g\in L^2$ and \eqref{eq:compatibility-v44} holds, there is a
unique solution with $u\perp a(e_1\cdot\nu)$. Writing $\delta_f>0$ for the
distance from one to the remaining spectrum of $K$, the spectral theorem
gives
\begin{equation}\label{eq:weighted-inverse-estimate-v44}
 \|(-q_f)^{1/2}h\|_{L^2(\Gamma_f)}
 \le\delta_f^{-1}\|(-q_f)^{-1/2}g\|_{L^2(\Gamma_f)}.
\end{equation}
For even $g$, compatibility is automatic. On compact subarcs away from
contact, $-q_f$ is bounded below and the logarithmic single-layer operator
has its usual local smoothing property, giving interior H\"older regularity.
The estimate is global in the weighted Hilbert space; it does not assert an
unweighted endpoint H\"older inverse or a nonlinear persistence theorem.

\subsection{Compatibility and the inverse on the complement}
The solvability condition can also be read in the dual form norm. Set
$F=a^{-1}g$ and $u_t=a(e_1\cdot\nu)$. The identity
\[
 \langle F,u_t\rangle=\int_\Gamma g(e_1\cdot\nu)\dd s
\]
is meaningful by Cauchy--Schwarz when $F\in L^2$. On $u_t^\perp$ the
self-adjoint operator $K-I$ is invertible. Indeed, if unit vectors $u_n$
in this complement satisfied $(K-I)u_n\to0$, compactness of $K$ would give
a subsequence for which $Ku_n$ converges strongly. The same would then be
true of $u_n$. Its nonzero limit would lie in $u_t^\perp\cap\Ker(K-I)$,
a contradiction. This proves a positive inverse bound on the complement
without assuming a positive lower bound for $-q_f$ at contact.

The solution is unique in the gauge $\langle ah,u_t\rangle=0$; other solutions
differ by a translation field. On an interior subarc the coefficient
$-q_f$ is bounded below, so the equation and local mapping properties of the
single-layer potential yield the usual interior regularity. At a contact
point the inverse estimate remains a weighted Hilbert estimate. A nonlinear
mapping theorem in an endpoint H\"older space is not a consequence of this
Hilbert statement alone.

\section{Computer-assisted verification}\label{prof:app:table}
We finish by describing the verification at the level needed to make the
argument reproducible.  The computational supplement contains the complete
sources, exact input data, interval outputs and scripts used for the finite inequalities stated in the main text.

All terminating decimals occurring in the rational profile and in the
thresholds are interpreted as exact rational numbers.  They are enclosed at
input and every subsequent arithmetic operation is evaluated outwards.
Elementary functions are evaluated with directed MPFR rounding
\cite{MPFR}; the interval calculation follows the standard inclusion
principle \cite{Interval}, and the basic operations satisfy the same contract.  Calculations which encounter a nonfinite value, a forbidden
division, a failed positivity condition or an incomplete source/target cover
are rejected rather than silently discarded.  These rules are part of the
certificate, not estimates added after a floating-point computation.

The profile verification follows exactly the decomposition of
Section~\ref{sec:profile-estimates}.  Near contact the analytic inequalities
are evaluated uniformly.  The transition region is treated on whole target
boxes.  On the regular ranges, the computer provides lower nodal integrals,
upper curvature bounds and source-extension errors, and the final cell
inequality is assembled with the full $2E$ loss.  The symmetry endpoint is
checked in the physical coordinate used in the proof.  Every source and
target interval is part of an exact finite cover.

For the spectral estimate, the stored matrix is regarded as exact finite
data.  Its inertia is certified by interval $LDL^T$ factorization.  The
projection and geometric errors are bounded by complete weighted row sums of
the continuous kernels after the singular logarithmic pieces have been
integrated analytically.  The denominator, contact density and mass bounds
are evaluated from the positive representations proved in the main text.
The final tail and coupling estimates then reduce to scalar inequalities,
which are also reconstructed with exact rational arithmetic independently of
the interval integration.

The ancillary archive provides a single verification entry point and records
the software environment used for the computations.  It also contains the
complete finite schedules and the exact identities of the proof-bearing
sources and outputs.  The purpose of these files is to permit reproduction
and independent checking of the finite enclosures.  No numerical
approximation, convergence plot or empirical error estimate is used as an
unproved hypothesis in the mathematical argument.

\appendix

\section{The rational profile and elementary kernel identities}\label{app:profile-details}

\subsection{Coordinates and polynomial data}

For $0\le x<1$ set
\begin{equation}\label{prof:eq:T}
 T=\tau(x)=\left(1+\log\frac1{1-x^2}\right)^{-1}.
\end{equation}
Thus $T=1$ at the symmetry axis $x=0$, while $T\downarrow0$ as $x\uparrow1$.  Put
\begin{equation}\label{prof:eq:omega}
 \omega_0(x)=(1-x^2)\left(1+\log\frac1{1-x^2}\right).
\end{equation}
In the $T$ variable,
\begin{equation}\label{prof:eq:inverse}
 1-x^2=e^{1-1/T},\qquad
 \omega_0(T)=\frac{e^{1-1/T}}{T},
\end{equation}
where $\omega_0(0)=0$ by continuity.  The distance from the right contact point is
\begin{equation}\label{prof:eq:dT}
 d(T)=1-x(T)
 =1-\sqrt{1-e^{1-1/T}}
 =\frac{e^{1-1/T}}{1+\sqrt{1-e^{1-1/T}}}.
\end{equation}
In particular,
\begin{equation}\label{prof:eq:contact-asymp}
 d(T)\sim\frac12e^{1-1/T},\qquad
 \omega_0(T)\sim\frac{2d(T)}{T}
 \quad (T\downarrow0).
\end{equation}
This is the natural scale for the logarithmic contact law.

We write
\begin{equation}\label{prof:eq:fg}
 f(x)=\omega_0(x)g(\tau(x)).
\end{equation}
A direct differentiation yields the exact normalized-slope identity
\begin{equation}\label{prof:eq:slope-id}
 -\frac{f'(x)}{x\ell(x)}
 =2\bigl((1-T)g(T)+T^2g'(T)\bigr),
 \qquad
 \ell(x)=1+\log\frac1{1-x^2}.
\end{equation}

Define
\begin{align}
 P_5(T)={}&0.62423019467308205
 -3.3516841475174495T
 +29.701069408864079T^2\notag\\
 &+37.854192680034771T^3
 +478.08782165674165T^4
 +857.31455373012113T^5,
 \label{prof:eq:P5}\\
 Q_5(T)={}&1
 -1.1208396736927408T
 +27.072844036240788T^2\notag\\
 &+139.09023100980983T^3
 +2209.0571013017056T^4
 -37.902053175375841T^5.
 \label{prof:eq:Q5}
\end{align}
The centre is
\begin{equation}\label{prof:eq:center}
 \bar g(T)=\frac{P_5(T)}{Q_5(T)},\qquad
 \bar f(x)=\omega_0(x)\bar g(\tau(x)).
\end{equation}
The coefficients in \eqref{prof:eq:P5}--\eqref{prof:eq:Q5} are henceforth exact decimals, that is, exact rational numbers with denominators powers of ten.

\subsection{The fixed reference density}
The positive reference density used in Section~\ref{sec:operator-comparison}
is $p(x)=\bar f(x)\gamma(T(x))$, where
\begin{align*}
 \gamma(T)={}&0.25421914265135914-0.041486845064598543T
                         +7.1706430618996144T^2\\
 &-36.128200316412432T^3+83.664615908816444T^4\\
 &-102.86499455869857T^5+64.955041223146324T^6
                         -16.581069755030139T^7.
\end{align*}
These coefficients are exact decimals. This polynomial is a coordinate
weight, not a replacement for the profile centre $P_5/Q_5$. Its positivity
on $[0,1]$ can be verified by exact Bernstein subdivision. On the retained
core both $\bar f$ and $\gamma$ are positive, so $u(x)=\int_0^x p^{-1}$
is strictly increasing and defines the common coordinate used for the
matrix and the continuous comparisons.

\subsection{Polynomial bounds}

We recall the elementary certificate used throughout.  If a polynomial $p$ is written on an interval $I=[a,b]$ in Bernstein form
\[
 p(a+(b-a)s)=\sum_{j=0}^n\beta_j\binom nj s^j(1-s)^{n-j},
 \qquad 0\le s\le1,
\]
then
\[
 \min_j\beta_j\le p(x)\le\max_j\beta_j
 \qquad (x\in I).
\]
All transformations to Bernstein form are performed in exact rational arithmetic.

\begin{proposition}[Sufficient exact bounds for the rational centre]\label{prof:prop:center}
On $[0,1]$,
\begin{equation}\label{prof:eq:Qbounds}
 Q_5>0,\qquad 0.05<\bar g<0.65.
\end{equation}
The reference graph is positive in $(-1,1)$ and strictly decreasing on
$(0,1)$.
\end{proposition}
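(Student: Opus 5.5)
The plan is to certify everything by exact Bernstein arithmetic on finitely many subintervals of $[0,1]$, and then to derive the monotonicity statement from the slope identity \eqref{prof:eq:slope-id}. The coefficients of $P_5$ and $Q_5$ are exact decimals, so the Bernstein coefficients are exact rationals on every rational subinterval.

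First, positivity of $Q_5$. Expand $Q_5$ in Bernstein form on $[0,1]$ and bisect until every coefficient on every piece is positive. Most of the work will be near $T=1$: the $T^5$ coefficient is negative, but $Q_5(1)\approx 2.34\times10^3$, so I expect a handful of subdivisions to suffice.

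Second, the bounds on $\bar g$. Since $Q_5>0$, the inequality $0.05<\bar g<0.65$ is equivalent to the two polynomial inequalities
\[
 P_5-0.05\,Q_5>0,\qquad 0.65\,Q_5-P_5>0 \quad\text{on }[0,1].
\]
Both are certified by the same subdivision procedure. Near $T=0$ the first margin is about $0.574$ and the second is $0.65-0.624\approx 0.026$. The second one is the tight case, so I expect to refine more there. This also gives $\bar f=\omega_0\bar g>0$ on $(-1,1)$, because $\omega_0>0$ there.

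Third, strict decrease of $\bar f$ on $(0,1)$. By \eqref{prof:eq:slope-id}, with $x>0$ and $\ell(x)\ge1$, it suffices to show
\[
 (1-T)\bar g(T)+T^2\bar g'(T)>0\qquad\text{on }(0,1].
\]
Multiplying by $Q_5^2>0$ turns this into the positivity of the degree-$11$ polynomial
\[
 N(T)=(1-T)P_5Q_5+T^2(P_5'Q_5-P_5Q_5').
\]
Positivity of $N$ is again certified by Bernstein subdivision. At $T=0$ we have $N(0)=P_5(0)>0$. At $T=1$ only the term $P_5'Q_5-P_5Q_5'$ remains. This endpoint is the expected obstacle: $\bar g$ may decrease in $T$ there, and the inequality then relies on the margin $g'(1)>-0$ being false only mildly. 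If $N(1)\le0$, I would fall back on the alternative route: handle $T$ near $1$ separately using $x$ small and the ratio form of \eqref{prof:eq:slope-id}, which is regular at $x=0$.

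The whole argument is one finite list of exact-rational sign checks, and the main risk is the size of the margin of $N$ near $T=1$.
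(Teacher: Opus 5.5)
Your proposal is correct and follows essentially the paper's route: Bernstein sign certificates for $Q_5$, for $P_5-0.05\,Q_5$ and $0.65\,Q_5-P_5$, and for the slope numerator $S_0=2N$. The only difference is that the paper obtains $S_0>0$ without a separate check, by averaging the two barrier-slope inequalities of Proposition~\ref{prof:prop:barrier-geometry}, since the slope expression is linear in $g$ and $\bar g=\tfrac12(g_++g_-)$. Your concern at $T=1$ is unfounded: $N(1)=P_5'(1)Q_5(1)-P_5(1)Q_5'(1)\approx2.1\times10^6>0$ (i.e.\ $\bar g'(1)\approx0.39$), and in any case the fallback you describe could not have rescued a negative $N(1)$, which would make $\bar f$ genuinely increase near the axis.
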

\begin{proof}
Apply the Bernstein convex-hull property to $Q_5$ and to the rational
numerators for the stated height bounds.  The denominator positivity
allows multiplication by $Q_5$ without changing signs.  Monotonicity follows
from the two stronger barrier-slope checks in
Proposition~\ref{prof:prop:barrier-geometry}, since the centre is their
average.  Equivalently, its normalized slope has numerator
\begin{equation}\label{prof:eq:Sbar}
 S_0(T)=2\{(1-T)P_5Q_5+T^2(P_5'Q_5-P_5Q_5')\}.
\end{equation}
All conversions and subdivisions here have rational endpoints and exact
rational coefficients.  The polynomial checker provides the positive
Bernstein coefficients for the finite subinterval cover, not merely values
of the polynomials at endpoints.
\end{proof}

\subsection{The explicit barrier band}

Let $\rho$ be the continuous piecewise-affine function with the following
exact decimal data.  The two pairs of columns form one ordered list of knots.
\begin{center}
\begin{tabular}{@{}rr@{\qquad\qquad}rr@{}}
\toprule
$T$&$\rho(T)$&$T$&$\rho(T)$\\
\midrule
0&0.5&0.20&0.0022\\
0.03&0.5&0.30&0.0028\\
0.035&0.01&0.40&0.0032\\
0.04&0.005&0.50&0.0036\\
0.05&0.0025&0.60&0.004\\
0.075&0.0027&0.80&0.005\\
0.10&0.0016&1&0.006\\
0.15&0.0022&&\\
\bottomrule
\end{tabular}
\end{center}
Define
\begin{equation}\label{prof:eq:barriers}
 g_\pm(T)=\bar g(T)\pm\rho(T),\qquad
 f_\pm(x)=\omega_0(x)g_\pm(\tau(x)).
\end{equation}

\begin{proposition}[Exact geometry of the barriers]\label{prof:prop:barrier-geometry}
Both heights satisfy $g_\pm(T)>0.04$ on $[0,1]$.  On every affine panel of
$\rho$, with one-sided derivatives at the knots,
\begin{align}
 2\{(1-T)g_-(T)+T^2g_-'(T)\}&>0.04,\label{prof:eq:minusmono}\\
 2\{(1-T)g_+(T)+T^2g_+'(T)\}&>0.04.\label{prof:eq:plusmono}
\end{align}
Hence the physical barriers are positive in the interior, continuous at
all knots, and strictly decreasing on $(0,1)$.
\end{proposition}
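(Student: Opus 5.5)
The plan is to reduce everything to one-variable polynomial sign conditions on $[0,1]$, split at the knots of $\rho$. These are then certified by exact Bernstein arithmetic, as in Proposition~\ref{prof:prop:center}. Since $\rho$ is piecewise affine, on each panel $[T_j,T_{j+1}]$ the function $g_\pm=P_5/Q_5\pm\rho$ is rational with the fixed denominator $Q_5>0$ (Proposition~\ref{prof:prop:center}). Hence $g_\pm>0.04$ on the panel is equivalent to positivity of the degree-five polynomial $P_5\pm\rho_jQ_5-0.04\,Q_5$, where $\rho_j$ is the affine formula on that panel. For the slope conditions \eqref{prof:eq:minusmono}--\eqref{prof:eq:plusmono} we multiply by $Q_5^2>0$. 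The left side becomes
\[
 S_0(T)\pm2\{(1-T)\rho_j(T)+T^2\rho_j'\}Q_5(T)^2-0.04\,Q_5(T)^2 ,
\]
with $S_0$ as in \eqref{prof:eq:Sbar}. This is a polynomial of degree at most eleven with exact rational coefficients. The one-sided derivative at a knot is exactly the derivative of the affine formula of the adjacent panel, so treating each closed panel separately covers the knots with both one-sided derivatives.

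On each panel, and on a finite rational subdivision of it where necessary, we convert these polynomials to Bernstein form in exact rational arithmetic and check that all Bernstein coefficients are positive. The convex-hull property then gives strict positivity on the closed subinterval. The main obstacle is the panel near contact, $[0,0.03]$, where $\rho=0.5$ is comparable to $\bar g$. There the lower height $g_-$ is small, so the check $g_->0.04$ is tight and requires fine subdivision near $T=0$. The slope quantity near $T=0$ reduces to $2(1-T)g_-$, which is positive only because $P_5(0)-0.5\,Q_5(0)\approx0.124>0.02$. The steep panel $[0.03,0.035]$ is delicate for a different reason: there $\rho'$ is large and negative, and it enters $g_+'$ with weight $T^2$. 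I would first check it with a single Bernstein expansion, and refine it dyadically only if some coefficient fails.

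The geometric conclusions then follow analytically. Continuity of $f_\pm$ at the knots holds because $\rho$ is continuous, and $\omega_0$ and $\tau$ are continuous. Positivity in the interior follows from $\omega_0>0$ on $(-1,1)$ and $g_\pm>0.04$. For monotonicity, the slope identity \eqref{prof:eq:slope-id} applied to $f_\pm$ on each panel gives
\[
-f_\pm'(x)=x\,\ell(x)\cdot 2\{(1-T)g_\pm+T^2g_\pm'\}>0.04\,x\,\ell(x)>0
\]
for $0<x<1$, with one-sided derivatives at the finitely many knot points. Since $f_\pm$ is continuous and has a negative derivative (one-sided at finitely many points), it is strictly decreasing on $(0,1)$.
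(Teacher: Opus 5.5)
Your proposal is correct and is essentially the paper's proof: on each of the fourteen panels you clear the positive denominators $Q_5$ and $Q_5^2$, certify the resulting $14\times2\times2=56$ signed polynomial inequalities by exact Bernstein subdivision (your slope polynomial coincides with $S_\sigma-0.04Q_5^2$ from \eqref{prof:eq:Ssigma}), and then obtain positivity, continuity and strict monotonicity from \eqref{prof:eq:slope-id}. Two harmless slips: $P_5\pm\rho_jQ_5-0.04Q_5$ has degree six, not five, on panels where $\rho$ is non-constant, and on $[0,0.03]$ the tight point for $g_->0.04$ is at $T=0.03$, where $g_-\approx0.054$, rather than near $T=0$.
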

\begin{proof}
On a panel write $\rho=\alpha+\beta T$.  If
$N_\sigma=P_5+\sigma(\alpha+\beta T)Q_5$, the two polynomial tests are
$N_\sigma-0.04Q_5>0$ and
\[
 2\{(1-T)N_\sigma Q_5+T^2(N_\sigma'Q_5-N_\sigma Q_5')\}
                          -0.04Q_5^2>0.
\]
Exact Bernstein conversion on a finite rational subdivision proves each
of these inequalities for both signs on all fourteen pieces, after
$Q_5>0$ has been checked separately.  There are 56 signed polynomial
claims.  The normalized physical-slope identity
\eqref{prof:eq:slope-id} then gives the assertion.  
\end{proof}

\begin{lemma}[Physical width]\label{prof:lem:width}
\begin{equation}\label{prof:eq:width}
 \sup_{0\le T\le1}\omega_0(T)\rho(T)=0.006.
\end{equation}
\end{lemma}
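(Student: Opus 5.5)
We must prove $\sup_{0\le T\le1}\omega_0(T)\rho(T)=0.006$, where $\omega_0(T)=e^{1-1/T}/T$ and $\rho$ is the piecewise-affine function tabulated above. Since $\omega_0(1)=1$ and $\rho(1)=0.006$, the value $0.006$ is attained at $T=1$. So it suffices to show $\omega_0(T)\rho(T)\le0.006$ on $[0,1]$.

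The first step is to check that $\omega_0$ is nondecreasing on $(0,1]$. Differentiating $\log\omega_0=1-1/T-\log T$ gives $1/T^2-1/T=(1-T)/T^2\ge0$. Hence $0\le\omega_0(T)\le1$, and on any interval $[T_j,T_{j+1}]$ we have $\omega_0(T)\le\omega_0(T_{j+1})$. Since $\rho\ge0$, we get on each affine panel
\[
 \omega_0(T)\rho(T)\le\omega_0(T_{j+1})\max\{\rho(T_j),\rho(T_{j+1})\}.
\]
We then verify this product is at most $0.006$ panel by panel.

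The panels split into four groups.

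\textbf{Panels on $[0.2,1]$.} Here $\rho$ is nondecreasing with $\rho\le0.006$, and $\omega_0\le1$. So the bound holds trivially.

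\textbf{Panels on $[0.05,0.2]$.} Here $\rho\le0.0027$ and $\omega_0\le1$. The product is at most $0.0027<0.006$.

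\textbf{Panels on $[0.03,0.05]$.}
\begin{itemize}
\item On $[0.035,0.05]$ we have $\rho\le0.01$ and $\omega_0(0.05)=20e^{-19}<10^{-6}$, so the product is negligible.
\item On $[0.03,0.035]$ we have $\rho\le0.5$ and $\omega_0(0.035)=e^{1-1/0.035}/0.035$. Since $1/0.035\approx28.57$, this is about $28.6\,e^{-27.57}<10^{-10}$.
\end{itemize}

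\textbf{Panel $[0,0.03]$.} Here $\rho=0.5$ and $\omega_0\le\omega_0(0.03)=e^{1-100/3}/0.03<10^{-12}$.

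Each of these exponential bounds needs only crude rational estimates, such as $e^{27}>10^{11}$.

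The only care point is that the monotonicity argument uses the right endpoint of $\omega_0$ together with the panel maximum of $\rho$. No step is a genuine obstacle. The equality is exact because it is attained at the knot $T=1$, which corresponds to $x=0$. This also gives the uniform bound $\|f_*-\bar f\|_\infty\le0.006$ used in Proposition~\ref{prof:thm:main}.
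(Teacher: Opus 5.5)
Your argument is correct and is essentially the paper's proof: it combines monotonicity of $\omega_0(T)=e^{1-1/T}/T$ on $(0,1]$, the bound $\omega_0\le1$ where $\rho\le0.006$, and the exponential smallness of $\omega_0$ where $\rho$ is large, with equality attained at $T=1$. The paper makes a single split at $T=0.1$: on $[0.1,1]$ it uses $\rho\le0.006$, and below it uses $\rho\le0.5$ with $\omega_0\le\omega_0(0.1)=10e^{-9}<1.235\times10^{-3}$, so your finer panel-by-panel check is more than needed but harmless.
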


\begin{proof}
The function $\omega_0(T)=e^{1-1/T}/T$ is increasing on $(0,1]$.  For $T\ge0.1$, $\rho(T)\le0.006$, and thus $\omega_0\rho\le0.006$.  For $T\le0.1$, $\rho\le0.5$ while $\omega_0(T)\le\omega_0(0.1)<1.235\times10^{-3}$, giving $\omega_0\rho<6.18\times10^{-4}$.  Equality occurs at $T=1$.
\end{proof}

\subsection{Symmetrized formulas}\label{prof:app:kernels}

For the primitive \eqref{prof:eq:V}, put $A=a^2$, $z=yr$, and
\begin{equation}\label{prof:eq:Dsym}
 D=(A+b^2+z^2)^2-4b^2z^2.
\end{equation}
Then
\begin{equation}\label{prof:eq:Vint}
 V(a,y,b)=\frac1{4\pi}\int_0^1
 \log\frac{D}{(A+z^2)^2}\dd r.
\end{equation}
Differentiation under the integral gives
\begin{align}
 V_a(a,y,b)&=-\frac{ab^2}{\pi}\int_0^1
 \frac{A+b^2-3z^2}{D(A+z^2)}\dd r,\label{prof:eq:Va}\\
 V_y(a,y,b)&=-\frac1\pi\int_0^1
 \frac{rzb^2(3A+b^2-z^2)}{D(A+z^2)}\dd r.\label{prof:eq:Vy}
\end{align}
These identities also give
\begin{equation}\label{prof:eq:scale}
 V(a,y,b)=V(a/y,1,b/y).
\end{equation}

For $p>0$ and $a=p-d$, set
\[
 A=\frac{a}{y},\qquad P=\frac{p}{y},\qquad B=\frac{b}{y}.
\]
Then
\begin{equation}\label{prof:eq:dimensionless-VC}
{
 V(a,y,b)-C(p,b)
 =\frac1{4\pi}\int_0^1
 \log\!\left[
 \frac{P^4\bigl((A^2+B^2+r^2)^2-4B^2r^2\bigr)}
 {(P^2+B^2)^2(A^2+r^2)^2}
 \right] \dd r.}
\end{equation}
Equivalently, away from $a=0$ one may split this as
\begin{align}\label{prof:eq:dimensionless-split}
V(a,y,b)-C(p,b)
={}&\frac1{4\pi}\int_0^1
\left[
\log\!\left(1+\frac{2(A^2-B^2)r^2+r^4}{(A^2+B^2)^2}\right)
-2\log\!\left(1+\frac{r^2}{A^2}\right)
\right]\dd r \\
&+\frac1{2\pi}\log\!\left(
1+\frac{B^2D(2P-D)}{A^2(P^2+B^2)}
\right),\qquad D=\frac d y.\notag
\end{align}
Both identities follow by factoring the polynomial $D$ in
\eqref{prof:eq:Dsym} and using
$C(a,b)-C(p,b)=(2\pi)^{-1}\log\bigl((1+b^2/a^2)/(1+b^2/p^2)\bigr)$.

The first identity follows by integrating $J_a'$ over the two vertical source
segments. Its denominator factors as
\[
 D=(A+(b+z)^2)(A+(b-z)^2).
\]
Differentiating the logarithm gives \eqref{prof:eq:Va} and \eqref{prof:eq:Vy}.
The source-height derivative is
\begin{equation}\label{prof:eq:Vb-app}
 V_b(a,y,b)=\frac1{4\pi y}
        \log\frac{a^2+(y+b)^2}{a^2+(y-b)^2}.
\end{equation}
Subtracting $C_b(p,b)=b/(\pi(p^2+b^2))$ for the two reflected branches
yields \eqref{prof:eq:Fb}.

\subsection{The slope polynomials}\label{prof:app:bernstein}

On a barrier panel $[a,b]$, write $\rho(T)=\alpha+\beta T$.  Let
\[
 W=P_5'Q_5-P_5Q_5'.
\]
The centre-slope numerator is
\[
 S_0=2\bigl((1-T)P_5Q_5+T^2W\bigr).
\]
The barrier numerators are
\begin{equation}\label{prof:eq:Ssigma}
 S_\sigma=S_0+2\sigma\bigl(\alpha+(\beta-\alpha)T\bigr)Q_5^2,
 \qquad \sigma\in\{-1,+1\}.
\end{equation}
Every coefficient of \eqref{prof:eq:Ssigma} is rational.  If $\gamma_j$ are its Bernstein coefficients on $[a,b]$ and $\delta_j$ those of $Q_5^2$, then
\[
 \frac{S_\sigma(T)}{Q_5(T)^2}
 \ge\frac{\min_j\gamma_j}{\max_j\delta_j}.
\]
For the sufficient bound used here it is preferable to certify the
polynomial difference $S_\sigma-0.04Q_5^2>0$ directly on a finite rational
subdivision.  This avoids any interpretation of a lower Bernstein
certificate as an exact minimum of the rational function.  The exact rational coefficients for all 56 signed height and slope
checks are included in the accompanying data.

\section*{Acknowledgements}
During the preparation of this work the authors used ChatGPT (OpenAI) as an
auxiliary tool for exposition, code development and consistency checks in the
computer-assisted part of the proof.  All mathematical arguments,
computational certificates and the final manuscript were reviewed by the
authors, who take full responsibility for the contents. 

\section*{Acknowledgements}
During the preparation of this work the authors used ChatGPT (OpenAI) as an
auxiliary tool for exposition, code development and consistency checks in the
computer-assisted part of the proof.  All mathematical arguments,
computational certificates and the final manuscript were reviewed by the
authors, who take full responsibility for the contents.  

The complete computational archive, including sources, exact input data, interval outputs and reproduction scripts, will be made publicly available on the authors' webpage.

The research of M.~del Pino is supported by the Royal Society Research
Professorship grant RP-R1-180114 and by the ERC/UKRI Horizon Europe grant
ASYMEVOL, EP/Z000394/1. The research of J.~Wei is partially supported by
GRF of RGC of Hong Kong entitled ``On critical and supercritical Fujita
equation.

\end{document}